\definecolor{purple}{rgb}{0.59, 0.44, 0.84}
\definecolor{orange}{rgb}{1, 0.6, 0.4}
\newtheorem{theorem}{Theorem}[section]
\newtheorem{lemma}[theorem]{Lemma}
\newtheorem{corollary}[theorem]{Corollary}
\newtheorem{proposition}[theorem]{Proposition}
\newtheorem{defn}{Definition}[section]
\newtheorem{cor}{Corollary}[section]
\newcounter{example}
\newcounter{remark}
\newenvironment{remark}[1][]{\refstepcounter{remark}\par\medskip
   \noindent \textbf{Remark~\theremark. #1} \rmfamily}{\medskip}
\numberwithin{equation}{section}
\numberwithin{example}{section}
\numberwithin{remark}{section}
\newcommand{\ba}{\backslash}
\def \l {\lambda}
\def\P{\mathbb P}
\def\C{\mathbb C}
\def\G{\mathbf G}
\def\wp{\mathfrak{p}}
\def\Tr{{\rm Tr}}
\def\Gal{{\rm Gal}}
\def\Frob{{\rm Fr}}
\def\F{{\mathbb F}}
\def\Z{{\mathbb Z}}
\def\BK{\mathbb{K}}
\def\G{\Gamma}
\def\N{\mathbb N}
\def\CC#1#2{\binom {#1}{#2}}
\def\Q{\mathbb{Q}}
\def \ol{\overline}
\def \eps{\varepsilon}
\newcommand{\fp}{\mathbb{F}_p}
\def \g {\mathfrak{g}}
\newcommand{\fq}{\mathbb{F}_q}
\newcommand*\HYPERskip{&}
\newcommand*\pFq{
\begingroup
\catcode`\,\active
\def ,{\HYPERskip}%
\doHyper
}
\def\doHyper#1#2#3#4#5{%
\, _{#1}F_{#2}\left[\begin{matrix}#3 \smallskip \\  #4\end{matrix} \; ; \; #5\right]%
\endgroup
}
\newenvironment{singnumalign}{
    \begin{equation}
    \begin{aligned}
}{
    \end{aligned}
    \end{equation}
    \ignorespacesafterend
}
\newcommand*\HYPERp{&}
\newcommand*\pPq{
	\begingroup
	\catcode`\,\active
	\def ,{\HYPERp}%
	\doHyperP
}
\def\doHyperP#1#2#3#4#5{%
	\, _{#1}{P}_{#2}\left[\begin{matrix}#3 \smallskip \\  #4\end{matrix} \; ; \; #5\right]%
	\endgroup
}
\newcommand*\HYPERpp{&}
\newcommand*\pPPq{
\begingroup
\catcode`\,\active
\def ,{\HYPERpp}%
\doHyperFpp
}
\def\doHyperFpp#1#2#3#4#5{%
\, _{#1}{\mathbb P}_{#2}\left[\begin{matrix}#3 \smallskip \\  #4\end{matrix} \; ; \; #5\right]%
\endgroup
}
\def\N{\mathbb N}
\def \HD{\mathit{HD}}
\def \ba{\boldsymbol{\alpha}}
\def \bbeta{\boldsymbol{\beta}}
\begin{document}

\title{The explicit hypergeometric-modularity   method II}

\author{Michael Allen, Brian Grove, Ling Long, Fang-Ting Tu}
\begin{abstract}
    In the first paper of this sequence \cite{HMM1} we provided an explicit hypergeometric modularity method by combining different techniques from the classical, $p$-adic, and finite field settings. In this article, we explore an application of this method from a motivic viewpoint through some known hypergeometric well--poised formulae of Whipple and McCarthy. We first use the method to derive a class of special weight three modular forms, labeled as $\BK_2$-functions. Then using well-poised hypergeometric formulae we further construct a class of degree four Galois representations of the absolute Galois groups of the corresponding cyclotomic fields. These representations are then shown to be extendable to $G_\Q$ and the $L$-function of each extension coincides with the $L$-function of an automorphic form. 
\end{abstract}
\keywords{Hypergeometric functions, modular forms, supercongruence, Galois representations, character sums, $L$-values, well-poised formula}
\maketitle
\tableofcontents

\section{Introduction} \label{sec:intro}
Properties of  $L$-series from automorphic forms or Galois representations have been one of the central interests in number theory. Geometric $L$-functions are expected to be automorphic which implies their analytic continuation to the whole complex plane.  In particular, there is a class of finite dimensional Galois representations arising from a hypergeometric background that can be studied very explicitly and effectively. These $L$-functions can be viewed as $L$-functions of exponential sums. In this sequence of papers, we explore arithmetic objects with both modular forms and hypergeometric origins. 
In \cite{RRV22}, Roberts and Rodriguez Villegas gave a general description of hypergeometric motives which are governed by hypergeometric data and their associated combinatorics, which was used in papers including \cite{DPVZ,Long-Yang} and others. These hypergeometric motives give a uniform perspective on three 
mutually enriching aspects: the classical complex aspect \cite{AAR}, the $p$-adic aspect initiated by Dwork \cite{Dwork}, and the finite field or Galois aspect pioneered by Greene \cite{Greene} and Katz \cite{KatzESDE}.
\begin{center}
    \includegraphics[height=3.5cm]{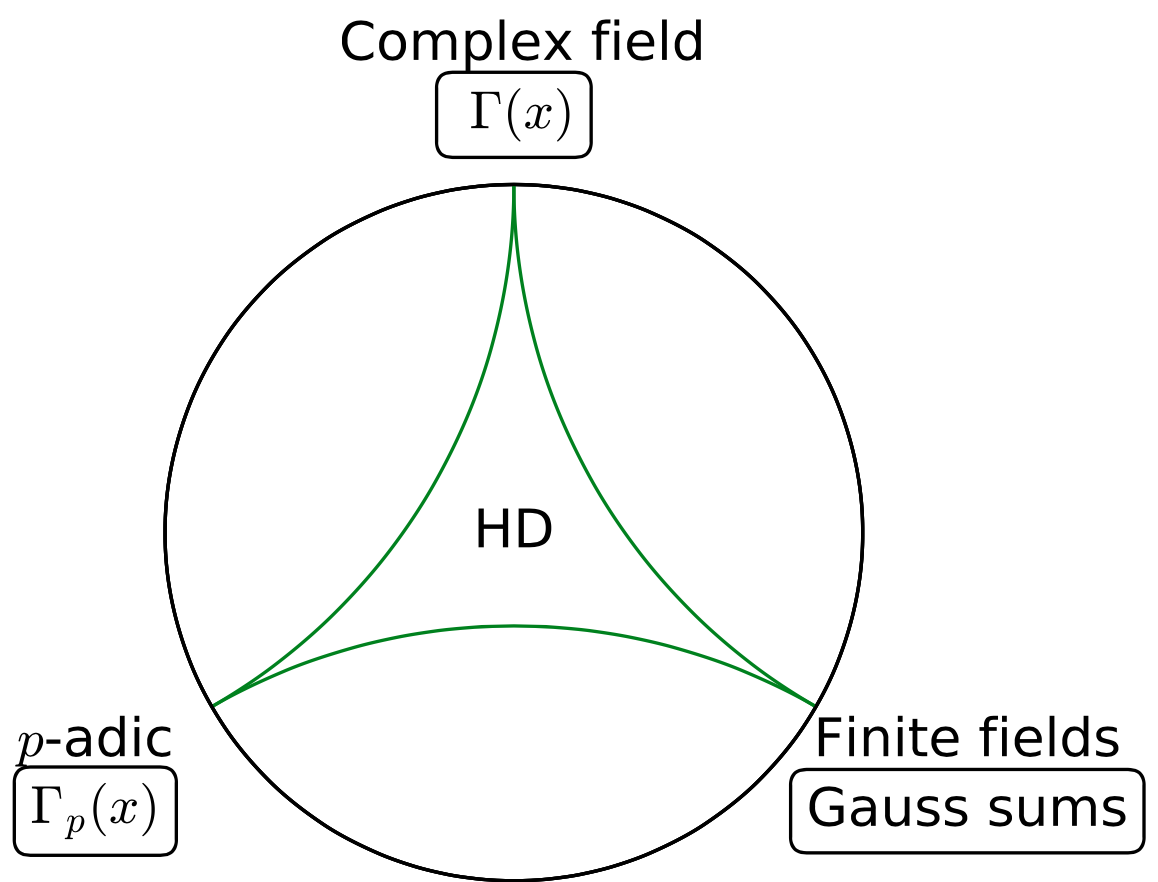}
\end{center}
In this current series, we first proposed the ``Explicit Hypergeometric-Modularity Method" (EHMM) in \cite{HMM1} which provides a direct link between hypergeometric character sums and coefficients of Hecke eigenforms. The method is based on the Euler integral formula for classical hypergeometric functions, Ramanujan's theory of elliptic functions to alternative bases, Dwork's unit root theory, and the Gross--Koblitz formula connecting the finite field and $p$-adic settings.

Here in Part II of this series, we first use the EHMM to construct a class of special weight three cusp forms, called the $\BK_2$-functions. Then we use them and well-poised hypergeometric formulae due to Whipple over $\C$ and McCarthy over finite fields to prove the main results.

We will recall some basic notation. A \emph{hypergeometric} datum $\HD=\left\{\ba, \bbeta\right\}$, or written as $\left\{\substack{\ba\\ \bbeta}\right\}$, where $\ba = \left\{r_1, r_2, \hdots, r_n\right\}$ and $\bbeta = \left\{q_1, q_2, \hdots, q_n\right\}$ are two multisets of rational numbers. Here we assume $q_1=1$.
Let $F(\ba,\bbeta;\l)=\pFq{n}{n-1}{r_1& r_2& \hdots& r_n}{&q_2&\cdots& q_n}\l$. When $q_{1}=1$ for $_nF_{n-1}$, it will be omitted following the classical notation.  As a function of $\l$, $F(\ba,\bbeta;\l)$ is annihilated by an order-$n$ ordinary differential equation. Solutions of this equation (referred to as \emph{periods}) form a rank-$n$ local system.  Combinations of the solutions and  derivatives are referred to as \emph{quasi-periods}.

In \cite{HMM1}, we focus on length three or four data at $\l=1$. Here, based on the EHMM method, we consider $\lambda=-1$ and length four hypergeometric data as follows: for $1\le j\le 11$,  we let 
\begin{equation}\label{eq:D-M-defn}
    D=24/\gcd(j,24),\quad M=\mathrm{lcm}(4,D).
\end{equation} 
The integer $M$ is chosen so that our theorems can be stated cleanly.  Let
\begin{equation}\label{eq:HD4-defn}
    \HD_4\left(\frac{j}{12}\right):=\left\{\begin{array}{cccc}\frac{j}{12}&\frac{j}{12}&\frac12&\frac12\\ 1&1&\frac12 + \frac{j}{12}&\frac12+\frac{j}{12}\end{array}\right\}
\end{equation}
\begin{equation}\label{eq:HD5-defn}
    \HD_5\left(\frac{j}{12}\right):=\left\{\begin{array}{ccccc}\frac{j}{12}&1+\frac{j}{24}&\frac{j}{12}&\frac12&\frac12\\ 1&\frac{j}{24}&1&\frac12 + \frac{j}{12}&\frac12+\frac{j}{12}\end{array}\right\}.
\end{equation}
{These data are \emph{well-poised} which means  $r_i+q_i$ is independent of $i$ and the second one,  obtained by inserting a second column with $r_2=1+r_1/2$ to $\HD_4(r)$, is called \emph{very well-poised}. Very well-poised series has its own interest, see for example \cite{Zudilin-well-poised-Euler} by Zudilin.} The first datum is \emph{primitive}, namely $r_i-q_j\notin \Z$ for each $i,j$; while the second one is not. Whipple in \cite{Whipple25} studied the consequential symmetries at $\l=\pm 1$.  Here we combine the EHMM and Whipple's results to study the splitting of the local system associated with $\HD_4(j/12)$ at $\l=-1$.  The splitting behavior leads us to two families of Hecke modular forms. For brevity,  we describe one weight three family using the Dedekind eta function 
\[
    \eta(\tau)=\eta(q)=q^{1/24}\prod_{n\ge 1} (1-q^n), \text{ where } q=e^{2\pi i \tau}, \tau \in \C, \textnormal{Im} (\tau)>0,
\]
\begin{equation}\label{eq:E}
    \BK_2(r,s)(\tau):= 
    \frac{\eta \left(\frac{\tau}{2} \right)^{16s-8r-12}\eta(2 \tau)^{8s+8r-12}}{\eta(\tau)^{24s-30}}.
\end{equation}
For a modular form $f$, let $L(f,k)$ denote  the $L$-value of $f$ at $k$ (cf. \eqref{eq:L(k)k=1,2} ).
\begin{theorem}\label{thm:mainclassical} For $1\le j\le 11$,
    let $(r_1,q_1)=(1-\frac{j}{24}, 1+\frac{j}{24})$ and $ (r_2,q_2)=( \frac{12-j}{24}, \frac{12+j}{24}).$ Then there exist a holomorphic congruence modular form $K(\frac{j}{12})(\tau)$ and a non-holomorphic one $E(\frac{j}{12})(\tau)$ (see \eqref{eq:K&E}) both of weight two such that
    \begin{equation*} 
    \begin{split}
        2^{2-j/2}\Omega_{j,\C} \cdot F\left(\HD_4\left(j/{12}\right);-1\right)&= \,\, {L}\left(K\left(j/12\right)(\tau),1\right)\cdot { L}\left(\BK_2\left(r_1,q_1\right)(\tau),1\right),\\
        2^{2-j/2}\Omega_{j,\C} \cdot F \left(\HD_5\left(j/{12}\right);-1\right) &= -{L}\left(E\left(j/{12}\right)(\tau),1\right)\cdot { L}\left(\BK_2\left(r_2,q_2\right)(\tau),1\right),
    \end{split}
    \end{equation*}
    where $\displaystyle \Omega_{j,\C}=\frac{\pi }{\sin(j\pi/12)}\frac{\G(j/12)^2}{\G(j/12+1/2)^2}$ and  $\G(\cdot)$ is the usual $\G$ function. We also have the following Legendre relation  (\cite[\S1.6]{BB} or \cite{Silverman2, Waldschmidt}): $$L\left(K\left({j}/{12}\right)(\tau),1\right)\cdot L\left(E\left({j}/{12}\right)(\tau),1\right)=-\frac{3\cdot 2^{5-\frac j2}}{j\sin(\pi j/12)} \pi.$$ 
\end{theorem}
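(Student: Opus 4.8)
The plan is to read both displayed identities as the period realization of a splitting of the rank-four local system attached to $\HD_4(j/12)$ (respectively to the very-well-poised $\HD_5(j/12)$) at the distinguished point $\lambda=-1$, and then to feed each rank-two factor into the EHMM. \emph{First} I would invoke Whipple's classical transformations for (very) well-poised series at $\lambda=\pm1$ from \cite{Whipple25}: applied to the primitive well-poised datum $\HD_4(j/12)$ these express $F(\HD_4(j/12);-1)$, up to an explicit quotient of $\G$-functions, as a product of two ${}_2F_1$-type periods --- one ``first-kind'' elliptic period whose hypergeometric data is that of the weight-two object, and one period whose data matches, after the appropriate quadratic change of variable, the $\BK_2$-datum isolated earlier in the paper. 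The parallel very-well-poised identity applied to $\HD_5(j/12)$ yields the companion decomposition in which the elliptic factor is now the ``second-kind'' (quasi-)period; this is the structural source both of the non-holomorphic weight-two form $E(j/12)$ and of the overall minus sign in the second formula.

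\emph{Next} I would apply the EHMM of \cite{HMM1} to the two rank-two factors separately. The Euler-integral and Ramanujan-alternative-bases part of the method identifies the elliptic factor with a period of $K(j/12)$ (resp. of its non-holomorphic companion $E(j/12)$), while the construction of the $\BK_2$-functions --- already realized as the eta-quotient \eqref{eq:E} --- identifies the remaining factor with a period of $\BK_2(r_1,q_1)$ (resp. $\BK_2(r_2,q_2)$). Matching each hypergeometric period with the corresponding critical value via \eqref{eq:L(k)k=1,2} converts the factors into $L(K(j/12),1)$, $L(E(j/12),1)$, $L(\BK_2(r_i,q_i),1)$. It then remains to assemble the scalar: the constant $\Omega_{j,\C}=\frac{\pi}{\sin(j\pi/12)}\frac{\G(j/12)^2}{\G(j/12+1/2)^2}$ comes from the $\G$-quotient delivered by Whipple's formula together with the reflection formula $\G(x)\G(1-x)=\pi/\sin(\pi x)$ and the period-normalization constants built into the EHMM, and the power $2^{2-j/2}$ comes from the quadratic argument change relating $\lambda=-1$ to the modular parameter. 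I would carry out this tracking symbolically in $j$ and then corroborate it numerically for each $1\le j\le 11$.

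\emph{For the Legendre relation}, I would use that $K(j/12)$ and $E(j/12)$ are respectively a period and a quasi-period of the rank-two elliptic sublocal system, so that their Wronskian obeys a Legendre-type identity in the sense of Ramanujan's alternative-base theory (\cite[\S1.6]{BB}). The point $\lambda=-1$ is fixed by the involution $\lambda\mapsto1/\lambda$, and at this point the complementary period degenerates, so the Wronskian determinant collapses to the single product $L(K(j/12),1)\,L(E(j/12),1)$; comparing with the normalizations already fixed above produces the stated value $-\frac{3\cdot 2^{5-j/2}}{j\sin(\pi j/12)}\pi$. (Equivalently, one can combine the two product formulas with a Whipple-type relation linking $F(\HD_4(j/12);-1)$, $F(\HD_5(j/12);-1)$ and a product of $\BK_2$-$L$-values.)

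\emph{The main obstacle} I expect is not any single conceptual step but the normalization bookkeeping: matching the integral period lattice emerging from Whipple's splitting with the Hecke-eigenform period lattice through the classical/$p$-adic/finite-field dictionary of the EHMM, so that $\Omega_{j,\C}$, the $2$-powers, and the signs come out exactly. A secondary delicate point is to certify that the non-elliptic factor in Whipple's splitting is exactly the period of the eta-quotient $\BK_2(r_i,q_i)$ and not a quadratic twist or an oldform thereof --- this is precisely where the explicit weight-three identification carried out in the first part of the paper is needed.
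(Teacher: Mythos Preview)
Your overall strategy---Whipple's well-poised formulae followed by identification of the resulting periods with $L$-values---is the same as the paper's. But there is a concrete misconception in how you describe Whipple's output that would derail the execution. Whipple's formula \eqref{eq:Whipple-4F3&companion} does \emph{not} express $F(\HD_4(r);-1)$ as a product of two ${}_2F_1$-type periods. It expresses it, up to the $\Gamma$-quotient $C(r_i)$, as a \emph{single} ${}_3F_2(1)$ value. After converting to $P$-notation and simplifying the $\Gamma$-factors (using the Beta-product identity \Cref{lem:BetaProduct}), one obtains
\[
P(\HD_4(r);-1)=2^{-2r}\,B\!\left(\tfrac{1-r}{2},r\right)\cdot {}_3P_2\!\left[\begin{smallmatrix}\frac12&\frac12&1-\frac r2\\ &1&1+\frac r2\end{smallmatrix};1\right].
\]
The second factor is the weight-\emph{three} period, identified with $L(\BK_2(1-\tfrac r2,1+\tfrac r2),1)$ by \Cref{lem:K-Lvalue}; there is no quadratic change of variable here. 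The ``elliptic'' factor is not a ${}_2F_1$ value at all but the bare Beta function $B(\tfrac{1-r}{2},r)$, which is identified with $L(K(r),1)$ via the closed formula \eqref{eq:K1-L} for $L(\BK_1(r,s),1)$. The same mechanism handles $\HD_5$: Whipple gives a single ${}_3F_2(1)$, and the remaining Beta function $B(-\tfrac r2,r)$ yields $L(E(r),1)$ together with the minus sign. The power $2^{2-j/2}$ comes entirely from the $\Gamma$-bookkeeping (via $C(r)$, \Cref{lem:BetaProduct}, and the $2^{4r-1}$ in \Cref{lem:K-Lvalue}), not from any argument change.

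For the Legendre relation, the paper's route is far more direct than the Wronskian argument you sketch: since both $L(K(r),1)$ and $L(E(r),1)$ are explicit Beta functions by \eqref{eq:K1-L}, their product is evaluated in closed form by the elementary identity of \Cref{lem:BetaProduct}, namely $B(1-\tfrac r2,r)B(\tfrac{1-r}{2},r)=2^{2r}\pi/(r\sin\pi r)$. No local-system or Wronskian argument is needed.
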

The functions  $\BK_2(r_i,q_i)$ and  $K(\frac{j}{12})(\tau)$ above have multiplicative coefficients.
\begin{theorem}\label{thm:L-Values}
    A suitable  combination of $\BK_2(r_i,q_i)$  (resp. $K(\frac{j}{12})(\tau)$) and their  images under the Hecke operators gives rise to a normalized modular form $f_{3,D}^\sharp$ (resp. $f_{2,D}^\sharp$), depending on $D$, which is an eigenform for all Hecke operators $T_p$ with $p \geq 5$. 
\end{theorem}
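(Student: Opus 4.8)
The plan is to realize the $\eta$-quotients $\BK_2(r_i,q_i)$ and $K(j/12)$ as explicit holomorphic modular forms in a fixed finite-dimensional space, decompose that space into Hecke eigenspaces for the operators $T_p$ with $p\ge5$, observe that each of these $\eta$-quotients is — up to dilations $\tau\mapsto d\tau$ with $d$ supported on $\{2,3\}$ — concentrated on a single newform constituent, and then recover that newform from the $\eta$-quotients and their Hecke translates by a $\Q$-rational linear combination. Normalized so that its leading Fourier coefficient is $1$, that common newform will be $f^\sharp_{w,D}$ (with $w=3$, resp.\ $w=2$).

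First I would pin down the ambient space. After the substitution $\tau\mapsto2\tau$ — which merely replaces $q$ by $q^2$ and so preserves every $T_p$-eigenvalue relation for odd $p$ — each $\BK_2(r_i,q_i)$ and each $K(j/12)$ becomes an honest product $\prod_\delta\eta(\delta\tau)^{r_\delta}$ with $\delta$ running over divisors of an integer $N$ whose only prime factors are $2$ and $3$. Ligozat's criteria — the two congruences $\sum_\delta\delta\,r_\delta\equiv0$ and $\sum_\delta(N/\delta)\,r_\delta\equiv0\pmod{24}$, integrality of the order of vanishing at every cusp of $\Gamma_0(N)$, and the nebentypus formula — then identify it, once for each of the finitely many $D\in\{3,4,6,8,12,24\}$, as a holomorphic modular form of weight $3$, resp.\ $2$ (allowing an Eisenstein component in weight two), on $\Gamma_1(N)$. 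Since $N$ is supported on $\{2,3\}$, the condition $p\ge5$ is exactly $p\nmid N$, so $T_p$ acts on these spaces with the usual formalism and multiplicity one.

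Next I would decompose and identify. By Atkin--Lehner--Li theory the ambient space is a direct sum of dilations $h(d\tau)$ of newforms $h$ of level dividing $N$. Computing enough Fourier coefficients — consistent with, and guided by, the multiplicativity recorded in the remark preceding the theorem — one finds that each normalized $\BK_2(r_i,q_i)$, resp.\ $K(j/12)$, lies in the oldspace generated by a \emph{single} newform $g$ (possibly an Eisenstein eigenform when $w=2$), i.e.\ equals $\sum_{d\mid6}c_d\,g(d\tau)$ for explicit $c_d$; and comparing $q$-expansions up to the Sturm bound for $\Gamma_1(N)$ shows that one and the same newform $g=:f^\sharp_{w,D}$ arises for every $j$ with $\gcd(j,24)=24/D$. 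The EHMM of Part~I supplies the conceptual reason this is the right newform: for each $p\ge5$ it computes the Frobenius trace on the split piece of the local system of $\HD_4(j/12)$, resp.\ $\HD_5(j/12)$, at $\lambda=-1$ as a hypergeometric character sum over $\F_p$, and these are precisely the Hecke eigenvalues $a_p(f^\sharp_{w,D})$, so $f^\sharp_{w,D}$ is the constituent entering Theorem~\ref{thm:mainclassical}; the constituents are expected to have complex multiplication by small imaginary quadratic fields, which keeps the finite comparison short.

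The last step is the combination itself. From $\BK_2(r_i,q_i)=\sum_{d\mid6}c_d\,g(d\tau)$ one recovers $g=f^\sharp_{w,D}$ by applying $T_2$ and $T_3$ to both sides and solving the resulting linear system over $\Q$ — equivalently, by applying the Hecke projector $\prod_\ell(T_p-\mu_\ell)/(a_p(f^\sharp_{w,D})-\mu_\ell)$ onto the $f^\sharp_{w,D}$-isotypic line for any fixed $p\ge5$, which by multiplicity one is independent of $p$ — and then rescaling so the leading Fourier coefficient equals $1$. The outcome is the asserted normalized form $f^\sharp_{w,D}$: a $T_p$-eigenform for all $p\ge5$, exhibited as a $\Q$-linear combination of the $\BK_2(r_i,q_i)$, resp.\ the $K(j/12)$, and their images under the Hecke operators. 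I expect the crux to be the uniformity and non-vanishing in the previous step: one must verify that the distinguished newform genuinely occurs, with nonzero multiplicity, in \emph{every} $\BK_2(r_i,q_i)$ and \emph{every} $K(j/12)$ sharing a given $D$, so the projector does not annihilate it, and one must keep careful track of the Euler factors at the bad primes $2$ and $3$ — which is exactly why the eigenform property is asserted only for $p\ge5$. Neither point seems to follow from a purely structural argument; both rely on the explicit $\eta$-quotient coefficient calculations.
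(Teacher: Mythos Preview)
Your proposal outlines a self-contained modular-forms argument (Ligozat for the ambient space, Atkin--Lehner--Li decomposition, Sturm-bound matching, Hecke projectors), whereas the paper's ``proof'' of this theorem is essentially an exhibition: it invokes the EHMM machinery already set up in Part~I (restated here as Theorem~\ref{thm:main}) together with the Galois-orbit formalism of \S\ref{ss:K2-Galois}, and then simply \emph{lists} the explicit linear combinations in Tables~\ref{tab:G2-eigenforms} and~\ref{tab:f2D}. Crucially, those combinations are not Hecke translates of a single $\BK_2(r_i,q_i)$ but linear combinations of \emph{several} $\BK_2(r_c,s_c)$ (resp.\ $\BK_1$) as $(r_c,s_c)$ runs over the Galois orbit $O(D)$; the coefficients $h(r,s)$ are determined by Corollary~3.1 of Part~I and by Rosen's computations. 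So the two routes differ in where the work is done: yours front-loads standard newform theory and does a case-by-case projection; the paper front-loads the hypergeometric input (which already forces the $\BK_2$ in a Galois orbit to share $T_p$-eigenvalues for $p\equiv1\pmod M$) and then reads off the eigenform from the orbit.

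One factual slip in your write-up: the assertion that each $\BK_2(r_i,q_i)$ lies in the oldspace of a \emph{single} newform is false in general. For $D=6$ (and likewise $D=12$) Table~\ref{tab:G2-eigenforms} shows that $\BK_2(1/6,5/6)(12\tau)$ is a combination of oldforms attached to \emph{two} distinct newforms $f_{36.3.d.a}$ and $f_{36.3.d.b}$, which are $\chi_3$-twists of one another and have different $T_p$-eigenvalues at primes $p\equiv2\pmod3$. This does not break your projector argument---you just project onto one of the two isotypic lines and accept the non-uniqueness already flagged in the paper---but the sentence ``one finds that each \ldots\ lies in the oldspace generated by a single newform'' should be weakened accordingly.
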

\begin{table}[ht]
    \centering
    \begin{tblr}{colspec={|Q[c,m]|Q[c,m]|Q[c,m]|Q[c,m]|Q[c,m]|Q[c,m]|Q[c,m]|Q[c,m]},} \hline
        $j$ & $D(j)$ & $M(j)$ &  $f_{2, D}^{\sharp}$&CM & $f_{3,D}^\sharp$&CM \\  \hline
        $1, 5, 7, 11$ & $24$ & $24$ & $f_{1152.2.d.g}$& $\Q(\sqrt{-6})$  & $f_{1152.3.b.i}$ &No\\  \hline
        $2, 10$ & $12$ & $12$ & $f_{288.2.a.a}$& $\Q(\sqrt{-1})$ & $f_{288.3.g.a}, f_{288.3.g.c}$&No \\  \hline
        $3, 9$ & $8$ & $8$ & $f_{128.2.b.a}$& $\Q(\sqrt{-2})$ & $f_{128.3.d.c}$&No \\  \hline
        $4$ & $6$ & $12$ & $f_{36.2.a.a}$& $\Q(\sqrt{-3})$ & $f_{36.3.d.a}, f_{36.3.d.b}$&  $\Q(\sqrt{-1})$ \\  \hline
        $6$ & $4$ & $4$ & $f_{32.2.a.a}$& $\Q(\sqrt{-1})$ & $f_{32.3.c.a}$ &No\\ \hline
        $8$ & $3$ & $12$ &  $f_{36.2.a.a}$& $\Q(\sqrt{-3})$ & $f_{36.3.d.a}, f_{36.3.d.b}$&  $\Q(\sqrt{-1})$ \\  \hline
    \end{tblr}
    \caption{The cuspforms  in LMFDB labels \cite{LMFDB} with CM information}
    \label{tab:main}
\end{table}
In particular, $f_{2,D}^\sharp$ is a CM modular form, see \Cref{tab:f2D}. More information on $f_{2,D}^{\sharp}$ will be given in \Cref{thm:mainpadic} when we consider the $p$-adic aspect. In Dwork's theory, one considers the truncation $F(\ba,\bbeta;\l)_{p-1}$ of  $F(\ba,\bbeta;\l)$ after the $p^{th}$ term.  We use the $p$-adic Gamma functions $\G_p$, discussed further in \Cref{sec:padic} below.  For a normalized Hecke eigenform $f$, we use $a_n(f)$ to denote its $n^{th}$ coefficient and  $\rho_f$ for the corresponding Deligne representation of the absolute Galois group of $\Q$.  In particular, because $f_{2,D}^\sharp$ is CM, $\rho_{f_{2,D}^\sharp}$ decomposes as the sum of two characters when restricted to $G(M)$ for $G(M) \colonequals \mathrm{Gal}(\overline{\Q}/\Q(\zeta_M))$, with $\zeta_M$ a primitive $M^{th}$ root of unity.
\begin{theorem} \label{thm:mainpadic}
    Let $1\le j\le 6$ and $D$ and $M$ be defined as in \eqref{eq:D-M-defn}. For any prime $p\equiv 1\pmod {M}$, there exist $\left(a_p(f_{2,D}^\sharp)\right)_0$ and $\left(a_p(f_{2,D}^\sharp)\right)_1$   in $\overline \Q\cap \overline \Q_p$  of $p$-adic orders 0 and 1, respectively, such that
    \[
        a_p(f_{2,D}^\sharp)=\left (a_p(f_{2,D}^\sharp)\right)_0+\left (a_p(f_{2,D}^\sharp)\right)_1, \quad \text{and } \left (a_p(f_{2,D}^\sharp)\right)_0\cdot\left (a_p(f_{2,D}^\sharp)\right)_1=p.
    \]
    Moreover, if we let $\displaystyle \Omega_{j,\Q_p}=\frac{\G_p(j/12)^2}{\G_p(j/12+1/2)^2,}$ then  
    \begin{align}
        \Omega_{j,\Q_p}F(\HD_4(j/12);-1)_{p-1}&\equiv 
        \left (a_p(f_{2,D}^\sharp)\right)_0 a_p(f_{3,D}^\sharp) \pmod{p^2}; \label{eq:length-4-super} \\
        \Omega_{j,\Q_p}F(\HD_5(j/12);-1)_{p-1}&\equiv
        \left (a_p(f_{2,D}^\sharp)\right)_1a_p(f_{3,D}^\sharp)\pmod{p^2}. \label{eq:length-5-super}
    \end{align}
\end{theorem}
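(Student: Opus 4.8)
The plan is to combine three inputs: the CM structure of $f_{2,D}^\sharp$, McCarthy's finite-field well-poised hypergeometric formulae (the finite-field counterpart of the Whipple identities underlying \Cref{thm:mainclassical}), and Dwork's unit-root theory for hypergeometric crystals together with the Gross--Koblitz formula. First I would record the splitting of $a_p(f_{2,D}^\sharp)$. By the choice \eqref{eq:D-M-defn}, $M$ is divisible by the modulus of the CM Hecke character $\psi$ of $f_{2,D}^\sharp$ (\Cref{tab:main}), so $p\equiv 1\pmod M$ forces $p$ to split in the imaginary quadratic field $K$ of CM; writing $p\O_K=\wp\overline\wp$ gives $a_p(f_{2,D}^\sharp)=\psi(\wp)+\psi(\overline\wp)$. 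After fixing an embedding $\overline\Q\hookrightarrow\overline\Q_p$, one of these primes, say $\wp$, is the one through which it factors; set $(a_p(f_{2,D}^\sharp))_0:=\psi(\wp)$ and $(a_p(f_{2,D}^\sharp))_1:=\psi(\overline\wp)$. Since $\psi$ has infinity type $z\mapsto z$, this gives $\ord_p\psi(\wp)=0$, $\ord_p\psi(\overline\wp)=1$, and $\psi(\wp)\psi(\overline\wp)=\psi(p\O_K)=p$ (the finite part of $\psi$ being trivial at $p$, as $p\equiv 1\pmod M$). This is the ordinary factorization of the $p$-Euler factor of $f_{2,D}^\sharp$.

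Next I would establish the finite-field identity. Feeding the data \eqref{eq:HD4-defn} and \eqref{eq:HD5-defn} at $\l=-1$ into McCarthy's finite-field well-poised transformation, and using the weight-two and weight-three modularity from \Cref{thm:L-Values} and \Cref{thm:mainclassical}, I would identify the hypergeometric motive attached to $\HD_4(j/12)$ at $-1$, restricted to $G(M)$, with $\rho_{f_{2,D}^\sharp}\otimes\rho_{f_{3,D}^\sharp}$, with its holomorphic period lying in the rank-two summand $\psi^{(0)}\otimes\rho_{f_{3,D}^\sharp}$; here $\psi^{(0)},\psi^{(1)}$ are the two characters into which $\rho_{f_{2,D}^\sharp}|_{G(M)}$ decomposes, normalized so that $\psi^{(0)}(\mathrm{Fr}_\wp)=\psi(\wp)$ and $\psi^{(1)}(\mathrm{Fr}_\wp)=\psi(\overline\wp)$. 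The very-well-poised datum $\HD_5(j/12)$ at $-1$ is designed, via the same Whipple/McCarthy symmetry, to land instead on the complementary summand $\psi^{(1)}\otimes\rho_{f_{3,D}^\sharp}$. The Jacobi-sum prefactor produced by McCarthy's transformation is turned by Gross--Koblitz into exactly $\Omega_{j,\Q_p}=\G_p(j/12)^2/\G_p(j/12+1/2)^2$.

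Then I would pass from the finite field to the truncated series. By Dwork's unit-root theorem, in the explicit hypergeometric guise of \cite{HMM1}, $\Omega_{j,\Q_p}\,F(\HD_4(j/12);-1)_{p-1}$ is, modulo $p^2$, the Frobenius eigenvalue on the line through which the holomorphic period flows (a clean statement here because $(-1)^p=-1$, so the relevant crystal is already defined over $\Z_p$). By the previous step that eigenvalue is $\psi(\wp)\gamma_1$, where $\gamma_1,\gamma_2$ are the Frobenius eigenvalues of $\rho_{f_{3,D}^\sharp}$ at $p$ with $\gamma_1\gamma_2=p^2$; since $f_{3,D}^\sharp$ is ordinary at $p$ (for $j=4$ this is forced by its CM by $\Q(\sqrt{-1})$, which splits at $p$ as $4\mid M$; for $j=1,2,3,5,6$ it must be shown separately), we have $\ord_p\gamma_2=2$, so $\psi(\wp)\gamma_1\equiv\psi(\wp)(\gamma_1+\gamma_2)=(a_p(f_{2,D}^\sharp))_0\,a_p(f_{3,D}^\sharp)\pmod{p^2}$, which is \eqref{eq:length-4-super}. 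The series $F(\HD_5(j/12);-1)_{p-1}$ is handled identically, except that its period lies in $\psi^{(1)}\otimes\rho_{f_{3,D}^\sharp}$, with relevant eigenvalue $\psi(\overline\wp)\gamma_1$ of $p$-adic order $1$ (consistent with $F(\HD_5(j/12);-1)_{p-1}\equiv 0\pmod p$); as $\ord_p(\psi(\overline\wp)\gamma_2)=3$, this gives $\psi(\overline\wp)\gamma_1\equiv(a_p(f_{2,D}^\sharp))_1\,a_p(f_{3,D}^\sharp)\pmod{p^2}$, i.e.\ \eqref{eq:length-5-super}.

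The main obstacle is making the last two steps precise to accuracy $p^2$. On one hand, one must show the $\HD_4$- and $\HD_5$-motives at $\l=-1$ are ordinary at \emph{every} prime $p\equiv 1\pmod M$ --- equivalently that $f_{3,D}^\sharp$ is ordinary at all such $p$, which in the non-CM cases ($j=1,2,3,5,6$) does not follow formally and must be extracted from the explicit eta-quotient description \eqref{eq:E} or from a further symmetry of the family --- and, in the $\HD_5$ case, extend the unit-root argument to a slope-one rather than slope-zero eigenvalue, which is where the imprimitivity of \eqref{eq:HD5-defn} enters. On the other hand, one must match the Jacobi-sum prefactor of McCarthy's transformation with $\Omega_{j,\Q_p}$ and pin down every sign, so that $\HD_5$ genuinely lands on $\psi^{(1)}$ and not $\psi^{(0)}$; this is the same sign responsible for the minus sign in the second identity of \Cref{thm:mainclassical} and has to be compatible with the Legendre relation recorded there. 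Carrying McCarthy's well-poised transformation through for the non-generic data \eqref{eq:HD4-defn} and \eqref{eq:HD5-defn} (repeated parameters, and $\HD_5$ imprimitive) is where most of the technical weight will sit.
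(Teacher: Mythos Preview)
Your splitting of $a_p(f_{2,D}^\sharp)$ via the CM Hecke character is correct and matches what the paper does in Lemma~\ref{lem:f2D} and Corollary~\ref{cor:ap(f2)decomposition}, where the two summands are realized explicitly as Jacobi sums and then rewritten in $\G_p$ via Gross--Koblitz.

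For the supercongruences themselves, however, your route diverges from the paper's and contains a genuine gap. You want Dwork's unit-root theory to produce $\Omega_{j,\Q_p}F(\HD_4(j/12);-1)_{p-1}\equiv \psi(\wp)\gamma_1\pmod{p^2}$ and then to replace $\gamma_1$ by $\gamma_1+\gamma_2=a_p(f_{3,D}^\sharp)$ using $\ord_p\gamma_2=2$. That last replacement requires $f_{3,D}^\sharp$ to be ordinary at \emph{every} prime $p\equiv 1\pmod M$. You flag this, but for the non-CM cases $j\in\{1,2,3,5,6\}$ this is not something one can read off the eta-product: ordinarity of a fixed non-CM eigenform at all primes in an arithmetic progression is, to my knowledge, open. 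The same issue recurs in your $\HD_5$ argument. So the proposal does not close as written.

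The paper avoids ordinarity entirely by a different mechanism, the \emph{$p$-adic perturbation method}. Rather than lifting McCarthy's finite-field identity, it applies the \emph{classical} Whipple formulae \eqref{eq:Whipple-4F3&companion} with parameters shifted by multiples of $p$ (taking $r_3=(1+p)/2$, $r_4=(1-p)/2$) so that both sides terminate at $(p-1)/2$. Averaging over the sign of the perturbation (Proposition~\ref{prop:general-perturbation}, built on the $\G_p$ Taylor expansion of \Cref{thm:LR}) kills the $p$-linear corrections modulo $p^2$ and recovers the unperturbed truncated series. A further perturbed Kummer relation (Lemma~\ref{lem:key1}) moves the resulting ${}_3F_2(1)_{p-1}$ into the range where Theorem~2.3 of \cite{HMM1} gives ${}_3F_2(1)_{p-1}\equiv H_p(\cdots;1)\pmod{p^2}$ directly; the $H_p$ value is then identified with $a_p(f_{3,D}^\sharp)$ via \eqref{eq:4.11} (plus one finite-field Kummer step). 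At no point is $a_p(f_{3,D}^\sharp)$ decomposed into Frobenius eigenvalues, so ordinarity never enters. The $\HD_5$ case (Proposition~\ref{prop:super2}) is handled by the same perturbation applied to the second Whipple identity, and the factor $(a_p(f_{2,D}^\sharp))_1=p/(a_p(f_{2,D}^\sharp))_0$ emerges from a bare factor of $p$ in the $\Gamma$-quotient \eqref{eq:Whipple-Gamma-simplified}, not from a slope-one crystalline analysis.
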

For the $\ell$-adic aspect, we let $H_q(\HD;\l;\wp)$ be the finite field hypergeometric function (see \eqref{eq:P-H}).
\begin{theorem} \label{thm:mainGalois}
    Let $1\le j\le 11$,  $D$ and $M$ be defined as in \eqref{eq:D-M-defn}, and  $f_{2,D}^\sharp$  and $f_{3,D}^\sharp$ be  as in \Cref{thm:L-Values}, where $\rho_{f_{2,D}^\sharp} |_{G(M)}\simeq\chi_{D,1}\oplus \chi_{D,2}.$ For each prime ideal $\wp$ of $\Z[\zeta_M]$ above $p\equiv 1\pmod M$, we have
    \begin{equation}\label{eq:point-counts}
         \Omega_{j,\F_p} H_p\left(\HD_4(j/12);-1;\wp\right)=a_p(f_{2,D}^\sharp)a_p(f_{3,D}^\sharp),  
    \end{equation} 
    where $\displaystyle \Omega_{j,\F_p}=p\frac{\g(\iota_\wp(j/12))^2}{\g(\iota_\wp(j/12+1/2))^2}$, $\g(\cdot)$ denotes Gauss sum, and $\iota_\wp$ is given by \eqref{eqn:residue symbol}.

   Equivalently, if we let $\rho_{\{\HD_4(j/12);-1\}}$ be an explicit hypergeometric representation of $G(M)$ (see \Cref{thm:Katz}) then $\rho_{\{\HD_4(j/12);-1\}}$ is automorphic. In particular,
    \[
        \rho_{\{\HD_4(j/12);-1\}}\simeq \left(\rho_{f_{2,D}^\sharp}\otimes \rho_{f_{3,D}^\sharp}\right)|_{G(M)}=\left[\left(\chi_{D,1}\oplus\chi_{D,2}\right)\otimes \rho_{f_{3,D}^\sharp} \right]|_{G(M)}.
    \]
\end{theorem}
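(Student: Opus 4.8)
The plan is to reduce the Galois-theoretic assertion to the numerical identity \eqref{eq:point-counts}, and to prove the latter by carrying the classical well-poised factorization behind \Cref{thm:mainclassical} into the finite-field setting via McCarthy's analogue of Whipple's formula. First, by \Cref{thm:Katz} the representation $\rho_{\{\HD_4(j/12);-1\}}$ of $G(M)$ is normalized (via the Gauss-sum twist that produces the factor $\Omega_{j,\F_p}$) so that $\Tr\big(\Frob_\wp \mid \rho_{\{\HD_4(j/12);-1\}}\big)=\Omega_{j,\F_p}\,H_p(\HD_4(j/12);-1;\wp)$ for every prime ideal $\wp\mid p$ with $p\equiv 1\pmod M$; since such $p$ split completely in $\Q(\zeta_M)$, one has $\Frob_\wp\mapsto\Frob_p$ and hence $\Tr\big(\Frob_\wp\mid(\rho_{f_{2,D}^\sharp}\otimes\rho_{f_{3,D}^\sharp})|_{G(M)}\big)=a_p(f_{2,D}^\sharp)\,a_p(f_{3,D}^\sharp)$. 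Both representations being semisimple (Katz's theory for the hypergeometric side; a tensor product of Deligne representations of cuspforms on the other), the Chebotarev density theorem shows the isomorphism $\rho_{\{\HD_4(j/12);-1\}}\simeq(\rho_{f_{2,D}^\sharp}\otimes\rho_{f_{3,D}^\sharp})|_{G(M)}$ is \emph{equivalent} to \eqref{eq:point-counts}. The further identity $(\rho_{f_{2,D}^\sharp}\otimes\rho_{f_{3,D}^\sharp})|_{G(M)}=[(\chi_{D,1}\oplus\chi_{D,2})\otimes\rho_{f_{3,D}^\sharp}]|_{G(M)}$ is immediate since $f_{2,D}^\sharp$ is CM (\Cref{tab:main}), and automorphy of the right-hand side is Rankin--Selberg for $f_{2,D}^\sharp\times f_{3,D}^\sharp$ (equivalently, automorphic induction from the relevant imaginary quadratic field), so the ``equivalently'' clause reduces to \eqref{eq:point-counts}.

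To prove \eqref{eq:point-counts} I would apply McCarthy's finite-field well-poised transformation to $H_p(\HD_4(j/12);-1;\wp)$. The hypothesis $p\equiv 1\pmod M$ ensures that all the multiplicative characters $\iota_\wp(k/D)$, $\iota_\wp(k/M)$ attached to the datum exist over $\F_p$ and lie in the general position required by McCarthy's formula; moreover $\HD_4(j/12)$ is primitive, so $\lambda=-1$ is a smooth point of the associated local system and no degeneracy arises. The transformation then expresses $H_p(\HD_4(j/12);-1;\wp)$ as an explicit product of Gauss sums times a shorter finite-field hypergeometric value — the exact mirror of the factorization $F(\HD_4(j/12);-1)=(\text{Gamma factors})\cdot F_1\cdot F_2$ of Whipple that drives \Cref{thm:mainclassical}, and the finite-field lift of the mod-$p^2$ congruence \eqref{eq:length-4-super}. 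Using Hasse--Davenport together with the Gauss-sum reflection and multiplication formulae, one checks that the accumulated Gauss-sum constants collapse precisely to $\Omega_{j,\F_p}=p\,\g(\iota_\wp(j/12))^2/\g(\iota_\wp(j/12+1/2))^2$.

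It then remains to identify the two surviving factors. The ``weight two'' factor is a product of Jacobi sums in characters of order dividing $D$; since $\rho_{f_{2,D}^\sharp}|_{G(M)}\simeq\chi_{D,1}\oplus\chi_{D,2}$ with the $\chi_{D,i}$ the explicit algebraic Hecke characters of the CM field (one of $\Q(\sqrt{-1}),\Q(\sqrt{-2}),\Q(\sqrt{-3}),\Q(\sqrt{-6})$), a classical Gauss/Jacobi-sum evaluation shows this factor equals $\chi_{D,1}(\Frob_\wp)+\chi_{D,2}(\Frob_\wp)=a_p(f_{2,D}^\sharp)$. The ``weight three'' factor is exactly the finite-field hypergeometric avatar of the $\BK_2$-function produced by the EHMM, so by the construction of $f_{3,D}^\sharp$ in \Cref{thm:L-Values} — the Hecke recombination passing from $\BK_2(r_i,q_i)$ to $f_{3,D}^\sharp$ being precisely the one that makes the Frobenius eigenvalues multiplicative for $p\geq 5$ — this factor equals $a_p(f_{3,D}^\sharp)$. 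Multiplying the two identifications yields \eqref{eq:point-counts}, and the first paragraph then upgrades it to the stated isomorphism and automorphy.

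I expect the main obstacle to be the middle step: carrying out McCarthy's well-poised reduction cleanly and uniformly over all $1\le j\le 11$ and all $\wp\mid p$ with $p\equiv 1\pmod M$, and bookkeeping every Gauss-sum normalization so that it telescopes exactly into $\Omega_{j,\F_p}$ rather than into $\Omega_{j,\F_p}$ times a stray root of unity or power of $p$. A subtle point is the contrast with the $p$-adic \Cref{thm:mainpadic}: modulo $p^2$ only the unit-root part $(a_p(f_{2,D}^\sharp))_0$ appears, whereas the exact finite-field identity must reproduce the full $a_p(f_{2,D}^\sharp)=(a_p(f_{2,D}^\sharp))_0+(a_p(f_{2,D}^\sharp))_1$, so one has to confirm that the non-unit Jacobi-sum contribution is genuinely present in $H_p(\HD_4(j/12);-1;\wp)$ and is merely invisible after truncation. (The non-primitive datum $\HD_5(j/12)$, where McCarthy's formula behaves differently, is not needed for this theorem and enters only through the classical and $p$-adic companions \Cref{thm:mainclassical} and \Cref{thm:mainpadic}.)
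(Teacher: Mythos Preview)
Your approach is correct and is essentially the paper's: reduce to \eqref{eq:point-counts}, apply McCarthy's finite-field well-poised formula, and identify the resulting pieces with $a_p(f_{2,D}^\sharp)$ and $a_p(f_{3,D}^\sharp)$ via the CM decomposition and the EHMM respectively.

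One point of imprecision worth flagging, since it is exactly where the computation could go wrong: McCarthy's formula \eqref{eq:McCarthy-4F3} does \emph{not} output a single ``shorter hypergeometric value'' times Gauss sums (mirroring a product $F_1\cdot F_2$), but rather a \emph{sum} of two $H_q$-values of type $\HD_3$, one for each choice $s\in\{r/2,(r+1)/2\}$. The factorization into $a_p(f_{2,D}^\sharp)\cdot a_p(f_{3,D}^\sharp)$ works because \emph{both} length-three summands, after conversion to $\P$-functions, carry the \emph{same} factor $a_p(f_{3,D}^\sharp)$ --- they lie in the same G2 Hecke orbit, so \eqref{eq:4.11} applies to each --- which then pulls out, leaving a \emph{sum} (not product) of two Jacobi sums that \Cref{lem:f2D} identifies with $a_p(f_{2,D}^\sharp)=\chi_{D,1}(\Frob_\wp)+\chi_{D,2}(\Frob_\wp)$. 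The Gauss-sum bookkeeping you worry about is handled by the Jacobi-sum product identity \Cref{lem:JacProduct} (the finite-field analogue of \Cref{lem:BetaProduct}), which is what makes the two Jacobi-sum denominators from the $\P\!\leftrightarrow\! H$ conversion flip into the numerators needed for \Cref{lem:f2D} and collapses the constants exactly to $\Omega_{j,\F_p}$.
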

\Cref{fig:1} illustrates the underlying geometry.  Namely, this decomposition arises from the action of an involution $\mathfrak{i}$ \eqref{eq:iota-map} on the representation space of $\rho_{\{\HD_4(j/12);-1\}}$. The existence of this involution is due to the fact that $\HD_4(j/12)$ is well-poised.
\begin{figure}[ht]
    \centering
    \includegraphics[scale=0.75]{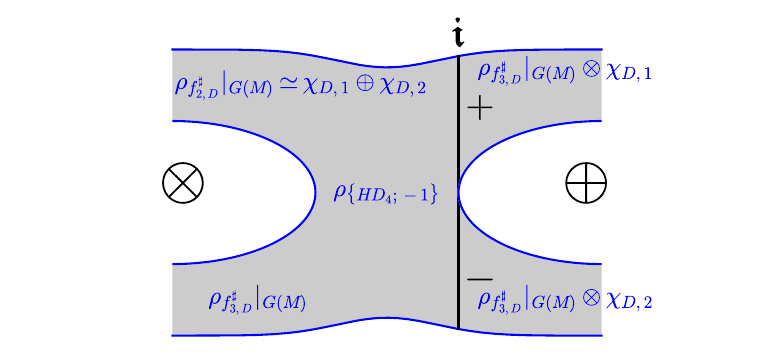}
    \captionsetup{justification=centering}
    \caption{Decomposition of the representation.}
    \label{fig:1}
\end{figure}

The choices of $f_{2,D}^\sharp$  and $f_{3,D}^\sharp$ , or the choices of $K(j/12)$  and $\BK_2(r_i,q_i)$  in \Cref{thm:mainclassical} are not unique,  see the appendix for more discussion.  

One can realize the corresponding hypergeometric representation from the third \'etale cohomology group of the   smooth model of the following three-fold
\begin{equation} \label{eq:hgv}
y^{12} = x_{1}^6 x_2^6 x_{3}^{12-j}(1-x_1)^{12-j}  (1-x_2)^{12-j}(1-x_3)^j(1+ x_{1} x_2x_3)^{j},
\end{equation} when $\gcd(j,12)=1$, otherwise adjusting the exponents by dividing $D$ will give the corresponding model. The well-poised assumption on $\HD_4(j/12)$ at $\l=-1$ can be realized through the following involution acting on \eqref{eq:hgv}:
\begin{equation}\label{eq:iota-map}
        {\mathfrak {i}}:\begin{cases}
            x_i&\mapsto 1/x_i, \quad  i=1,2,3\\
            y&\mapsto (-1)^{j/12} y/(x_1x_2x_3)^2. 
        \end{cases}
    \end{equation}
Equivalently, over finite fields of cardinality $q \equiv 1 \pmod{M}$ the values in \eqref{eq:point-counts} correspond to point counts of this variety by \cite[Proposition 4.2]{Win3X}.

Note the $j=6$ case in \Cref{thm:mainGalois} was obtained  in \cite{McCarthy-Papanikolas} by  McCarthy--Papanikolas.  Below we summarize a few conclusions for this case.
\begin{proposition}\label{prop:otheraspects}
    When $j=6$, $\HD_4(1/2)=\left\{\substack{\frac12,\frac12,\frac12,\frac12\\ 1,1,1,1}\right\}$. $f_{3,6}^\sharp=f_{32.3.c.a}$ and $f_{2,6}^\sharp=f_{32.2.a.a}$.
    \begin{itemize}   
        \item Over $\C$:
        \begin{equation}\label{eq:untruncated_version}
            \begin{split}
                \pFq{4}{3}{\frac{1}{2}&\frac{1}{2}&\frac{1}{2}&\frac{1}{2}}{&1&1&1}{-1} &= \frac{32}{\pi^{2}}L(f_{32.2.a.a},1)\cdot \mid \mathrm{Im}(L(f_{32.3.c.a},1))\mid,\\
                \pFq{5}{4}{\frac{1}{2}&\frac54&\frac{1}{2}&\frac{1}{2}&\frac{1}{2}}{&\frac14&1&1&1}{-1} &= \frac{2}{\pi L(f_{32.2.a.a},1)}\cdot  \mathrm{Re}(L(f_{32.3.c.a},1)), 
            \end{split}
        \end{equation}
        where $L(f_{32.2.a.a},1) = \frac 18 \frac{\Gamma\left(\frac{1}{4}\right)\Gamma\left(\frac12\right)}{\Gamma\left(\frac34\right)};$ and
        \begin{multline}\label{eq:L(32.3.c.a,1}
            L(f_{32.3.c.a},1)= \frac{\G(\frac12)}8\left(\frac{\G(\frac14)}{\G(\frac34)}\pFq32{\frac12&\frac12&{ \frac14}}{&1&{\frac34}}{1} +4\sqrt{-1} \frac{\G(\frac34)}{\G(\frac14)} \pFq32{\frac12&\frac12&{\frac34}}{&1&{\frac54}}{1}\right).
        \end{multline}
        \item Over $\F_p$: For each prime $p>3$,
        \begin{equation}\label{eq:FF_version}
            H_{p}\left[\begin{matrix} \frac{1}{2}&\frac1{2}&\frac{1}{2}&\frac{1}{2} \smallskip \\  1&1&1&1 \end{matrix} \; ; \; -1 \right] =a_p(f_{32.2.a.a}) a_{p}(f_{32.3.c.a}).
        \end{equation}
        In this case, the L-function  $L(\HD_4(1/2);-1;s)$
        \footnote{It can be computed a \texttt{Magma} package implemented by Watkins \cite{Watkins-HGM-documentation} (cf. also \cite{Cohen-Compute-L} by H. Cohen).} is the Rankin--Selberg convolution of $L(f_{32.2.a.a},s)$ and $L(f_{32.3.c.a},s)$.
        \item The $p$-adic version: For each prime $p\equiv 1\pmod 4,$ 
        \begin{equation}
            a_p(f_{32.2.a.a})=\left(a_p(f_{32.2.a.a})\right)_0+p/\left(a_p(f_{32.2.a.a})\right)_0,\, \text{and } \left(a_p(f_{32.2.a.a})\right)_0= \frac{\G_p(\frac14)\G_p(\frac12)}{\G_p(\frac34)}.
        \end{equation} 
        Then
        \begin{singnumalign}\label{eq:truncated_version}
            \pFq{4}{3}{\frac{1}{2}&\frac{1}{2}&\frac{1}{2}&\frac{1}{2}}{&1&1&1}{-1}_{p-1} &\equiv p \cdot \pFq{3}{2}{\frac{1}{2}&\frac{1}{2}&\frac{3}{4}}{&1&\frac{5}{4}}{1}_{p-1} \\ 
            &\equiv \left(a_p(f_{32.2.a.a})\right)_0 a_p(f_{32.3.c.a})\pmod{p^2}.  
        \end{singnumalign}
        \begin{singnumalign}\label{eq:truncated_version2}
            \pFq{5}{4}{\frac{1}{2}&\frac54&\frac{1}{2}&\frac{1}{2}&\frac{1}{2}}{&\frac14&1&1&1}{-1}_{p-1}  &\equiv p \cdot \pFq{3}{2}{\frac{1}{2}&\frac{1}{2}&\frac{1}{4}}{&1&\frac{3}{4}}{1}_{p-1}\\
            &\equiv \frac{p}{\left(a_p(f_{32.2.a.a})\right)_0}a_p(f_{32.3.c.a}) \pmod{p^2}.   
        \end{singnumalign}
    \end{itemize}
\end{proposition}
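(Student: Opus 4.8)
The plan is to obtain \Cref{prop:otheraspects} as the $j=6$ specialization of \Cref{thm:mainclassical,thm:mainpadic,thm:mainGalois}, cross-checked against the finite-field evaluation of McCarthy--Papanikolas, after three routine simplifications: of the hypergeometric data \eqref{eq:HD4-defn}--\eqref{eq:HD5-defn}, of the normalizers $\Omega_{6,\C}$, $\Omega_{6,\Q_p}$, $\Omega_{6,\F_p}$, and of the building blocks $\BK_2(r_i,q_i)$, $K(1/2)$, $E(1/2)$ into the newforms of \Cref{tab:main}. First I would record that \eqref{eq:D-M-defn} gives $D=24/\gcd(6,24)=4$ and $M=\mathrm{lcm}(4,4)=4$, so the condition $p\equiv1\pmod M$ on $\wp$ is just $p\equiv1\pmod4$, that \eqref{eq:HD4-defn}--\eqref{eq:HD5-defn} collapse to the two data displayed, and that by definition $f_{2,6}^\sharp=f_{2,4}^\sharp$ and $f_{3,6}^\sharp=f_{3,4}^\sharp$, which \Cref{tab:main} identifies with $f_{32.2.a.a}$ and $f_{32.3.c.a}$; in particular $f_{2,4}^\sharp$ has CM by $\Q(\sqrt{-1})$, so $\rho_{f_{2,4}^\sharp}|_{G(4)}=\chi_{4,1}\oplus\chi_{4,2}$ as in \Cref{thm:mainGalois}. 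For the constants: $\Omega_{6,\C}=\pi\,\Gamma(1/2)^2/\Gamma(1)^2=\pi^2$ and $2^{2-j/2}=\tfrac12$; $\Omega_{6,\Q_p}=\Gamma_p(1/2)^2/\Gamma_p(1)^2=\Gamma_p(1/2)^2$ (as $\Gamma_p(1)=-1$); and $\Omega_{6,\F_p}=p\,\g(\phi)^2/\g(\varepsilon)^2=\phi(-1)p^2$, where $\phi=\iota_\wp(1/2)$ is the quadratic character and $\varepsilon$ the trivial one, using $\g(\varepsilon)=-1$ and $\g(\phi)^2=\phi(-1)p$ (so $\Omega_{6,\F_p}=p^2$ since $p\equiv1\pmod4$).

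With this dictionary in hand, the complex identities \eqref{eq:untruncated_version} follow by putting $j=6$ in \Cref{thm:mainclassical} and rewriting $L(K(1/2),1)$, $L(\BK_2(\tfrac34,\tfrac54),1)$ and $L(\BK_2(\tfrac14,\tfrac34),1)$ in terms of $L(f_{32.2.a.a},1)$ and $L(f_{32.3.c.a},1)$ through the ``suitable combination'' of \Cref{thm:L-Values}; the $\mathrm{Im}$ and $\mathrm{Re}$ parts of $L(f_{32.3.c.a},1)$ appear because $f_{32.3.c.a}$ has a quadratic nebentypus and the two parameter choices $(r_1,q_1),(r_2,q_2)$ at $j=6$ give a form and its Fricke companion, exactly the bookkeeping of the appendix. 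The closed forms $L(f_{32.2.a.a},1)=\tfrac18\Gamma(\tfrac14)\Gamma(\tfrac12)/\Gamma(\tfrac34)$ and \eqref{eq:L(32.3.c.a,1} then come by feeding the classical Whipple transformations --- which express the well-poised ${}_4F_3[\cdots;-1]$ and the very-well-poised ${}_5F_4[\cdots;-1]$ as combinations of ${}_3F_2[\cdots;1]$'s --- back through \Cref{thm:mainclassical} and its Legendre relation (which at $j=6$ reads $L(K(1/2),1)L(E(1/2),1)=-2\pi$). The finite-field identity \eqref{eq:FF_version} is \eqref{eq:point-counts} at $j=6$ after the evaluation of $\Omega_{6,\F_p}$ above and the change of normalization between $H_q(\HD;\l;\wp)$ in \eqref{eq:point-counts} and the finite-field ${}_4F_3$-bracket in \eqref{eq:FF_version}; for $p\equiv3\pmod4$ both sides vanish, the right one since $a_p(f_{32.2.a.a})=0$ by CM and the left one since the quadratic character is then odd and $H_p$ with all-$\tfrac12$ numerator at $\l=-1$ has the matching symmetry --- this being the McCarthy--Papanikolas computation cited above. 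The Rankin--Selberg assertion then follows from $\rho_{\{\HD_4(1/2);-1\}}\simeq(\rho_{f_{2,4}^\sharp}\otimes\rho_{f_{3,4}^\sharp})|_{G(4)}$ in \Cref{thm:mainGalois}, upgraded to an equality of $L$-functions over $\Q$ by matching Euler factors at every $p>3$ via \eqref{eq:FF_version} and checking the bad factor at $2$ against the conductor of the hypergeometric motive attached to \eqref{eq:hgv}.

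Finally, the $p$-adic statements come from \Cref{thm:mainpadic} at $j=6$: \eqref{eq:length-4-super} and \eqref{eq:length-5-super} become \eqref{eq:truncated_version} and \eqref{eq:truncated_version2} after (i) identifying the unit root $(a_p(f_{32.2.a.a}))_0$ with $\Gamma_p(\tfrac14)\Gamma_p(\tfrac12)/\Gamma_p(\tfrac34)$ --- the Gross--Koblitz expression for the $p$-unit among the roots of $T^2-a_p(f_{32.2.a.a})T+p$ attached to the CM Hecke character of $\Q(\sqrt{-1})$, reconciled with $\Omega_{6,\Q_p}=\Gamma_p(\tfrac12)^2$ via the reflection formula $\Gamma_p(x)\Gamma_p(1-x)=\pm1$ --- and (ii) proving the middle congruences in \eqref{eq:truncated_version}--\eqref{eq:truncated_version2}, which relate the truncated ${}_4F_3[\cdots;-1]_{p-1}$ and ${}_5F_4[\cdots;-1]_{p-1}$ to $p$ times the truncated ${}_3F_2[\cdots;1]_{p-1}$ and which are the mod-$p^2$ truncations of the same Whipple transformation used classically, verified by matching the two Dwork-type unit-root expansions coefficientwise. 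The step I expect to be the main obstacle is exactly this last paragraph together with the $\mathrm{Re}/\mathrm{Im}$ normalization of the complex case: keeping every root of unity and power of $2$ exact (not merely up to sign) through the Gross--Koblitz and Fricke/Atkin--Lehner comparisons, and pinning the correct branch and truncation of Whipple's transformation so that the congruences hold modulo $p^2$ rather than only modulo $p$.
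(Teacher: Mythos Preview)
Your plan is correct and matches the paper's treatment: \Cref{prop:otheraspects} is stated in the introduction as the $j=6$ summary of \Cref{thm:mainclassical,thm:mainpadic,thm:mainGalois} and is not given a separate proof; the finite-field extension to all $p>3$ is exactly \Cref{thm:mainGalois} for $p\equiv 1\pmod 4$ together with the CM vanishing $a_p(f_{32.2.a.a})=0$ for $p\equiv 3\pmod 4$ (this is the content of the first line of the proof of the corollary following \Cref{thm:mainGalois}, and is the McCarthy--Papanikolas result already cited).

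One small comment on method. For the intermediate congruences in \eqref{eq:truncated_version}--\eqref{eq:truncated_version2} you propose to ``match Dwork-type unit-root expansions coefficientwise.'' The paper instead (in \Cref{sec:padic}) perturbs the parameters in Whipple's identity \eqref{eq:Whipple-4F3&companion} by $\pm p$ so that both sides terminate, rewrites everything via \eqref{eq:poch-to-p-gamma}--\eqref{eq:gamma-to-p-gamma}, and then averages over the two sign choices using \Cref{prop:general-perturbation} to kill the linear-in-$p$ error and land exactly on the unperturbed truncated series modulo $p^2$; a further Kummer step (\Cref{lem:key1}) moves the resulting $_3F_2$ into the range where the supercongruence of Part I applies. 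This is precisely the ``pinning the correct truncation of Whipple's transformation'' that you flag as the obstacle, so your instinct is right, but the mechanism is the perturbation--and--average lemma rather than a unit-root comparison. Since these congruences are part of the proof of \Cref{thm:mainpadic} itself, for \Cref{prop:otheraspects} you may simply invoke \eqref{eq:length-4-super}--\eqref{eq:length-5-super} at $j=6$; just be careful that $\Omega_{6,\Q_p}=\Gamma_p(1/2)^2=(-1)^{(p+1)/2}$ and that $\omega_p(4)^{(p-1)/2}=1$, so the reflection-formula bookkeeping you mention is genuinely needed to make the unit-root formula $\Gamma_p(1/4)\Gamma_p(1/2)/\Gamma_p(3/4)$ come out with the correct sign.
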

This paper is organized as follows. We first provide some background in \Cref{s:prelim}, then  in \Cref{sec:complexAspect} we recall the EHMM method and use it to construct the $\BK_2$-functions and derive their properties. Well-poised formulae will be recalled and used in \Cref{sec:4dimGalois}. Then we prove our main results over $\C$ and $\F_q$. The $p$-adic aspect is discussed in \Cref{sec:padic}. In the appendix, we use the hypergeometric values to explore special  $L$-values of the $\BK_2$-functions and some of their quadratic twists.  
\subsection{Acknowledgements}The authors would like to express their sincere gratitude to Frits Beukers, Henri Darmon, Wen-Ching Winnie Li, Tong Liu, Dermot McCarthy, Ken Ono, Esme Rosen, Hasan Saad, Armin Straub, John Voight, Pengcheng Zhang, and Wadim Zudilin for their discussions and suggestions. Special thanks to Esme Rosen who computed the forms in \Cref{tab:main} and \Cref{tab:f2D} and shared with us many symmetries of the $\BK_1$ and $\BK_2$ functions.

Grove was partially supported by a summer research assistantship from the Department of Mathematics at Louisiana State University (LSU). Long is supported in part by the Simons Foundation grant  \#MP-TSM-00002492 and the LSU Michael F. and Roberta Nesbit McDonald Professorship; Tu is supported by the NSF grant DMS \#2302531.
\section{Notation and Hypergeometric Functions}\label{s:prelim}
\subsection{Notation and classical theory}\label{ss:2.1.1}
Here we briefly recall the necessary notation and definitions from Part I of this series.  For a more thorough introduction to hypergeometric functions and character sums in our setting, we refer the reader to \cite{HMM1}.  We use $\HD$ to denote a \emph{hypergeometric datum} $\left\{\ba, \bbeta\right\}$ or $\left\{\substack{\ba\\ \bbeta}\right\}$, where $\ba = \left\{r_1, r_2, \hdots, r_n\right\}$ and $\bbeta = \left\{q_1, q_2, \hdots, q_n\right\}$ are two multisets of rational numbers.  We say that $\HD$ is \emph{primitive} if $r_i - q_j \notin \Z$ for all $1 \leq i, j\leq n$, and that $\HD$ is \emph{defined over $\Q$} if both
\[
    \prod_{j=1}^n x-e^{2\pi i r_j} \quad \text{and} \quad \prod_{j=1}^n x-e^{2\pi i q_j}
\]
have integer coefficients.  We refer to $n$ as the \emph{length} of $\HD$.

One useful aspect of classical hypergeometric functions is the integral representation of Euler. When $\text{Re}(q_{i})>\text{Re}(r_{i})>0$, this gives an inductive formula to construct hypergeometric functions, see for example \cite[(2.2.2)]{AAR}.  In the following, we assume that $q_1 = 1$. First, define $_{1}P_{0}[r_{1};z]\colonequals (1-z)^{-r_{1}}$. Then for $n \geq 2$ define the (classical) period function as
\begin{multline}\label{eq:inductive}
    \pPq{n}{n-1}{\, \, r_{1}&r_{2}&\cdots&r_{n}}{&q_{2}&\cdots&q_{n}&}{z}=\\
    \int_{0}^{1} t^{r_{n}}(1-t)^{q_{n}-r_{n}-1} \pPq{n-1}{n-2}{r_{1}&r_{2}&\cdots&r_{n-1}}{&q_{2}&\cdots&q_{n-1}}{zt}\frac{dt}{t}.
\end{multline} 
The classical $_{n}F_{n-1}$ function is related to $_{n}P_{n-1}$ by $_{1}F_{0}[r_{1};z]:=\, _{1}P_{0}[r_{1};z]$, and
\begin{equation}\label{eq:FtoP}
    \begin{split}
        &\pFq{n}{n-1}{r_{1}&r_{2}&\cdots&r_{n}}{&q_{2}&\cdots&q_{n}}{z}= \prod_{i=2}^{n} \frac{\Gamma(q_{i})}{\Gamma(r_{i})\Gamma(q_{i}-r_{i})} \cdot \pPq{n}{n-1}{r_{1}&r_{2}&\cdots&r_{n}}{&q_{2}&\cdots&q_{n}}{z},
    \end{split} 
\end{equation}
For convenience, in this series of papers, we adopt the convention that 
\begin{equation}\label{eq:shorthand-quotient}
    f\left(\frac{a_{1}^{r_{1}},\cdots, a_{m}^{r_{m}}}{b_{1}^{s_{1}},\cdots,b_{n}^{s_{n}}}\right)\colonequals\frac{f(a_{1})^{r_{1}}\cdots f(a_{m})^{r_{m}}}{f(b_{1})^{s_{1}}\cdots f(b_{n})^{s_{n}}}
\end{equation}
where $f$ could be the Gamma function $\G(\cdot)$, the Pochhammer symbol $(\cdot)_n$, the $p$-adic Gamma function $\G_p(\cdot)$, or the Gauss sum $\g(\cdot)$ when there is no ambiguity. In a like manner, $f\left({a_{1}^{r_{1}},\cdots, a_{m}^{r_{m}}}\right)$ is used to denote $f(a_1)^{r_{1}}\cdots f(a_m)^{r_{m}}$.  Moreover, for a fixed multiset $\ba = \left\{r_1, \hdots, r_n\right\}$,  $f(\ba):=f(r_1, r_2, \hdots, r_n)$.
\subsection{Some background over the finite fields}\label{ss:3F_p}
Let $\F_q$ be a finite field of odd characteristic and $\widehat{\F_q^\times} = \langle \omega \rangle$ be the set of multiplicative characters for $\F_q^\times$. We take the convention that $A(0) = 0$ for each $A\in \widehat{\F_q^\times}$. Let $\eps$ be the trivial character, $\phi$ be the quadratic character, and $\overline A$ be the inverse of $A$. 

Further, the following shorthand notations for Gauss and Jacobi sums are used. Let $a,b \in \Q$ such that $a(q-1),b(q-1)\in\Z$ then
\[
    \g_{\omega}(a) := \sum_{x \in \fq} \omega^{(q-1)a}{(x)}\zeta_{p}^{\Tr_{\fp}^{\mathbb{F}_{q}}(x)}
\]
and
\[
    J_{\omega}(a,b) := \sum_{x \in \fq} \omega^{(q-1)a}{(x)}\omega^{(q-1)b}(1-x),
\]
where $\Tr_{\fp}^{\mathbb{F}_{q}}$ is the standard trace map from $\fq$ to $\fp$. 
\subsubsection{Hypergeometric character sums}\label{ss:2.1.4}
In \cite{Win3X}, a finite character sum $_{n}\mathbb{P}_{n-1}$ is defined inductively, parallel to \eqref{eq:inductive}. We now describe how to pass from $\Z[\zeta_M]$ to  finite fields. For each nonzero prime ideal $\wp$ coprime to $M$ and integer $i$ we assign a character to the finite residue field $\kappa_\wp\colonequals\Z[\zeta_M]/\wp$ using the $M^{th}$ residue symbol by 
\begin{equation}\label{eqn:residue symbol}
    \iota_\wp\left(\frac iM\right)(x) \colonequals \left( \frac x \wp\right)_M^i\equiv x^{(q-1)\frac{i}M} \pmod {\wp}, \quad { \forall x\in \Z[\zeta_M]. }
\end{equation} 
For example, if $\wp$ is coprime to $2$, $\iota_\wp(1/2)=\phi_\wp$ or simply $\phi$, the quadratic character. Likewise, $\iota_\wp(1)$ is the trivial character, denoted by $\eps_\wp$ or $\eps$. By definition $\iota_\wp(i/M)(0)=0$ for each $i$. In this notation,  we  denote Gauss sums by $\g(\iota_\wp(r_i))$ and Jacobi sums by $J(\iota_\wp(r_i),\iota_\wp(r_j)).$ For simplicity, we write $R_i$ for $\iota_\wp(r_i)$, $\overline{R}_i$ for $\iota_\wp(-r_i)$, and define $Q_i$ and $\overline{Q}_i$ analogously and use $q$ for $|\kappa_\wp|$. We refer the reader to \cite{Berndt-Evans-Williams} or \cite{Win3X} for further background on Gauss and Jacobi sums.  We now replace our classical hypergeometric functions with character sums using the dictionary first used by Greene \cite{Greene} and described explicitly in \cite[\S2.4]{Win3X}. The first key adjustment is to replace $t^{r_{i}}$ with $R_{i}(t)$, where $t \in \fq$. Now the Pochhammer symbols in the classical setting are quotients of gamma functions. The gamma functions are then replaced with Gauss sums. Let $R_i,Q_i,q$ as above.  For a fixed $\l \in \F_q^\times$ and $n \geq 1$, define 
\begin{multline}\label{eq:PP}
    \P(\ba,\bbeta;\l;\wp)=  \pPPq{n}{n-1}{R_1& R_2&\cdots &R_{n}}{ \, Q_1&  Q_2&\cdots &Q_{n}}{\l;q}\\
    \colonequals \frac{(-1)^{n}}{q-1}\cdot \left(\prod_{i=2}^{n} R_iQ_i(-1) \right)\cdot \sum_{\chi\in \widehat{\F_q^\times}}\CC{R_1\chi}{ Q_1\chi} \CC{R_2\chi}{Q_2\chi}\cdots \CC{R_{n}\chi}{Q_{n}\chi}\chi(\l),
\end{multline}
where $\displaystyle\CC AB \colonequals-B(-1)J(A,\ol B) = -B(-1) \sum_{x \in \fq} A(x) \overline B(1-x),$  taking values in $\Q(\zeta_M)$ or $\Q_\ell(\zeta_M)$.  When the  pair $(\ba,\bbeta)$ is primitive, let
\begin{equation}\label{eq:P-H}
    H_q(\ba,\bbeta;\l;\wp)  :  ={(-1)^{n-1}}\mathcal J(\HD;\wp)^{-1} \P(\ba,\bbeta;\l;\wp), 
\end{equation} 
where
\begin{equation}\label{eq:calJ}
    \mathcal J(\HD;\wp)\colonequals  \iota_\wp(r_1+q_1)(-1)\prod_{i=1}^n -J(\iota_\wp(r_i),\iota_\wp(q_i-r_i)). 
\end{equation}
Note that $H_q(\ba,\bbeta;\l;\wp)$ can be written in terms of Gauss sums as in \cite{BCM,McCarthy} or (4.3) of \cite{HMM1}. If $H_q(\ba,\bbeta;\l;\wp)\in \Z$, independent of which prime ideal $\wp$ above $(p)=\wp\cap \Z$, we replace $\wp$ by $p$ as in \cite{HMM1}.  In certain applications, especially in \S \ref{sec:padic}, we will fix an embedding of $\Q(\zeta_M)$ to $\overline \C_p$   by taking  $\iota_\wp\left(\frac 1{q-1}\right) \colonequals \omega_q$ to be the Teichm\"uller character $\omega_q$. Under this embedding, we write $H_q(\ba,\bbeta;\l;\wp)$ as $H_q(\ba,\bbeta;\l;\omega_q)$.
\subsection{Hypergeometric Galois representations} 
To motivate the connection between character sums and Galois representations, we call a result of Weil which allows us to view a Jacobi sum as a Gr$\ddot{\text{o}}$ssencharacter. Below $G(M)$ denotes the absolute Galois group of $\Q(\zeta_M)$ as before.
\begin{theorem}[Weil \cite{Weil52}]\label{thm:Weil} 
    Let $r,s\in \Q$ such that $r,s,r+s\notin \Z$. Let $M= \mathrm{lcd}(r,s)$.  Then there exists a representation $\chi$ of $G(M)$ such that at each nonzero prime ideal $\wp$ of $\Z[\zeta_M]$ coprime to $M$,
    \[
        \chi(\text{Frob}_\wp)=-J(\iota_\wp(r),\iota_\wp(s)),
    \]
    in which $\text{Frob}_\mathfrak{p}$ denotes the geometric Frobenius conjugacy class of $G(M)$ at $\mathfrak{p}$. 
\end{theorem}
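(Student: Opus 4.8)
The plan is to realize $\chi$ as (the $\ell$-adic avatar of) an algebraic Hecke character of the cyclotomic field $\Q(\zeta_M)$ and then translate back to a Galois representation via class field theory. First I would \emph{define} the candidate on ideals: for a prime ideal $\wp$ of $\Z[\zeta_M]$ coprime to $M$ set $\chi(\wp):=-J(\iota_\wp(r),\iota_\wp(s))$, and extend multiplicatively to all fractional ideals prime to $M$. The hypotheses $r,s,r+s\notin\Z$ together with $M=\mathrm{lcd}(r,s)$ guarantee that $\iota_\wp(r)$, $\iota_\wp(s)$, and their product $\iota_\wp(r+s)$ are all nontrivial characters of $\kappa_\wp^\times$, so the Jacobi sum is well defined and, writing $q=|\kappa_\wp|$, we have the purity bound $|J(\iota_\wp(r),\iota_\wp(s))|_v=q^{1/2}$ at every archimedean place $v$ (from $J=\g(\iota_\wp(r))\g(\iota_\wp(s))/\g(\iota_\wp(r+s))$ and $|\g(A)|=q^{1/2}$ for $A$ nontrivial). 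This is what will force $\chi$ to have motivic weight one.

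The core step is to check that $\chi$ is a Hecke character, i.e.\ to compute its value on \emph{principal} ideals and exhibit a modulus. Here I would invoke Stickelberger's theorem on the prime factorization of Gauss sums (equivalently the Gross--Koblitz formula already used elsewhere in this series): it shows first that $J(\iota_\wp(r),\iota_\wp(s))\in\Z[\zeta_M]$, and second that its prime factorization has the form $(J)=\prod_{a\in(\Z/M)^\times}\sigma_a(\wp)^{H(a)}$, where $\sigma_a\in\Gal(\Q(\zeta_M)/\Q)$ sends $\zeta_M\mapsto\zeta_M^a$ and $H(a)=\langle ar\rangle+\langle as\rangle-\langle a(r+s)\rangle\in\{0,1\}$, with $\langle\,\cdot\,\rangle$ the fractional part. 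Combined with the archimedean data $|\sigma(J)|=q^{1/2}$ for every embedding $\sigma$, this pins $J$ down up to a root of unity, and a Hasse--Davenport type congruence (or a direct computation modulo a suitable power of $M$) identifies that root of unity. The upshot is that for $\alpha\in\Z[\zeta_M]$ congruent to $1$ modulo an explicit conductor $\mathfrak f\mid M^\infty$ one has $\chi((\alpha))=\prod_{a\in(\Z/M)^\times}\sigma_a(\alpha)^{-H(a)}$; that is, $\chi$ is an algebraic Hecke character of $\Q(\zeta_M)$ with conductor a divisor of a power of $M$ and infinity type $-\sum_a H(a)\,\sigma_a$.

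Finally I would apply the standard dictionary: an algebraic Hecke character of $\Q(\zeta_M)$ with the above conductor and infinity type corresponds, after fixing a prime $\ell$ and an embedding $\overline\Q\hookrightarrow\overline\Q_\ell$, to a continuous character $\chi\colon G(M)\to\overline\Q_\ell^\times$ unramified outside $M\ell$ with $\chi(\mathrm{Frob}_\wp)=-J(\iota_\wp(r),\iota_\wp(s))$ at every $\wp\nmid M\ell$, matching conventions so that $\mathrm{Frob}_\wp$ is the \emph{geometric} Frobenius (this is what accounts for the normalization and the sign). Since the values $-J(\iota_\wp(r),\iota_\wp(s))$ are algebraic numbers independent of $\ell$, these $\chi$ for varying $\ell$ form a strictly compatible system, and treating the finitely many primes $\wp\mid M$ individually upgrades the identity to all $\wp$ coprime to $M$, as claimed. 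An alternative, more geometric route is to obtain $\chi$ directly from the $\ell$-adic cohomology of a Fermat curve (or a quotient, or a diagonal hypersurface) of degree $M$, on which Weil computed the Frobenius eigenvalues to be precisely such Jacobi sums; the $(\Z/M)\times(\Z/M)$-eigenspace decomposition of $H^1$ then produces the rank-one pieces.

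I expect the main obstacle to be exactly the principal-ideal computation --- determining the precise infinity type \emph{and} the conductor, i.e.\ pinning down the root of unity left ambiguous by the valuations --- which is the arithmetic heart of Weil's theorem and where Stickelberger's congruence does the real work; by contrast the purity bound dispatches the archimedean side and the passage to a Galois character is formal. Some care is also needed to reconcile the sign $-J$ and the normalization of the residue symbol $\iota_\wp$ with the chosen (geometric) Frobenius, but this is bookkeeping rather than a genuine difficulty.
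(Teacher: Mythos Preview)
Your proposal is a reasonable and essentially correct sketch of the classical argument, but note that the paper does \emph{not} prove this statement: it is quoted as a theorem of Weil with a citation to \cite{Weil52} and no proof is given in the paper itself. So there is no ``paper's own proof'' to compare against.

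That said, your outline is the standard route to Weil's result: define $\chi$ on prime ideals by the Jacobi sum, use Stickelberger (or Gross--Koblitz) to get the $\wp$-adic valuations and hence the infinity type, use purity $|J|=q^{1/2}$ at archimedean places, and then check the behavior on principal ideals to see that $\chi$ is an algebraic Hecke character of conductor dividing a power of $M$; finally pass to a compatible system of $\ell$-adic characters of $G(M)$. The alternative geometric realization via Fermat curves you mention is also historically correct and is in fact closer to Weil's original viewpoint. Either route is fine; the arithmetic content is precisely the Stickelberger congruence, as you identify.
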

An important result that bridges the finite field and $p$-adic settings is the Gross-Koblitz formula, see \cite[\S 2.4]{HMM1}. Here is a version of this formula for Jacobi sums.
\begin{lemma}[Corollary 2.7 of \cite{HMM1}]\label{cor:Jac-Gp}
    For $r,s\in \Q\cap (0,1)$, $p\equiv 1\pmod{\mathrm{lcd}(r,s)}$ such that $r+s< 1$.  Then 
    \begin{equation}\label{eq:5.31}
        -J_{\bar \omega_p}(r,s)=
        \Gamma_{p}\bigg(\frac{r,s}{r+s} \bigg).
    \end{equation}
\end{lemma}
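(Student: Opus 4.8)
The plan is to reduce the Jacobi sum to a ratio of Gauss sums and then apply the Gross--Koblitz formula termwise.

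Since $p\equiv 1\pmod{\mathrm{lcd}(r,s)}$, the quantities $(p-1)r$, $(p-1)s$, and $(p-1)(r+s)$ are integers, so $A\colonequals\bar\omega_p^{(p-1)r}$ and $B\colonequals\bar\omega_p^{(p-1)s}$ are honest characters of $\F_p^\times$ and $J_{\bar\omega_p}(r,s)=J(A,B)$ in the classical sense. The hypotheses $r,s\in(0,1)$ make $A$ and $B$ nontrivial, and $0<r+s<1$ makes $AB=\bar\omega_p^{(p-1)(r+s)}$ nontrivial; hence the classical Gauss--Jacobi relation (cf. \cite{Berndt-Evans-Williams}) applies:
\[
    J(A,B)=\frac{\g_{\bar\omega_p}(r)\,\g_{\bar\omega_p}(s)}{\g_{\bar\omega_p}(r+s)}.
\]
Next I would invoke the Gross--Koblitz formula in the form recalled in \cite[\S2.4]{HMM1}: fixing a solution $\pi$ of $\pi^{p-1}=-p$, one has $\g_{\bar\omega_p}(a)=-\pi^{(p-1)a}\,\Gamma_p(a)$ for every $a\in(0,1)$ with $(p-1)a\in\Z$. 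Substituting this for each of the three Gauss sums, the powers of $\pi$ telescope because $(p-1)r+(p-1)s-(p-1)(r+s)=0$, and the three minus signs combine to a single $-1$, giving
\[
    J(A,B)=\frac{\bigl(-\pi^{(p-1)r}\Gamma_p(r)\bigr)\bigl(-\pi^{(p-1)s}\Gamma_p(s)\bigr)}{-\pi^{(p-1)(r+s)}\Gamma_p(r+s)}=-\,\Gamma_p\!\left(\frac{r,s}{r+s}\right),
\]
which is exactly the assertion $-J_{\bar\omega_p}(r,s)=\Gamma_p\!\left(\tfrac{r,s}{r+s}\right)$.

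The step requiring the most care is the bookkeeping of normalizations: matching the paper's conventions for $\g_\omega(a)$ (the use of $\bar\omega_p$ rather than $\omega_p$, and the fixed additive character $\zeta_p$) against the precise form of Gross--Koblitz being cited, and confirming that the sign in the Gauss--Jacobi relation is exactly $+1$ under those conventions so that no stray sign survives. Once the conventions are pinned down these are routine, and the hypotheses $r,s,r+s\in(0,1)$ are precisely what guarantee that all three characters are nontrivial, so that none of the Gauss-sum terms degenerates (a trivial character would contribute $-1$ rather than a $\Gamma_p$-value, breaking the identity).
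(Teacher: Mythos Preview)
Your proof is correct. Note, however, that the present paper does not actually prove this lemma: it is quoted verbatim as Corollary~2.7 of \cite{HMM1} and introduced only with the remark that ``an important result that bridges the finite field and $p$-adic settings is the Gross--Koblitz formula \ldots\ here is a version of this formula for Jacobi sums.'' Your argument---decompose the Jacobi sum as $\g(A)\g(B)/\g(AB)$ via the classical Gauss--Jacobi relation (valid because $r,s,r+s\in(0,1)$ force all three characters to be nontrivial) and then apply Gross--Koblitz termwise so that the $\pi$-powers cancel---is exactly the derivation one expects behind that remark, and the sign bookkeeping you flagged does come out right under the paper's conventions.
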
 
Next we recall the following result of Katz which puts hypergeometric character sums into the Galois perspective. 
\begin{theorem}[Katz \cite{KatzESDE, Katz09}]\label{thm:Katz}  
    Let $\ell$ be a prime. Given a primitive pair of multi-sets $\ba=\{r_1,\cdots,r_n\}$, $\bbeta=\{q_1=1,q_2,\cdots,q_n\}$ with $M = M(\HD)$, for any $\l \in \Z[\zeta_M,1/M]\smallsetminus \{0\}$ the following hold: 
    \begin{itemize}
        \item [i).]There exists an $\ell$-adic Galois representation $\rho_{\{\HD;\l\}}: G(M)\rightarrow GL(W_{\l})$ unramified almost everywhere such that at each prime ideal $\mathfrak{p}$ of  $ \Z[\zeta_M,1/(M\ell \l)]$ with residue field $\kappa_\mathfrak{p}$ and norm $q=N(\mathfrak{p})=|\kappa_\mathfrak{p}|$,
        \begin{equation}\label{eq:Tr1} 
            \Tr \rho_{\{\HD;\l\}}(\text{Frob}_\mathfrak{p})= (-1)^{n-1}  \iota_{\mathfrak{p}}(r_1) (-1)\cdot \P(\ba,\bbeta; 1/\l;\mathfrak{p}).  
        \end{equation}
        \item[ii).] When $\l\neq 1$,  the dimension $d \colonequals dim_{\overline \Q_\ell}W_{\l}$ equals $n$ and all roots of the characteristic polynomial of $\rho_{\{\HD;\l\}}(\Frob_\mathfrak{p})$  are algebraic numbers and have the same absolute value $N(\mathfrak{p})^{(n-1)/2}$ under all archimedean embeddings. If $\rho_{\{\HD;\l\}}$ is self-dual, namely isomorphic to any of its complex conjugates, then $W_{\l}$ admits a non-degenerate alternating (resp. symmetric) bilinear pairing if $n$ is even and $\sum_{i=1}^n (q_i-r_i)\in\Z$ (resp. otherwise).
        \item[iii).] When $\l=1$,  $d=n-1$.  If $\rho_{\{\HD;\l\}}$ is self-dual, it contains a subrepresentation that admits a non-degenerate alternating (resp. symmetric) bilinear pairing if $n$ is even and $\sum_{i=1}^n (q_i-r_i)\in\Z$ (resp. otherwise). For this subrepresentation, the roots of the characteristic polynomial of $\mathrm{Frob}_{\mathfrak{p}}$ have absolute value exactly $N(\mathfrak{p})^{(n-1)/2}$.
    \end{itemize}
\end{theorem}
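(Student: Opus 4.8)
\emph{Proof sketch.} This is Katz's theorem; a genuinely self-contained argument requires the full machinery of hypergeometric sheaves, so in practice one simply cites \cite{KatzESDE,Katz09}, but the shape of the proof is as follows. The plan is to realize $W_{\l}$ as (a stalk of an extension of) Katz's $\ell$-adic hypergeometric sheaf on $\mathbb{G}_m$, and then read each assertion off from the sheaf's geometric and arithmetic properties together with Deligne's Weil~II.

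Concretely, fix $\ell$ and, using the residue symbols $\iota_{\wp}$ --- which are compatible over varying $\wp$ by Weil's Theorem~\ref{thm:Weil} --- attach to the roots of unity $e^{2\pi i r_j}$ and $e^{2\pi i q_j}$ the corresponding lisse rank-one Kummer sheaves on $\mathbb{G}_m/\Z[\zeta_M,1/(M\ell)]$. Iterated multiplicative ($!$-)convolution of these produces the hypergeometric sheaf $\mathcal{H}$ of type $(n,n)$ (one ``downstairs'' character being trivial since $q_1=1$); because $\HD$ is primitive, $\mathcal{H}$ is geometrically irreducible, lisse of rank $n$ on $\mathbb{G}_m\smallsetminus\{1\}$, tame at $0,1,\infty$, and arithmetically defined over $\Q(\zeta_M)$. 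For $\l\neq 1$ set $W_{\l}:=\mathcal{H}_{\bar\l}$; the descent data make it a $G(M)$-representation unramified outside $M\ell\l$. The Grothendieck--Lefschetz trace formula applied to the convolution unwinds $\Tr(\Frob_{\mathfrak{p}}\mid\mathcal{H}_{\bar\l})$ into the Gauss/Jacobi-sum factors appearing in \eqref{eq:PP} times the character sum $\P(\ba,\bbeta;1/\l;\mathfrak{p})$, and matching normalizations (the factor $\iota_{\mathfrak{p}}(r_1)(-1)$ and the inversion $\l\mapsto 1/\l$ are bookkeeping choices in \eqref{eq:PP}) yields \eqref{eq:Tr1}. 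Since $\l$ lies in the lisse locus, $d=n$; purity is Weil~II --- the hypergeometric sheaf is pure of weight $n-1$ in the normalization used here, so Frobenius eigenvalues have absolute value $N(\mathfrak{p})^{(n-1)/2}$. Self-duality of $\mathcal{H}$ holds exactly when the two multisets $\{e^{2\pi i r_j}\}$ and $\{e^{2\pi i q_j}\}$ are each stable under inversion, and then Katz's autoduality theorem determines the sign: the pairing is alternating precisely when $n$ is even and $\sum_i(q_i-r_i)\in\Z$ --- this integrality being exactly the vanishing of the determinant twist that governs the Frobenius--Schur sign contributed by the self-dual Kummer factors --- and symmetric otherwise.

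For $\l=1$, the point $1$ is the ``other'' tame singularity of $\mathcal{H}$, where local monodromy is a pseudoreflection; its space of invariants has dimension $n-1$, which gives $d=n-1$ once $W_{1}$ is taken to be the stalk at $1$ of the ($*$-, equivalently intermediate) extension of $\mathcal{H}$ across $1$. When $\mathcal{H}$ is self-dual the ambient pairing need not restrict non-degenerately to all of $W_{1}$, so one passes to the self-dual subquotient cut out by the intermediate extension: by the purity of $j_{!*}$ of a pure sheaf this piece is again pure of weight $n-1$, hence its Frobenius eigenvalues have absolute value exactly $N(\mathfrak{p})^{(n-1)/2}$, and it carries the alternating/symmetric pairing with the same parity rule as above. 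I expect the main obstacle to be precisely this $\l=1$ analysis: pinning down the local monodromy at $1$ (a pseudoreflection --- unipotent with a single size-$2$ Jordan block in the ``even'' self-dual case, a non-unipotent reflection otherwise), computing the dimension drop, and checking that discarding the radical of the induced pairing leaves a pure self-dual piece. Everything else --- the convolution construction, the Lefschetz trace identity, and the weight bound --- is the standard hypergeometric-sheaf package of \cite{KatzESDE,Katz09} combined with Weil~II; here the only role of Theorem~\ref{thm:Weil} and Lemma~\ref{cor:Jac-Gp} is to guarantee that the relevant Gauss and Jacobi sums assemble into honest characters and representations of $G(M)$.
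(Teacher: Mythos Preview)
The paper does not prove this statement at all: it is quoted as a theorem of Katz with citations to \cite{KatzESDE,Katz09}, and the only thing following it is a one-line remark that Katz's result holds for more general data with $q_1\neq 1$. So there is nothing to compare your argument against --- the authors simply import the result.

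That said, your sketch is a fair summary of where the statement comes from in Katz's work (hypergeometric sheaf as an iterated convolution of Kummer sheaves, Grothendieck--Lefschetz for the trace formula, Weil~II for purity, Katz's autoduality sign computation, and the pseudoreflection at $\l=1$ for the dimension drop). If you want to keep it, present it as an orientation paragraph rather than a proof, and be careful on two points. First, the self-duality clause in the paper's statement is phrased as a hypothesis (``If $\rho_{\{\HD;\l\}}$ is self-dual\ldots''), not as a criterion, so your ``exactly when the multisets are inversion-stable'' is stronger than what is being asserted here. Second, your description of the $\l=1$ case conflates the stalk of the $*$-extension with the intermediate extension; in Katz's setup the relevant object at $\l=1$ is a specific subquotient (and the paper only claims the pairing and exact purity on a \emph{subrepresentation}, not on all of $W_1$), so the phrase ``stalk at $1$ of the ($*$-, equivalently intermediate) extension'' is not quite right --- these are not the same, and the dimension-$(n-1)$ piece carrying the pairing is the one coming from $j_{!*}$, not from $j_*$.
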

\begin{remark}
    This result of Katz holds for more general hypergeometric data with $q_{1} \neq 1$, but we use the above statement for simplicity.
\end{remark}
\section{The \texorpdfstring{$\mathbb{K}_2$}{K2}-functions and the Proof of \texorpdfstring{\Cref{thm:L-Values}}{Theorem 1.2}} \label{sec:complexAspect}
\subsection{The explicit hypergeometric-modularity method}
In the first paper of this series \cite{HMM1} we establish various congruences between the Fourier coefficients of newforms, truncated classical hypergeometric functions, and finite field hypergeometric functions to establish the modularity of certain hypergeometric Galois representations. For the reader's convenience, we begin by restating the main result of the first paper.
\begin{theorem}[\cite{HMM1}]\label{thm:main}
    Let $n \in \{3,4\}$. Assume $\ba^\flat=\{r_1,\cdots,r_{n-1}\}$, where $0<r_1
    \le\cdots\le r_{n-1}<1$, and $\bbeta^\flat=\{1,\cdots,1\}$, with $r_n,q_n$ such that  $0<r_n<q_n\le 1$ and $r_2<q_n$.  Let $\HD=\{\{r_n\}\cup \ba^\flat,\{q_n\}\cup \bbeta^\flat\}$ and $M=M(\HD)$. Suppose $\HD$ satisfies the following hypotheses:
    \begin{enumerate}
        \item There exists a modular function $t=C_1 q^{}+O(q^2) \in\Z[[q]]$ such that 
        \begin{equation}\label{eq:f}
            f_{\HD}(q)\colonequals C_1^{-r_n}\cdot t(q)^{r_n}(1-t(q))^{q_n-r_n-1}F(\ba^\flat,\bbeta^\flat;t(q))\frac{dt(q)}{t(q)dq}
        \end{equation}
        is a congruence weight $n$ holomorphic cuspform satisfying that, for each prime $p \in P_M=\{p\mid p\equiv 1\pmod M\}$, $T_p f_{\HD}=\tilde b_p  f_{\HD}$ for some $\tilde b_p$  in $\Z$   where $T_p$ is the $p$th Hecke operator.
        \item For any prime ideal $\wp$ in $\Z[\zeta_M]$ above $p\in P_M$ we have 
        \begin{equation}\label{eq:PinZ}
            \chi_{\HD}(\wp) \cdot \P\left (\HD;1;\wp\right)\in \Z,
        \end{equation} 
        where 
        \begin{equation}\label{eq:chi-HD}
            \chi_{\HD}(\wp)\colonequals\iota_\wp(r_n)(C_1)^{-1}  \cdot \prod_{i=2}^{n-1}\iota_\wp(r_i)(-1).
        \end{equation}
\end{enumerate}
Then there exists a normalized Hecke eigenform $f_{\HD}^\sharp$, not necessarily unique, such that for each $p \in P_M$, $\tilde b_p=a_p(f_{\HD}^\sharp)$, and at each prime $p\ge 29$ in $P_M$ 
\begin{equation}\label{eq:2.9}
    { (-1)^{n-1}} \chi_{\HD}(\wp)\cdot \P(\HD;1;\wp)=a_p(f_{\HD}^\sharp)+\delta_{\gamma(\HD)=1}\cdot \psi_{\HD}(p) \cdot p.
\end{equation} 
Here $\delta_{\gamma(\HD)=1}$ is equal to 1 when ${\gamma(\HD)=1}$ and is 0 otherwise and
\begin{equation}\label{eq:sgn}
   \psi_{\HD}(p) \equiv (-1)^{n-1}\cdot \iota_p(r_n)(C_1)\G_p\left(\frac{ q_n-r_n}{\ba^\flat}\right) \pmod p. 
\end{equation}
\end{theorem}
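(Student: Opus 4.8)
The plan is to run the Explicit Hypergeometric-Modularity Method, whose four inputs --- the Euler integral \eqref{eq:inductive}, Ramanujan's theory of elliptic functions to alternative bases, Dwork's unit-root theory, and the Gross--Koblitz formula (\Cref{cor:Jac-Gp}) --- each govern one step. \textbf{Step 1 (passing to the shorter datum).} By construction $\HD$ is $\{\ba^\flat,\bbeta^\flat\}$ with the extra pair $(r_n,q_n)$ adjoined, so the classical period $F(\HD;\l)$ is the Euler integral of $t^{r_n}(1-t)^{q_n-r_n-1}F(\ba^\flat,\bbeta^\flat;\l t)\,dt/t$, the inequalities $0<r_n<q_n\le 1$ and $r_2<q_n$ ensuring convergence; since the finite-field character sum \eqref{eq:PP} obeys the same inductive recipe, $\P(\HD;1;\wp)$ is correspondingly a single Jacobi-sum-weighted $\F_q$-average of $\P(\ba^\flat,\bbeta^\flat;\cdot;\wp)$. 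First I would record both identities, so that every length-$n$ quantity is expressed through the length-$(n-1)$ local system, whose modular avatar is exactly $f_\HD$.

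\textbf{Step 2 (Dwork congruences).} The modular function $t=C_1q+O(q^2)$ pulls the length-$(n-1)$ hypergeometric connection back to the relevant modular curve, and Hypothesis~(1) asserts that the normalized integrand \eqref{eq:f} is then a weight-$n$ holomorphic cusp form $f_\HD$. I would apply Dwork's deformation theory to the Picard--Fuchs $F$-crystal of $F(\ba^\flat,\bbeta^\flat;\cdot)$: at each $p\in P_M$ it is ordinary, its unit-root subcrystal is of rank one, and its Frobenius eigenvalue at $\l$ is a $p$-adic limit of truncation ratios $F(\ba^\flat,\bbeta^\flat;\l)_{p^s-1}/F(\ba^\flat,\bbeta^\flat;\l^{p})_{p^{s-1}-1}$ as $s\to\infty$. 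Specializing at $\l=t(q)$ and twisting by the Frobenius contribution of the extra factor $t^{r_n}(1-t)^{q_n-r_n-1}\,dt/(t\,dq)$ should identify this unit root with the $p$-stabilized Hecke eigenvalue of $f_\HD$; ordinarity of $f_\HD$ at every $p\in P_M$ is forced by the hypothesis $T_pf_\HD=\tilde b_pf_\HD$ with $\tilde b_p\in\Z$, since then one Frobenius eigenvalue must be a $p$-adic unit, whence that unit root is $\equiv\tilde b_p\pmod{p^{n-1}}$, in particular modulo $p^2$.

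\textbf{Step 3 (Gross--Koblitz, assembly, and sharpening).} Expanding $\P(\HD;1;\wp)$ into Gauss sums via \eqref{eq:P-H} and \eqref{eq:calJ} and applying \Cref{cor:Jac-Gp} to the Jacobi sums in $\mathcal J(\HD;\wp)$, in $\chi_\HD(\wp)$, and in the weight factor of the Euler step turns $(-1)^{n-1}\chi_\HD(\wp)\,\P(\HD;1;\wp)$ into a sum of products of $\G_p$-values; matching this against the $p$-adic Gamma expression for the unit root from Step 2 identifies the unit part with $a_p(f_\HD^\sharp)$. When the shorter datum forces a degeneration at $\l=1$ --- the combinatorial shadow, measured by $\gamma(\HD)=1$, of the rank drop $d=n-1$ and the possible weight mixing in \Cref{thm:Katz}(iii) --- exactly one further root, of $p$-adic valuation $1$, remains, which \Cref{cor:Jac-Gp} evaluates as $\psi_\HD(p)\cdot p$ with $\psi_\HD(p)$ as in \eqref{eq:sgn}; this gives \eqref{eq:2.9} as a congruence modulo $p^2$. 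To upgrade it to an equality and to produce $f_\HD^\sharp$: since $f_\HD$ lies in a fixed finite-dimensional space of cusp forms on which each $T_p$ with $p\in P_M$ acts by the integer $\tilde b_p$, decomposing $f_\HD$ into Hecke eigencomponents at the finitely many remaining places yields a normalized Hecke eigenform $f_\HD^\sharp$ with $a_p(f_\HD^\sharp)=\tilde b_p$ for all $p\in P_M$; both sides of \eqref{eq:2.9} then lie in $\Z$ by Hypothesis~(2), and Deligne's bound $|a_p(f_\HD^\sharp)|\le 2p^{(n-1)/2}$ together with the archimedean bound on the character sum from \Cref{thm:Katz}(iii) forces their absolute values below $p^2/2$ once $p\ge 29$, so the mod-$p^2$ congruence becomes the stated equality.

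\textbf{Main obstacle.} The crux is Step 2: turning ``a hypergeometric period pulled back along a modular function is a weight-$n$ cusp form'' into an honest Dwork congruence that is uniform over $p\in P_M$ and of modulus exactly $p^2$. This needs the correct identification of the relevant $F$-crystal and its unit-root/formal-group structure, control of the unit root under the nonlinear substitution $\l=t(q)$ --- which is precisely where Ramanujan's alternative-base identities make the pullback explicit --- and verification that $f_\HD$ is $p$-ordinary at every $p\in P_M$; arranging the three so that the output is a genuine $p^2$-supercongruence, rather than a weaker one, is the heart of the matter. A secondary, purely bookkeeping, difficulty is tracking $\chi_\HD(\wp)$, $\mathcal J(\HD;\wp)$, and the sign $(-1)^{n-1}$ carefully enough that the clean formulas \eqref{eq:2.9} and \eqref{eq:sgn} --- in particular the factor $\iota_p(r_n)(C_1)$ and the quotient $\G_p\left(\frac{q_n-r_n}{\ba^\flat}\right)$ --- come out with the right normalization.
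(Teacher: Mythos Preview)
This theorem is not proved in the present paper: it is quoted verbatim from the companion paper \cite{HMM1} (note the attribution in the theorem header), and no argument for it appears here. Consequently there is no ``paper's own proof'' against which to compare your proposal.

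That said, your outline matches the high-level description the authors give of the Explicit Hypergeometric-Modularity Method in the introduction --- Euler integral, Ramanujan's alternative bases, Dwork unit-root theory, and Gross--Koblitz --- so the overall architecture is the intended one. Whether the details (particularly your Step~2 identification of the unit root with the Hecke eigenvalue to modulus $p^2$, and the precise bookkeeping that produces the constants in \eqref{eq:sgn}) go through as you sketch them cannot be checked against this paper; you would need to consult \cite{HMM1} directly, where the actual proof lives.
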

\subsection{Construction of the \texorpdfstring{$\BK_2$-functions}{K2-functions}}
Throughout this paper, $n=3$, and so we suppress the subscript on $r_n$ and $q_n$.  To avoid confusion between the parameter $q_3$ and $q=e^{2\pi i \tau}$ we use $s$ for $q_3$ in this paper.

In this paper we let $\ba^\flat=\{\frac12,\frac12\},\bbeta^\flat=\{1,1\},$ and $t$ to be the modular lambda function 
\begin{equation}\label{eq:lambda-in-eta}
  \l(\tau)=16 \frac{\eta(\frac{\tau}{2})^{8}\eta(2 \tau)^{16}}{\eta(\tau)^{24}}.
\end{equation}
In the classic theory,  one has the following when both sides make sense
\begin{equation}\label{eqn: alt-2}
   \pFq21{\frac12&\frac12}{&1}{\l(\tau)}=\sum_{n,m\in \Z}q^{(n^2+m^2)/2}=\theta_3(\tau)^2,
\end{equation}
where $\theta_3(\tau)$ is a weight $\frac{1}{2}$ Jacobi-theta function \cite{Zagier-modularform}. Using \eqref{eq:lambda-in-eta} and \eqref{eqn: alt-2}, we have
\begin{lemma}[\cite{HMM1}]\label{lem:E-eta}
    Given $r,s\in \Q$, 
    \begin{equation}\label{eq:alt-2-eta}
        {\l}(\tau)^{r}{(1-\l(\tau))}^{s-r-1}   \pFq21{\frac 12&\frac 12}{&1}{\l(\tau)}  \frac{d \l(\tau)}{\l(\tau)dq} = 2^{4r -1 } \BK_2(r,s)(\tau),
    \end{equation} 
    where as in the introduction
    \begin{equation*}
        \BK_2(r,s)(\tau):= \frac{\eta \left(\frac{\tau}{2} \right)^{16s-8r-12}\eta(2 \tau)^{8s+8r-12}}{\eta(\tau)^{24s-30}}.
    \end{equation*}
\end{lemma}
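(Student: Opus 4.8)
The plan is to verify the identity \eqref{eq:alt-2-eta} directly by expressing every factor on the left-hand side in terms of the Dedekind eta function and then matching exponents. First I would substitute the eta-product expression \eqref{eq:lambda-in-eta} for $\l(\tau)$, writing $\l(\tau) = 16\,\eta(\tfrac{\tau}{2})^8\eta(2\tau)^{16}\eta(\tau)^{-24}$; raising this to the power $r$ contributes a factor $2^{4r}$ together with the eta-product $\eta(\tfrac{\tau}{2})^{8r}\eta(2\tau)^{16r}\eta(\tau)^{-24r}$. Next I would handle the factor $(1-\l(\tau))^{s-r-1}$: here the key classical fact is that $1-\l(\tau)$ is itself an eta-quotient — indeed $1-\l(\tau) = \eta(\tfrac{\tau}{2})^{16}\eta(2\tau)^{-8}\eta(\tau)^{-8}\cdot(\text{constant})$, which follows from the standard theta relation $\theta_3^4 = \theta_2^4 + \theta_4^4$ combined with the eta representations of the theta-nulls (or equivalently from the product formula for $\l$ and $1-\l$ in terms of $q$-products). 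Substituting this in gives another monomial in $\eta(\tfrac{\tau}{2}),\eta(2\tau),\eta(\tau)$ with exponents linear in $s-r-1$.

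The remaining two factors on the left are $\pFq21{1/2&1/2}{&1}{\l(\tau)}$ and the logarithmic-derivative factor $\frac{d\l(\tau)}{\l(\tau)\,dq}$. For the first, I would invoke \eqref{eqn: alt-2}, which identifies the hypergeometric value with $\theta_3(\tau)^2$, and then use the eta-expression for $\theta_3$ (namely $\theta_3(\tau) = \eta(\tau)^5\eta(\tfrac{\tau}{2})^{-2}\eta(2\tau)^{-2}$, up to the usual normalization), so that $\theta_3(\tau)^2$ becomes $\eta(\tau)^{10}\eta(\tfrac{\tau}{2})^{-4}\eta(2\tau)^{-4}$. For the logarithmic-derivative factor, I would compute $\frac{d\l}{\l\,dq} = \frac{d\log\l}{dq}$ using $\log\l = \log 16 + 8\log\eta(\tfrac{\tau}{2}) + 16\log\eta(2\tau) - 24\log\eta(\tau)$ and the transformation $q\frac{d}{dq} = \frac{1}{2\pi i}\frac{d}{d\tau}$; the derivative of $\log\eta$ is essentially the Eisenstein series $E_2$ (via $\frac{\eta'(\tau)}{\eta(\tau)} = \frac{\pi i}{12}E_2(\tau)$), and the specific combination $8E_2(\tfrac{\tau}{2}) + 16\cdot 2\,E_2(2\tau) - 24\,E_2(\tau)$ that appears must collapse to an eta-product. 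Here the main obstacle is precisely this: a priori $\frac{d\l}{\l\,dq}$ is only a weight-two quasimodular form, but the non-holomorphic/quasimodular $E_2$ contributions should cancel because $8 + 32 - 24 = 16 \neq 0$ — wait, more care is needed, and the actual statement one needs is the classical identity $q\frac{d\l}{dq} = \l(1-\l)\,\theta_3^4$, equivalently $\frac{d\l}{\l\,dq} = (1-\l)\theta_3^4$, which is a standard fact from the theory of the hypergeometric differential equation satisfied by $\theta_3^2$ (the Picard–Fuchs equation for the Legendre family). Using this identity avoids computing the $E_2$ combination by hand and immediately expresses the logarithmic-derivative factor as $(1-\l)\theta_3^4$ in eta-products.

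Assembling the pieces: the left-hand side becomes $2^{4r}$ times a product of the eta-monomial from $\l^r$, the eta-monomial from $(1-\l)^{s-r-1}\cdot(1-\l)^{1} = (1-\l)^{s-r}$ (folding in the $(1-\l)$ from the derivative identity), and the eta-monomial $\theta_3^2\cdot\theta_3^4 = \theta_3^6 = \eta(\tau)^{30}\eta(\tfrac{\tau}{2})^{-12}\eta(2\tau)^{-12}$. The final step is bookkeeping: collect the total exponent of each of $\eta(\tfrac{\tau}{2})$, $\eta(2\tau)$, $\eta(\tau)$ as a linear function of $r$ and $s$, check the numerical constants multiply to $2^{4r-1}$, and confirm the result matches $2^{4r-1}\,\eta(\tfrac{\tau}{2})^{16s-8r-12}\eta(2\tau)^{8s+8r-12}\eta(\tau)^{-(24s-30)}$ from the definition \eqref{eq:E}. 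I expect the exponent matching to be routine once the structural identities ($1-\l$ as an eta-quotient, $\theta_3$ as an eta-quotient, and $q\,d\l/dq = \l(1-\l)\theta_3^4$) are in hand; the only genuine subtlety is ensuring the correct normalization of $\theta_3$ in eta-products and tracking the constant $16 = 2^4$ from \eqref{eq:lambda-in-eta} through the power $r$, which together with the $-1$ in the exponent of $2$ in \eqref{eq:alt-2-eta} accounts for the factor $2^{4r-1}$.
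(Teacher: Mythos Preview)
Your approach is correct and is exactly what the paper indicates: the lemma is cited from \cite{HMM1}, and the paper's only hint is the sentence ``Using \eqref{eq:lambda-in-eta} and \eqref{eqn: alt-2}, we have,'' i.e., plug in the eta-product for $\lambda$, the identity ${}_2F_1(1/2,1/2;1;\lambda)=\theta_3^2$, the companion eta-expressions for $1-\lambda$ and $\theta_3$, and the Picard--Fuchs relation $q\,d\lambda/dq = \tfrac12\lambda(1-\lambda)\theta_3^4$ (which is where the factor $2^{-1}$ in $2^{4r-1}$ originates), then collect exponents. One small slip: your eta-expression for $1-\lambda$ has the wrong sign on the $\eta(2\tau)$ exponent and a spurious constant --- the correct identity is $1-\lambda(\tau)=\eta(\tau/2)^{16}\eta(2\tau)^{8}\eta(\tau)^{-24}$ --- but once that is fixed the exponent bookkeeping goes through as you describe.
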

Further, we define a particular set of parameters by
\begin{equation}\label{eq:S-defn}
   {\mathbb S}_2:=\left\{(r,s)\mid 0<r<s<3/2, r\neq 1, s\neq \frac12, \, 24s\in \Z\, , 8(r+s)\in\Z\right\}.
\end{equation}
The set $ {\mathbb S}_2$ is chosen such that for each $(r,s)$ in $\mathbb S_2$, $\mathbb K_2(r,s)$ is a congruence holomorphic cuspidal modular form of weight three, and the datum $\left\{\substack{\frac12,\frac12,r\\ 1,1,s}\right\}$ is primitive. 
\begin{defn}[\cite{HMM1}]\label{defn:K2}
    Given $r,s\in \Q$, define  
    \begin{equation}\label{eq:E-E*}
        \BK_2^*(r,s)(\tau)\colonequals \BK_2(s-r,s)(\tau);
    \end{equation}
    and for $r\in \Q_{\ge0}$ such that $24r\in \Z$ we define
    \begin{equation}\label{eq:N(r)}
        N(r):=\frac {48}{\gcd(24r,24)}.
    \end{equation}
\end{defn}
\begin{lemma}[Lemma 3.1  \cite{HMM1}]\label{lem:K2-level}
    For each $(r,s)\in  {\mathbb S}_2$, $\BK_2(r,s) (N(r)\tau)$ is a congruence weight three holomorphic cusp form of  level $ N(r)N(s-r)$ with Dirichlet character $\left(\frac{-2^{24s}}\cdot\right)$. 
    Moreover, $\BK_2(r,s)(-1/\tau)=2^{4s-8r}i\cdot \tau^3 \BK^{*}_2(r,s)(\tau)$. 
\end{lemma}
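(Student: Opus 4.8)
The plan is to realise $\BK_2(r,s)$ as an eta quotient and apply the standard criteria (Ligozat, Newman, Gordon--Hughes) for an eta quotient to define a holomorphic modular form on $\Gamma_0(N)$ with nebentypus. First I would note that the conditions $24s\in\Z$ and $8(r+s)\in\Z$ defining $\mathbb{S}_2$ force the three exponents in \eqref{eq:E} to be integers, since, e.g., $16s-8r-12=24s-8(r+s)-12$; thus $\BK_2(r,s)$ is a genuine eta quotient, holomorphic and non-vanishing on $\H$. Because $N(r)=2\cdot 24/\gcd(24r,24)$ is even by \eqref{eq:N(r)} (here $24r\in\Z$ is implicit, as $N(r)$ appears in the statement), the rescaling $\tau\mapsto N(r)\tau$ turns it into $g(\tau):=\eta(\tfrac{N(r)}{2}\tau)^{16s-8r-12}\,\eta(N(r)\tau)^{30-24s}\,\eta(2N(r)\tau)^{8s+8r-12}$, and each of $\tfrac{N(r)}{2},N(r),2N(r)$ divides $N:=N(r)N(s-r)$ (using that $N(s-r)$ is even too), so $g$ is an eta quotient attached to $\Gamma_0(N)$.

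I would then run the eta-quotient checklist for $g$ on $\Gamma_0(N)$: \textbf{(i)} the weight is $\tfrac12[(16s-8r-12)+(30-24s)+(8s+8r-12)]=3$; \textbf{(ii)} the divisibility conditions $\sum_\delta \delta\, r_\delta\equiv 0$ and $\sum_\delta (N/\delta)\,r_\delta\equiv 0\pmod{24}$ — this is precisely what the factor $48$ and the gcd's in \eqref{eq:N(r)} are engineered to guarantee, and is a finite check; \textbf{(iii)} the nebentypus is $\left(\frac{(-1)^3\prod_\delta\delta^{r_\delta}}{\cdot}\right)$, and a short computation gives $\prod_\delta\delta^{r_\delta}=2^{16r-8s}N(r)^6$, so, since $N(r)^6$ is a square and $16r-8s-24s=2(8(r+s)-24s)$ is even, the character is $\left(\frac{-2^{24s}}{\cdot}\right)$ as claimed; \textbf{(iv)} $g$ is holomorphic on $\H$, and the order of vanishing at the cusp $a/c$ (with $c\mid N$) is a positive multiple of $\sum_\delta \gcd(c,\delta)^2 r_\delta/\delta$, which one must verify is strictly positive for every $c\mid N$, yielding that $g$ is a cusp form. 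Since $g(\tau)=\BK_2(r,s)(N(r)\tau)$, this settles the first assertion, congruence being automatic on $\Gamma_0(N)$.

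For the $S$-transformation I would substitute $\tau\mapsto -1/\tau$ directly into \eqref{eq:E} and use $\eta(-1/w)=\sqrt{-iw}\,\eta(w)$ with $w=2\tau,\,\tau,\,\tfrac\tau2$ (writing $-2/\tau=-1/(\tau/2)$): $\eta(-1/(2\tau))=\sqrt{-2i\tau}\,\eta(2\tau)$, $\eta(-1/\tau)=\sqrt{-i\tau}\,\eta(\tau)$, $\eta(-2/\tau)=\sqrt{-i\tau/2}\,\eta(\tau/2)$. The $\eta$-part of the result is $\eta(\tfrac\tau2)^{8s+8r-12}\eta(2\tau)^{16s-8r-12}/\eta(\tau)^{24s-30}$, which is precisely $\BK_2(s-r,s)(\tau)=\BK_2^*(r,s)(\tau)$, the two exponents having been swapped by $r\mapsto s-r$. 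Writing $\sqrt{-2i\tau}=\sqrt2\,\sqrt{-i\tau}$ and $\sqrt{-i\tau/2}=\sqrt{-i\tau}/\sqrt2$ (consistent with principal branches since $-i\tau$ has positive real part for $\tau\in\H$), the total power of $\sqrt2$ is $(16s-8r-12)-(8s+8r-12)=8s-16r$, contributing $2^{4s-8r}$, and the total power of $\sqrt{-i\tau}$ is $(16s-8r-12)+(8s+8r-12)-(24s-30)=6$, contributing $(-i\tau)^3=i\tau^3$. Hence $\BK_2(r,s)(-1/\tau)=2^{4s-8r}\,i\,\tau^3\,\BK_2^*(r,s)(\tau)$.

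The main obstacle is step \textbf{(iv)}: the exponents $r_\delta$ need not all be positive — for instance $30-24s<0$ once $s>\tfrac54$, which is allowed by $s<\tfrac32$ — so positivity of the cusp orders is not automatic, and it is exactly here that the level being precisely $N(r)N(s-r)$ and the remaining constraints defining $\mathbb{S}_2$ ($0<r<s<\tfrac32$, $r\neq 1$, $s\neq\tfrac12$) are used. Verifying $\sum_\delta \gcd(c,\delta)^2 r_\delta/\delta>0$ divisor by divisor, together with the $\pmod{24}$ conditions of step \textbf{(ii)}, is the bulk of the work, whereas the weight, the nebentypus and the $S$-transformation are all direct computations.
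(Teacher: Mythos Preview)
Your approach is correct and is the standard one. Note that the present paper does not actually prove this lemma---it is quoted verbatim from Part~I of the series (\cite{HMM1}, Lemma~3.1), so there is no in-paper proof to compare against; the eta-quotient criteria you invoke are exactly how such a statement is established in \cite{HMM1}. Two small remarks: the hypothesis $24r\in\Z$ is not merely ``implicit'' but follows from the defining conditions of $\mathbb{S}_2$ via $24r=3\cdot 8(r+s)-24s$, and your identification of step \textbf{(iv)} (positivity of cusp orders when some exponent, e.g.\ $30-24s$, is negative) as the substantive verification is on point---that is indeed where the inequalities $0<r<s<\tfrac32$ and the precise choice of level $N(r)N(s-r)$ enter.
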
 
\subsection{The Galois and non-Galois Cases} \label{ss:K2-Galois}
Back to Theorem \ref{thm:main}, for each $(r,s)\in \mathbb S_2$, assumption (1) is satisfied by the construction of $\mathbb S_2$. The next definition addresses when assumption (2), namely when \eqref{eq:PinZ} holds, for $\HD=\{\{\frac12,\frac12,r\},\{1,1,s\}\}$.
\begin{defn}\label{defn:galois}
    A pair $(r,s)\in\mathbb S_2$  is said to be in a \emph{Galois orbit} for the $\BK_2$-family if for any $c\in (\Z/N\Z)^\times$ where $N=\mathrm{lcd}(r,s)$, there exists $(r_c,s_c)\in \mathbb S_2$ such that $cr-r_c, cs-s_c\in\Z$,     such that $\mathbb K_2(r,s)$ and $\mathbb K_2(r_c,s_c)$ are in the same Hecke orbit. In this case, we use $f_{\HD}^\sharp$ to denote a normalized Hecke eigenform in the same orbit.
\end{defn}

Let $O$ be the Galois orbit associated to $HD$. Then $f_{\HD}^\sharp=\sum_{(r,s)\in O} h(r,s) \BK(r,s)$ where $h(r,s)\in \overline \Q$. There are  non-Galois cases such as $\{\left (\frac18,\frac 14\right), \left (\frac58,\frac 54\right),\left (\frac38,\frac 34\right)\}$. These pairs are conjugates in ${\mathbb S}_2$. The first two are in the same Hecke orbit, for example,
\begin{eqnarray*}
     \BK_2\left (1/8, 1/4\right)(16\tau)-8 \BK_2\left (5/8, 5/4\right)(16\tau)&=&f_{256.3.c.a}, \\ 
     \BK_2\left (1/8,1/4\right)(16\tau)+8 \BK_2\left (5/8, 5/4\right)(16\tau)&=&f_{256.3.c.b},
\end{eqnarray*}
while $ \BK_2\left (\frac38,\frac 34\right)(16\tau)$ is in the orbit of $f_{256.3.c.d}$. See Corollary 3.1 of \cite{HMM1}  for  the determination of $h(r,s)$ and the work by Rosen \cite{ENRosen} for more detailed discussion.

Using the idea and argument in the proof of Proposition 6.1 of \cite{HMM1}, one has
\begin{theorem}
    If $(r,s)\in\mathbb S_2$ is in a Galois orbit $O$,  then assumption (2) of \Cref{thm:main} is also satisfied for $\HD=\{\{\frac12,\frac12,r\},\{1,1,s\}\}$. 
\end{theorem}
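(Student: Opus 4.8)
The plan is to reduce the claim --- that $\chi_{\HD}(\wp)\cdot\P(\HD;1;\wp)\in\Z$ for $\HD=\{\{\tfrac12,\tfrac12,r\},\{1,1,s\}\}$ when $(r,s)$ lies in a Galois orbit $O$ --- to the Galois-equivariance already packaged in \Cref{thm:Weil} and \Cref{thm:Katz}, combined with the definition of a Galois orbit. First I would invoke \Cref{thm:Katz}(i) to interpret $(-1)^{n-1}\iota_\wp(r)(-1)\cdot\P(\HD;1;\wp)$ as the trace of geometric Frobenius $\mathrm{Frob}_\wp$ acting on the $\ell$-adic hypergeometric representation $\rho_{\{\HD;1\}}$ of $G(M)$, where $M=M(\HD)=\mathrm{lcm}(4,N)$ with $N=\mathrm{lcd}(r,s)$. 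Since this quantity is a priori an algebraic number in $\Q(\zeta_M)$, showing it lies in $\Z$ amounts to showing it is fixed by every element of $\mathrm{Gal}(\Q(\zeta_M)/\Q)$, i.e.\ is Galois-stable under the $c\mapsto(r_c,s_c)$ action on the parameters.

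The key step is to track how the Galois action on $\Q(\zeta_M)$ permutes the relevant Jacobi sums. For $c\in(\Z/M\Z)^\times$, a standard property of the residue symbol \eqref{eqn:residue symbol} gives $\sigma_c\big(J(\iota_\wp(a),\iota_\wp(b))\big)=J(\iota_\wp(ca),\iota_\wp(cb))=J(\iota_{\wp^{(c)}}(a),\iota_{\wp^{(c)}}(b))$ for the appropriately twisted prime ideal, so that $\sigma_c$ sends $\P(\HD;1;\wp)$ (up to the explicit unit/root-of-unity factors carried by $\chi_{\HD}$) to the analogous quantity for the conjugate datum $\HD_c=\{\{\tfrac12,\tfrac12,r_c\},\{1,1,s_c\}\}$; here one uses that $\{\tfrac12,\tfrac12\}$ and $\{1,1\}$ are stable (as multisets in $\Q/\Z$) under multiplication by any odd $c$, so only the third entries move. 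Then, since $(r,s)$ is in the Galois orbit $O$, Definition~\ref{defn:galois} guarantees $(r_c,s_c)\in\mathbb S_2$ with $\BK_2(r,s)$ and $\BK_2(r_c,s_c)$ in the same Hecke orbit, hence --- by the modularity half of the EHMM machinery (\Cref{thm:main} applied to $\HD_c$, whose hypothesis (1) holds by construction of $\mathbb S_2$, mirroring the argument in Proposition~6.1 of \cite{HMM1}) --- the normalized eigenforms $f_{\HD_c}^\sharp$ and $f_{\HD}^\sharp$ agree, so their coefficients $a_p$ coincide and the corresponding normalized $\P$-values are equal. Combining: $\sigma_c$ fixes $\chi_{\HD}(\wp)\cdot\P(\HD;1;\wp)$ for all $c$, hence it lies in $\Q$; being an algebraic integer (Gauss/Jacobi sums are algebraic integers and the correcting factors in $\chi_{\HD}$ are units --- here $C_1=16$, so $\iota_\wp(r)(16)$ is a root of unity), it lies in $\Z$.

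I would organize the argument in this order: (a) recall $M=M(\HD)$ and identify $\mathrm{Gal}(\Q(\zeta_M)/\Q)\cong(\Z/M\Z)^\times$; (b) record the conjugation law $\sigma_c\circ\iota_\wp=\iota_{\wp^{(c)}}\circ(c\,\cdot)$ on characters and push it through the definitions \eqref{eq:PP}--\eqref{eq:calJ} of $\P$ and through \eqref{eq:chi-HD}, checking the bookkeeping on the $R_iQ_i(-1)$ and $\iota_\wp(r_i)(-1)$ factors; (c) use primitivity of each $\HD_c$ (guaranteed by the $\mathbb S_2$-membership) so the finite-field hypergeometric normalization is legitimate; (d) invoke the Hecke-orbit sameness from Definition~\ref{defn:galois} together with \Cref{thm:main} to conclude $\chi_{\HD_c}(\wp^{(c)})\cdot\P(\HD_c;1;\wp^{(c)})=\chi_{\HD}(\wp)\cdot\P(\HD;1;\wp)$ as algebraic numbers, modulo controlling the $\delta_{\gamma(\HD)=1}\psi_{\HD}(p)p$ term --- note $\gamma(\HD_c)=\gamma(\HD)$ since $\gamma$ depends only on the datum up to the $(\Z/M\Z)^\times$-action; (e) conclude Galois-invariance and integrality.

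The main obstacle I anticipate is step (b)/(d): making sure the sign and root-of-unity prefactors in $\P$ and in $\chi_{\HD}$ transform \emph{consistently} under $\sigma_c$, and in particular that the error term $\delta_{\gamma(\HD)=1}\cdot\psi_{\HD}(p)\cdot p$ in \eqref{eq:2.9} is itself Galois-stable --- since $\psi_{\HD}(p)$ is only defined modulo $p$ via $p$-adic Gammas, one must argue that the genuine algebraic quantity $\chi_{\HD}(\wp)\P(\HD;1;\wp)-a_p(f_{\HD}^\sharp)$ is an honest rational integer multiple of $p$ (not merely congruent), which is exactly where the geometric/Katz interpretation of $\P$ and the purity bound in \Cref{thm:Katz}(ii) pin it down. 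Once that is in place, the conclusion that the sum lies in $\Z$ is immediate, exactly paralleling Proposition~6.1 of \cite{HMM1}; indeed the whole point is that this theorem is that proposition's argument applied verbatim to the length-three $\BK_2$ setting rather than the length-three $\BK_1$ setting.
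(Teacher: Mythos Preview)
Your overall strategy---Galois equivariance of the $\P$-values together with the fact that all conjugate data land in the same Hecke orbit---is indeed the ``idea and argument in the proof of Proposition~6.1 of \cite{HMM1}'' that the paper invokes. So the broad shape is right.

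However, there is a genuine circularity in your execution of step~(d). You propose to apply \Cref{thm:main} to each conjugate datum $\HD_c$ in order to deduce, via \eqref{eq:2.9}, that the normalized $\P$-values all equal the same $a_p(f^\sharp)$ (plus the $\delta\psi p$ term). But \eqref{eq:2.9} is the \emph{conclusion} of \Cref{thm:main}, and \Cref{thm:main} requires assumption~(2) as a hypothesis---which is precisely what you are trying to prove. You flag this as an ``obstacle'' in your final paragraph, but your proposed resolution (that Katz purity ``pins down'' the difference as a rational integer multiple of $p$) is again the very statement in question: showing that $\chi_{\HD}(\wp)\P(\HD;1;\wp)-a_p(f_{\HD}^\sharp)$ lies in $\Z$ is equivalent to the claim.

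The non-circular argument, visible in the paper's proof of \Cref{lem:f2D} (which explicitly cites the same Proposition~6.1), runs as follows. One first establishes, using only hypothesis~(1)---i.e.\ Dwork's theory and commutative formal group laws as in \cite[\S3.3]{HMM1}---a congruence $\chi_{\HD}(\wp)\P(\HD;1;\wp)\equiv \tilde b_p\pmod{p}$ (or $\pmod{p^2}$ in the weight-three setting), where $\tilde b_p\in\Z$ is the Hecke eigenvalue of $\BK_2(r,s)$. Since each $\sigma_c$ carries this quantity to $\chi_{\HD_c}(\wp)\P(\HD_c;1;\wp)$ and the Galois-orbit hypothesis places $\BK_2(r_c,s_c)$ in the same Hecke orbit, every Galois conjugate satisfies the same congruence with the same integer $\tilde b_p$. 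Writing the difference $w=\chi_{\HD}(\wp)\P(\HD;1;\wp)-\tilde b_p$ in an integral basis of $\Z[\zeta_M]$, each coordinate is then divisible by the relevant power of $p$; the archimedean bound from \Cref{thm:Katz} (here the eigenvalues have absolute value $p$, so $|w|\le 4p$) forces all coordinates to vanish for $p$ large, and small primes are checked directly. Note in particular that for weight three the mod-$p$ congruence alone is insufficient against the $O(p)$ bound; one needs the mod-$p^2$ supercongruence input from \cite{HMM1}, which is available from hypothesis~(1) without assuming~(2).
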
 
We will focus on the following family in this paper:
\begin{eqnarray}
    G2: & &\left(\frac{j}{24},\frac{24-j}{24}\right)\text{ for } j\in\{1,\cdots,11\}. \label{eq:G2}
\end{eqnarray}
Let $D=24/\gcd(j,24)$. For each $D$, the corresponding orbit is 
\[
    O(D)=\left\{\left(\frac iD, \frac{D-i}D+\left\lfloor \frac{2i}D \right\rfloor\right) \mid  i \in (\Z/D\Z)^\times\right\}.
\]
\begin{cor}\label{cor:G2-char}
    Every pair $(r,s)\in \mathbb S_2$ is in one of the G2 orbits if and only if $r+s=1$ or 2.
\end{cor}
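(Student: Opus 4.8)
The plan is to unwind the definitions and show that the condition "$(r,s)\in\mathbb S_2$ lies in a G2 orbit $O(D)$ for some $D$" is equivalent to the purely arithmetic constraint $r+s\in\{1,2\}$. First I would record what membership in $\mathbb S_2$ forces: by \eqref{eq:S-defn} we have $0<r<s<3/2$, $r\neq 1$, $s\neq\tfrac12$, $24s\in\Z$, and $8(r+s)\in\Z$; in particular $r+s\in\tfrac18\Z\cap(0,3)$. The G2 family \eqref{eq:G2} is the set of pairs $\left(\tfrac j{24},\tfrac{24-j}{24}\right)$ for $1\le j\le 11$, all of which manifestly satisfy $r+s=1$, and the associated orbit is
\[
    O(D)=\left\{\left(\frac iD,\ \frac{D-i}D+\left\lfloor\frac{2i}D\right\rfloor\right)\ \Big|\ i\in(\Z/D\Z)^\times\right\},\qquad D=\frac{24}{\gcd(j,24)}.
\]
So the forward direction ("G2 orbit $\Rightarrow$ $r+s\in\{1,2\}$") reduces to checking, for a representative $(r,s)=\left(\tfrac iD,\tfrac{D-i}D+\lfloor 2i/D\rfloor\right)$ with $\gcd(i,D)=1$ and (after normalizing the representative into $\mathbb S_2$) $0<i<D$, that $r+s=\tfrac iD+\tfrac{D-i}D+\lfloor 2i/D\rfloor=1+\lfloor 2i/D\rfloor$, and that $\lfloor 2i/D\rfloor\in\{0,1\}$ since $0<i<D$. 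Thus $r+s=1$ when $i<D/2$ and $r+s=2$ when $i>D/2$ (the value $i=D/2$ is excluded by $\gcd(i,D)=1$ for $D>2$, and handled directly for small $D$), giving one direction.

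For the converse I would argue that any $(r,s)\in\mathbb S_2$ with $r+s=1$ or $2$ is forced to be of the above form. Write $r=a/N$ in lowest terms with $N=\mathrm{lcd}(r,s)$. From $8(r+s)\in\Z$ and $24s\in\Z$ together with $r+s\in\{1,2\}$ one deduces $24r\in\Z$, so $N\mid 24$; combined with $r\neq 1$, $s\neq\tfrac12$, $0<r<s<3/2$ this pins down the finitely many possibilities. If $r+s=1$ then $s=1-r$, and the constraints $0<r<s$, $r\neq 1$ force $0<r<\tfrac12$, so $r=i/D$ with $D=N=24/\gcd(j,24)$-type denominator and $0<i<D/2$, $\gcd(i,D)=1$; then $s=1-i/D=(D-i)/D=(D-i)/D+\lfloor 2i/D\rfloor$ since $\lfloor 2i/D\rfloor=0$, so $(r,s)\in O(D)$. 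If $r+s=2$ then $s=2-r$ and $s<3/2$ forces $r>\tfrac12$; writing $r=i/D$ with $D/2<i<D$ and $\gcd(i,D)=1$ gives $s=2-i/D=(D-i)/D+1=(D-i)/D+\lfloor 2i/D\rfloor$, so again $(r,s)\in O(D)$. It remains to check that each such orbit $O(D)$ actually arises from some $j\in\{1,\dots,11\}$, i.e. that every $D$ of the form $24/\gcd(j,24)$ occurs — which is immediate by running through $j=1,\dots,11$ and reading off $D\in\{24,12,8,6,4,3\}$ as in Table \ref{tab:main} — and that each pair in $O(D)$ does land back in $\mathbb S_2$, which follows from the $\mathbb S_2$-defining congruences since $24i/D,\ 24(D-i)/D\in\Z$ and $8(r+s)=8$ or $16\in\Z$.

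The main obstacle, such as it is, is bookkeeping rather than conceptual: one must be careful that the representative of an element of $O(D)$ used to test membership in $\mathbb S_2$ is the one lying in the strip $0<r<s<3/2$ (the floor term $\lfloor 2i/D\rfloor$ is precisely the device that pushes $s$ into this range), and one must correctly handle the boundary/degenerate denominators — small $D$ (e.g. $D=3,4,6$ where "$i>D/2$ vs $i<D/2$" has few cases) and the excluded values $r=1$, $s=\tfrac12$ — to confirm no spurious pairs with $r+s\in\{1,2\}$ are missed or wrongly included. I expect the cleanest write-up to simply fix $D\in\{3,4,6,8,12,24\}$, list $(\Z/D\Z)^\times$, and tabulate the resulting pairs, cross-referencing Table \ref{tab:main}; the equivalence then drops out by inspection together with the denominator argument above.
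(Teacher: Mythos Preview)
Your proposal is correct and is precisely the direct unwinding of the definitions that the paper intends: the corollary is stated in the paper without proof, as an immediate consequence of the explicit description of $O(D)$, and your argument supplies exactly that verification. The only minor clean-up is that the case $D=2$ (and hence $i=D/2$) never arises since the excluded values $r=1$, $s=\tfrac12$, and $r<s$ force the reduced denominator of $r$ to lie in $\{3,4,6,8,12,24\}$, so your parenthetical about ``small $D$'' can simply be dropped.
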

In \Cref{tab:G2-eigenforms} below, we list the corresponding $f_{\{\frac12,\frac12,\frac j{24}\},\{1,1,\frac{24-j}{24} \}}^\sharp$ for the G2 family. As it only depends on $D$ not $j$, we will denote it by $f_{3,D}^\sharp.$
\subsection{Special \texorpdfstring{$L$}{L}-values of the \texorpdfstring{$\BK_2(r,s)$}{K2(r,s)} functions} \label{sec:K2-lvalues}
Below, we will relate the previous discussion to special $L$-values.  For a given weight-$k$ modular form, recall that
\begin{eqnarray}\label{eq:L(k)k=1,2}    
  L(f,1):=-2\pi i \int_0^{i\infty} f(t)dt,  \quad    L(f,2):=-4\pi^2 \int_0^{i\infty} f(t)tdt.
\end{eqnarray}
Also recall when $q_3>r_3$, 
\begin{equation*}
  \pPq32{r_1&r_2&r_3}{&q_2&q_3}{z} =\int_{0}^{1}t^{r_3-1}(1-t)^{q_3-r_3-1}   \pPq21{r_1&r_2}{&q_2}{tz}dt. 
\end{equation*}
Consequently, one has 
\begin{lemma}\label{lem:K-Lvalue}  
    Assume the notation as in Lemma \ref{lem:E-eta}, and let $N=N(r)$. Then
    \begin{eqnarray*}
        \pPq32{\frac 12&\frac 12&r}{&1&s}{1}
        &=& 2^{4r-1}N\pi\cdot L \left(\BK_2(r,s)( {N\tau}),1\right)\\
        &\overset{\eqref{eq:E-E*}}=& 2^{4s-4r-2}N^2 \cdot L\left(\BK_2(s-r,s)(N\tau),2\right).   
    \end{eqnarray*}
\end{lemma}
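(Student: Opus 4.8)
The plan is to start from the Euler-type integral representation for $_3P_2$ displayed just above the lemma, specialized to $\ba^\flat=\{\frac12,\frac12\}$, $\bbeta^\flat=\{1,1\}$, $r_3=r$, $q_3=s$, which gives
\[
\pPq32{\frac12&\frac12&r}{&1&s}{1}=\int_0^1 t^{r-1}(1-t)^{s-r-1}\pFq21{\frac12&\frac12}{&1}{t}\,dt.
\]
Next I would make the substitution $t=\lambda(\tau)$, i.e. parametrize the interval $(0,1)$ by the modular lambda function. As $\tau$ runs along the imaginary axis (with the appropriate orientation dictated by $\lambda(i\infty)=0$ and $\lambda$ increasing to $1$), $\lambda(\tau)$ sweeps out $(0,1)$, and $dt=\frac{d\lambda(\tau)}{dq}\,dq$ with $q=e^{2\pi i\tau}$. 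The integrand becomes exactly $\lambda(\tau)^{r-1}(1-\lambda(\tau))^{s-r-1}\,{}_2F_1(\tfrac12,\tfrac12;1;\lambda(\tau))\,\frac{d\lambda(\tau)}{dq}\,dq$, and pulling one factor of $\lambda(\tau)$ out of the power and into the measure we recognize, by \Cref{lem:E-eta}, precisely $2^{4r-1}\BK_2(r,s)(\tau)$ times $dq/q \cdot q = $ (after care with the $\frac{d\lambda}{\lambda\,dq}$ versus $\frac{d\lambda}{dq}$ bookkeeping) the quantity $2^{4r-1}\BK_2(r,s)(\tau)\,\frac{dq}{?}$. The key identity to invoke verbatim is \eqref{eq:alt-2-eta}.

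The second ingredient is to relate $\int \BK_2(r,s)(\tau)$, taken along the imaginary axis in the $\tau$-variable, to the $L$-value $L(\BK_2(r,s)(N\tau),1)$ via the definition \eqref{eq:L(k)k=1,2}; the level shift $\tau\mapsto N\tau$ and the corresponding rescaling of the path is where the factor $N\pi$ (rather than $2\pi i$) and the sign come out, once one writes $-2\pi i\int_0^{i\infty}\BK_2(r,s)(Nt)\,dt=-\frac{2\pi i}{N}\int_0^{i\infty}\BK_2(r,s)(u)\,du$ and compares with the $q$-integral above, using $dq=2\pi i\, q\, d\tau$. The first displayed equality of the lemma then follows by assembling the constants $2^{4r-1}$, $N$, $\pi$.

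For the second equality I would use the definitional relation \eqref{eq:E-E*}, namely $\BK_2^*(r,s)=\BK_2(s-r,s)$, together with the Fricke-type involution formula from \Cref{lem:K2-level}, $\BK_2(r,s)(-1/\tau)=2^{4s-8r}i\,\tau^3\BK_2^*(r,s)(\tau)$. Applying the standard functional-equation manipulation that converts $L(g,1)$ of a weight-three form under $\tau\mapsto -1/\tau$ into an $L(\cdot,2)$ of its Fricke image (i.e. splitting $\int_0^{i\infty}=\int_0^{i/\sqrt N}+\int_{i/\sqrt N}^{i\infty}$ and substituting $\tau\mapsto -1/(N\tau)$ in one piece, or more directly using that for a weight-$k$ form $g$ one has $L(g,1)$ and $L(g^*,k-1)$ proportional), and tracking the powers of $2$ and $N$, yields $2^{4r-1}N\pi\, L(\BK_2(r,s)(N\tau),1)=2^{4s-4r-2}N^2\,L(\BK_2(s-r,s)(N\tau),2)$, which is the claim.

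The main obstacle I anticipate is purely bookkeeping rather than conceptual: getting every power of $2$, every factor of $N$, and the sign exactly right in the two changes of variable — first the $t=\lambda(\tau)$ substitution with the $\frac{d\lambda}{\lambda\,dq}$ normalization baked into \eqref{eq:alt-2-eta}, and second the level-shift $\tau\mapsto N\tau$ together with the Fricke involution of \Cref{lem:K2-level} — and making sure the orientation of the path of integration (which endpoint of $(0,1)$ corresponds to $\tau=i\infty$) is consistent so that no spurious sign appears. One should also check convergence at the endpoints, which is where the hypotheses $0<r<s$ (ensuring $t^{r-1}$ and $(1-t)^{s-r-1}$ are integrable) and the cusp-form property of $\BK_2(r,s)$ guaranteed by membership in $\mathbb S_2$ are used; but given \Cref{lem:E-eta} and \Cref{lem:K2-level} are already available, these are routine.
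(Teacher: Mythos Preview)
Your approach is correct and essentially identical to the paper's: start from the Euler integral for ${}_3P_2$, substitute $t=\lambda(\tau)$ along the imaginary axis (using $\lambda(i\infty)=0$, $\lambda(0)=1$), invoke \eqref{eq:alt-2-eta} to recognize $2^{4r-1}\BK_2(r,s)(\tau)$, and convert the $\tau$-integral into $L(\cdot,1)$ via \eqref{eq:L(k)k=1,2}; then apply $\tau\mapsto -1/\tau$ (equivalently the Fricke relation of \Cref{lem:K2-level}, which the paper phrases directly via $\eta(-1/\tau)=\sqrt{\tau/i}\,\eta(\tau)$) to pass to $L(\BK_2(s-r,s),2)$, and finally rescale $\tau\mapsto N\tau$. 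One small slip in your write-up: the integrand in the displayed ${}_3P_2$ formula contains ${}_2P_1$, not ${}_2F_1$, so there is an extra factor $B(\tfrac12,\tfrac12)=\pi$ you dropped; this is exactly the $\pi$ you need later, so once you restore it the constants line up without further work.
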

\begin{proof}
    Recall that the modular lambda function $\lambda(\tau)$ satisfies that, $\l(0)=1$ and $\l(i\infty)=0$ in the provided fundamental domain of $\G(2)$. We see that
    \begin{eqnarray*}
        \pPq32{\frac 12&\frac 12&r}{&1&s}{1}&=&\pi \int_0^1 {\l}^{r}{(1-\l)}^{s-r-1}   \pFq21{\frac 12&\frac 12}{&1}{\l}\frac{d\l}{\l}\\
        &\overset{\eqref{eq:alt-2-eta}}=& -\pi^2 i \cdot   2^{4r} \cdot \int_0^{i \infty} \BK_2(r,s)(\tau) d \tau\\
        &=&\pi 2^{4r-1} \cdot L\left(\BK_2(r,s)(\tau),1\right)\\
        &\overset{\tau\mapsto-1/\tau}=&2^{4s-4r-2}\cdot L\left(\BK_2(s-r,s)(\tau),2\right),
    \end{eqnarray*} 
    in the last step we apply the identity $\eta(-1/\tau) =\sqrt{\tau/i} \, \eta(\tau)$.  Our claims follow from the substitution $\tau\mapsto N\tau$. 
\end{proof}
\subsection{The \texorpdfstring{$\BK_1$-functions}{K1-functions}} 
The idea behind Theorem \ref{thm:main} can be used more generally. For example, when $\ba=\{\frac12\},\bbeta=\{1\}$, for $(r,s)\in\Q^2$ and $t=\lambda(\tau)$, \Cref{eq:f} gives rise to a weight two modular form  
\begin{equation}\label{eq:K1}
     \mathbb{K}_1(r,s)(\tau)=2^{1-4r}\left(\frac{\lambda}{1-\l}\right)^{r}(1-\lambda)^{s-1/2}\theta_3^4(\tau),
\end{equation}
for some finite index subgroup of $\mathrm{SL}_2(\Z).$ These $\mathbb{K}_{1}(r,s)(\tau)$ functions have also been considered by Rosen \cite{Rosen-K1}. Using an argument similar to \Cref{lem:K-Lvalue}, one has 
\begin{equation}\label{eq:K1-L}     
    L(\mathbb{K}_1(r,s)(\tau),1)=2^{1-4r}B(r,s-r-1/2).
\end{equation} In terms of eta function,  
\[
    \mathbb{K}_1(r,s)(\tau)=\frac{\eta(\tau/2)^{16s-8r-16}\eta(2\tau)^{8r+8s-12}}{\eta(\tau)^{24s-32}}.
\]
From which we know $\mathbb{K}_1(r,s)(\tau)$ is a congruence cusp form if $(r,s)$ is in  
\[
  \mathbb S_1=\{(r,s)\in\Q^2\mid 0<r<s<3/2,\, s-r>1/2, 24s\in\Z, 8(r+s)\in\Z\}.
\]
\subsection{Proof of \texorpdfstring{\Cref{thm:L-Values}}{Theorem 1.2}}
\begin{table}[ht]
    \centering
    \begin{tblr}{
        colspec={|Q[c,m]|Q[c,m]|Q[l,m]|},
        cell{3,6,9}{1-2}={r=2}{c},
        rowsep=3pt,
        hlines,
        hspan=minimal,
        hline{1,Z}={1pt},
        vline{1,Z}={1pt},
    }
        $D$ &$N$& \SetVline[1]{1-10}{black,1pt} \SetCell{c} Hecke Eigenform $f_{3,D}^\sharp$ in terms of $\BK_2(r,s)(N\tau)$ \\
        \SetHline[1]{1-3}{black, 1pt}
        $24$ &48 & \SetCell{c} {$f_{1152.3.b.i} = \BK_{2}\left(\frac{1}{24}, \frac{23}{24}\right) - \beta_{1}\BK_{2}\left(\frac{5}{24},\frac{19}{24} \right) - \beta_{5} \BK_{2}\left(\frac{7}{24}, \frac{17}{24} \right) - \beta_{3} \BK_{2}\left(\frac{11}{24},\frac{13}{24}\right)$ \\[1mm] 
        $-\beta_{2}\BK_{2}\left(\frac{13}{24},\frac{35}{24} \right) - \beta_{4}\BK_{2}\left(\frac{17}{24}, \frac{31}{24} \right) - \beta_{7} \BK_{2}\left(\frac{19}{24}, \frac{29}{24} \right) - \beta_{6} \BK_{2}\left(\frac{23}{24}, \frac{25}{24} \right)$}\\
        $12$ &24 & $f_{288.3.g.a} = \BK_{2}\left(\frac{1}{12},\frac{11}{12}\right) - 4 \BK_{2}\left(\frac{5}{12},\frac{7}{12} \right) + 4i \BK_{2}\left(\frac{7}{12}, \frac{17}{12} \right) - 16i \BK_{2} \left(\frac{11}{12}, \frac{13}{12} \right)$ \\
        &  & $f_{288.3.g.c} = f_{288.3.g.a}\otimes \chi_3, \qquad \chi_3 = \left(\frac{3}{\cdot}\right)$  \\ 
        $8$&16 & $f_{128.3.d.c} = \BK_{2}\left(\frac{1}{8}, \frac{7}{8} \right) + 2\sqrt{2}\BK_{2}\left(\frac{3}{8}, \frac{5}{8}\right) + 4i \BK_{2}\left(\frac{5}{8}, \frac{11}{8} \right) + 8 \sqrt{2}i \BK_{2}\left(\frac{7}{8},\frac{9}{8}\right)$ \\
        $6$& 12 & $f_{36.3.d.a}(\tau)+2f_{36.3.d.a}(2\tau)=\BK_2\left(\frac16, \frac56 \right)(12 \tau) + 8 \BK_2 \left(\frac56, \frac76 \right)(12 \tau) $  \\
        & & $f_{36.3.d.b}(\tau)-2f_{36.3.d.b}(2\tau)=\BK_2\left(\frac16, \frac56 \right)(12 \tau) - 8 \BK_2 \left(\frac56, \frac76 \right)(12 \tau) $ \\
        $4$&8 & $f_{32.3.c.a} = \BK_{2}\left(\frac{1}{4},\frac{3}{4}\right) + 4i \BK_{2}\left(\frac{3}{4},\frac{5}{4} \right)$ \\
        $3$&6  & $f_{36.3.d.a} = \BK_{2} \left(\frac{1}{3},\frac{2}{3} \right) - 2 \BK_{2} \left(\frac{2}{3},\, \frac{4}{3} \right)$ \\
        & &  $f_{36.3.d.b} = 
        f_{36.3.d.a}\otimes \chi_{-3}= f_{36.3.d.a}\otimes \chi_{3},\qquad \chi_{-3} = \left(\frac{-3}{\cdot}\right), \, \chi_{3} = \left(\frac{3}{\cdot}\right)$ \\
    \end{tblr}
    \caption{The $f^\sharp_{3,D}$ functions from G2 class.  In the above {\footnotesize $
    \beta_{1}=2i\sqrt{7},\, \beta_2=4i,\, \beta_3=4\sqrt2,\, \beta_4=8\sqrt 7, \, \beta_{5}=2i\sqrt{14},\, \beta_6=-16i\sqrt{2},\, \beta_7=8\sqrt{14},
    $} \\[2mm]
    and 
    $
        \BK_2(r,s)(\tau)= \frac{\eta \left(\tau/2 \right)^{16s-8r-12}\eta(2 \tau)^{8s+8r-12}}{\eta(\tau)^{24s-30}}
    $.}  
    \label{tab:G2-eigenforms}
\end{table}
\begin{table}[ht]
    \centering
    \begin{tblr}{
        colspec={|Q[c,m]|Q[c,m]|Q[l,m]|Q[c,m]|},
        rowsep=2pt,
        hlines,
        hspan=minimal,
        hline{1,Z}={1pt},
        vline{1,Z}={1pt},
    }
    $D$&$N$ & \SetCell{c} Hecke Eigenform $f_{2,D}^\sharp$   in terms of $\BK_1(r,s)(N\tau)$& CM \\ 
    \SetHline[1]{1-3}{black, 1pt}  
    24 &48& {$f_{1152.2.d.g}={\mathbb{K}_1(\frac1{24}, \frac{35}{24}) + 2\sqrt{3}i\, \mathbb{K}_1(\frac5{24}, \frac{31}{24})}$ \\ $+2 \sqrt{6} \,\mathbb{K}_1(\frac7{24}, \frac{29}{24}) - 4 \sqrt{2}i \,\mathbb{K}_1(\frac{11}{24}, \frac{25}{24})$} & $-24$ \\
     12 &24& $f_{288.2.a.a}=\mathbb{K}_1(\frac1{12}, \frac{17}{12}) - 4 \,\mathbb{K}_1(\frac{5}{12}, \frac{13}{12})$  & $-4$ \\
      8 &16& $f_{128.2.b.a}=\mathbb{K}_1(\frac18, \frac{11}8) + 2 \sqrt{2}i \,\mathbb{K}_1(\frac 38, \frac98)$ & $-8$ \\ 
    6 &6& $f_{36.2.a.a}=\mathbb{K}_1(\frac13,\frac76)$ & $-3$ \\
     4 & 8& $f_ {32.2.a.a}=\mathbb{K}_1(\frac14, \frac54) $ & $-4$ \\
     3 & 12&  $f_ {36.2.a.a}=\mathbb{K}_1(\frac16, \frac43) $ & $-3$ \\
    \end{tblr}
    \caption{ The $f_{2,D}^\sharp$ functions. $ \mathbb{K}_1(r,s)(\tau)=\frac{\eta(\tau/2)^{16s-8r-16}\eta(2\tau)^{8r+8s-12}}{\eta(\tau)^{24s-32}}$}
    \label{tab:f2D}
\end{table} 
The $f_{2,D}^\sharp$ functions have been computed by Esme Rosen in \cite{Rosen-K1} in terms of linear combinations of the $\BK_1(r,s)$ functions. They are listed in \Cref{tab:f2D}, in which we fix an embedding that $i^2=-1$.
\section{Four dimensional hypergeometric Galois representations}\label{sec:4dimGalois}
\subsection{Some well-poised formulae over \texorpdfstring{$\C$}{C} and \texorpdfstring{$\fq$}{Fq}}
Given a hypergeometric system in variable $\l$, the  variable change $\l\mapsto 1/\l$ will lead to another hypergeometric system playing a ``dual" role. See for example \cite{Beukers-Frederic} by Beukers--Jouhet. Note that well-poised hypergeometric local systems here are self-dual and hence the involution 
\begin{equation}\label{eq:iota}
    \mathfrak{i}: \l\mapsto 1/\l,
\end{equation} 
fixing $\l=\pm 1$, plays an important role  of the hypergeometric arithmetic  (cf. Proposition 2 of \cite{LLT2}).  For the cases in this paper, it can be realized by the map \eqref{eq:iota-map} acting on \eqref{eq:hgv}.  In \cite{Whipple25}, Whipple investigated well-poised series and very well-posed series at $\pm 1$ and derived a few reduction formulae. For this paper, putting two of them together give a more complete picture of \Cref{thm:mainclassical}. To make the notation compact, we use $r_{i}^{*} = 1 + r_{1} - r_{i}$. 
\begin{proposition}[Whipple]\label{prop:Whipple}
    Let $r_i\in\C$ such that the convergent conditions are satisfied. Then
    \begin{equation}\label{eq:Whipple-4F3&companion}
        \begin{split}
            \pFq43{r_1&r_2&r_3&r_4}{&r_2^*&r_3^*&r_4^*}{-1}&=C(r_i) \cdot \pFq32{1+\frac{r_1}2-r_2&r_3&r_4}{&1+\frac{r_1}2&r_2^*}1
            \\
            \pFq{5}{4}{r_1 & 1+\frac{r_1}{2}  & r_2 & r_3 & r_4}{& \frac{r_1}{2}  & r_2^* & r_3^* & r_4^*}{-1} 
        &=C(r_i)\cdot \pFq{3}{2}{\frac{1+r_1}2-r_2 & r_3 & r_4}{& \frac{1+r_1}2 & r_2^*}{1},
        \end{split}
    \end{equation}
    where $$C(r_i)=\frac{\G(r_3^*)\G(r_4^*)}{\G(1+r_1)\G(r_4^*-r_3)}.$$
\end{proposition}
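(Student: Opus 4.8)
The plan is to obtain both identities in \eqref{eq:Whipple-4F3&companion} as specializations/corollaries of classical Whipple-type transformations for well-poised and very well-poised hypergeometric series, which are standard and can be found in \cite{AAR,Whipple25}. For the first identity, I would start from Whipple's transformation expressing a well-poised $_4F_3(-1)$ in terms of a $_3F_2(1)$; in the notation where the bottom parameters are $r_i^* = 1+r_1-r_i$, the well-poisedness is exactly the statement $r_i + r_i^* = 1+r_1$ for all $i$, so the datum $\HD_4(j/12)$ in \eqref{eq:HD4-defn} with $r_1 = r_2 = j/12$, $r_3 = r_4 = 1/2$ (reindexed so that $r_1$ plays the distinguished role) fits this template after matching $1+r_1 = 1/2 + j/12$. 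The key computation is to verify that the argument of the resulting $_3F_2$ collapses to $1$ and that the Gamma-factor prefactor is precisely $C(r_i) = \G(r_3^*,r_4^*)/\G(1+r_1,r_4^*-r_3)$; this is a routine bookkeeping check comparing with the classical normalization, using the $\G$-shorthand \eqref{eq:shorthand-quotient}.

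For the second identity, the cleanest route is to recognize the left-hand side as a \emph{very} well-poised $_5F_4(-1)$, obtained from the $_4F_3$ by inserting the pair of parameters $1+r_1/2$ (top) and $r_1/2$ (bottom), exactly as described in the text after \eqref{eq:HD5-defn}. Very well-poised $_5F_4$ at $-1$ has its own classical reduction (again due to Whipple, see \cite{Whipple25}), and the point is that this insertion does \emph{not} change the right-hand side of the first identity except to shift the top parameter $1+\frac{r_1}{2}-r_2$ of the $_3F_2$ by the relation $\frac{1+r_1}{2}-r_2$ versus $1+\frac{r_1}{2}-r_2$ — i.e., the two right-hand sides differ only in whether $\frac{r_1}{2}$ or $\frac{1+r_1}{2}$ appears, reflecting the extra $\tfrac12$ coming from the very-well-poised column. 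So I would prove the two identities in parallel, tracking how the inserted column propagates through the contiguous/Euler-integral reduction.

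Concretely, the mechanism I would use for both is the Euler integral representation \eqref{eq:inductive}–\eqref{eq:FtoP}: write the $_4F_3(-1)$ (resp. $_5F_4(-1)$) as an iterated integral, peel off the $r_4$-integration to reduce to a $_3F_2$, apply a Pfaff/Euler transformation inside to convert the argument from $-t$ to something that integrates cleanly at the endpoint, and then recognize the residual integral as a Beta integral producing the $C(r_i)$ factor together with a terminating-to-$1$ $_3F_2$. The well-poised symmetry $\mathfrak{i}:\l\mapsto 1/\l$ fixing $\l=-1$, invoked in \eqref{eq:iota}, is what guarantees the relevant contiguous relations close up; this is really the conceptual input, while the analytic manipulation is the classical Whipple argument. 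I would cite \cite[\S2, \S3]{Whipple25} (or the corresponding entries in \cite{AAR}) for the underlying transformation and present the proof as the specialization plus the prefactor verification.

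The main obstacle I anticipate is purely notational rather than mathematical: reconciling the ``$r_i^*$'' convention used here with the various normalizations in the classical literature (different sources place the distinguished parameter in different slots and absorb different Gamma factors), and in particular pinning down the exact form of $C(r_i)$ — making sure no stray factor of $\G(r_1)$, $\G(r_2^*)$, or a sign is dropped when passing between the $_nF_{n-1}$ and $_nP_{n-1}$ normalizations via \eqref{eq:FtoP}. Since $C(r_i)$ enters multiplicatively into \Cref{thm:mainclassical} and ultimately into the constant $\Omega_{j,\C}$, this constant must be gotten exactly right, so I would double-check it against the $j=6$ (i.e. $r_1=1/2$, all $r_i=1/2$) special case in \eqref{eq:untruncated_version}, where everything is known explicitly.
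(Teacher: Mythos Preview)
Your proposal is essentially correct in spirit---both identities are indeed classical Whipple results and citing \cite{Whipple25} with a specialization is the right move---but the paper's proof is considerably more economical and uses a slightly different mechanism for the second identity than the one you sketch.

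For the first identity the paper simply cites equation (3.4) of \cite{Whipple25}, as you propose. For the second, however, rather than invoking a direct very-well-poised $_5F_4(-1)$ reduction or working through Euler integrals, the paper quotes Whipple's $_6F_5(-1)$ transformation (equation (6.3) of \cite{Whipple25}):
\[
\pFq{6}{5}{r_1 & 1+\frac{r_1}{2} & r_5 & r_2 & r_3 & r_4}{& \frac{r_1}{2} & r_5^* & r_2^* & r_3^* & r_4^*}{-1}
= C(r_i)\cdot \pFq{3}{2}{r_5^*-r_2 & r_3 & r_4}{& r_5^* & r_2^*}{1},
\]
and then observes that setting $r_5=\tfrac{1+r_1}{2}$ forces $r_5^*=1+r_1-r_5=r_5$, so the pair $(r_5,r_5^*)$ cancels from the left-hand side, collapsing the $_6F_5$ to the desired $_5F_4$, while on the right $r_5^*-r_2$ becomes exactly $\tfrac{1+r_1}{2}-r_2$. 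Your Euler-integral approach would work and has the advantage of being self-contained, but the paper's route avoids all of the Pfaff/Beta bookkeeping you anticipated and makes the relationship between the two identities (the shift from $1+\tfrac{r_1}{2}$ to $\tfrac{1+r_1}{2}$) transparent: both are shadows of the same $_6F_5$ formula, one at $r_5=\tfrac{r_1}{2}+1$ implicitly and one at $r_5=\tfrac{1+r_1}{2}$. No separate prefactor verification is needed since $C(r_i)$ is already the coefficient in the $_6F_5$ identity.
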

Note that the second datum $\left\{\substack{r_1,\,1+\frac{r_1}2,\,r_2,\,r_3,\,r_4\\ 1, \,\,\, \,\frac{r_1}2,\,\,\,r_2^*,\,r_3^*,\,r_4^* }\right\}$  on the left hand side is imprimitive.
\begin{proof}
    The first one is (3.4) of \cite{Whipple25}.  To see the second one, we use (6.3) of \cite{Whipple25}, which is restated as
    \begin{singnumalign}\label{eq:Whipple-6F5}
        &\pFq{6}{5}{r_1 & 1+\frac{r_1}{2} & r_5 & r_2 & r_3 & r_4}{& \frac{r_1}{2} & r_5^* & r_2^* & r_3^* & r_4^*}{-1} \\
        &\hspace{1mm}= \frac{\G(r_3^*)\G(r_4^*)}{\G(1+r_1)\G(r_4^*-r_3)}\pFq{3}{2}{{r_5^*-r_2} & r_3 & r_4}{& r_5^* & r_2^*}{1}.
    \end{singnumalign} 
    When $r_5=\frac{1+r_1}2$, $r_5^*=r_5$ and hence the $_6F_5(-1)$ becomes the desired $_5F_4(-1)$.
\end{proof}
In comparison, we recall a  well-poised formula of McCarthy over finite fields. The notation $\chi = \square$ is used to indicate the character $\chi$ is a square.
\begin{theorem}[McCarthy \cite{McCarthy}]\label{thm:wp}
    Let $r_{1},r_{2},r_{3},r_{4}\in \Q$ such that all of the following data are primitive and $M$ is the least positive common denominator of the $r_{i}$'s. Let $q \equiv 1 \pmod{M}$ be an odd prime power. Then the following $H_{q}$ values are 0 if $\omega^{(q-1)r_{1}} \neq \square$ and when $\omega^{(q-1)r_{1}} = \square$,
    \begin{equation}\label{eq:McCarthy-4F3}
        H_{q}\left(\begin{matrix} r_{1}&r_{2}&r_{3}&r_{4}  \smallskip \\  1&r_{2}^*&r_{3}^*&r_{4}^* \end{matrix} \; ; \; -1 \right)=\displaystyle{\g_{\omega}\bigg(\frac{-r_{1},r_{3}-r_{4}^*}{-r_{3}^*,-r_{4}^*} \bigg) \sum_{ s\in\left\{ \frac{r_{1}}2,\frac{r_{1}+1}2\right\}} H_{q}\left(\begin{matrix} s-r_{2}&r_{3}&r_{4}  \smallskip \\  1&s&r_{2}^* \end{matrix} \; ; \; {1} \right)}.
    \end{equation}
\end{theorem}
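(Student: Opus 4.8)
The plan is to reprove this finite-field reduction as a Gauss-sum identity that mirrors, term by term, the classical Whipple manipulation behind \Cref{prop:Whipple}, with the classical quadratic transformation replaced by the Hasse--Davenport product relation \cite{Berndt-Evans-Williams}. Write $A\colonequals\iota_\wp(r_1)=\omega^{(q-1)r_1}$ for the \emph{well-poised weight}; the hypothesis $r_i+r_i^*=1+r_1$ becomes $R_iQ_i=A$ for every column, which is the finite-field incarnation of the self-duality realized by the involution $\mathfrak i\colon\l\mapsto 1/\l$ of \eqref{eq:iota} (geometrically, by \eqref{eq:iota-map}). First I would expand the left side via \eqref{eq:P-H}, \eqref{eq:PP} and \eqref{eq:calJ} into a single sum $\tfrac{1}{q-1}\sum_{\chi\in\widehat{\F_q^\times}}\big(\prod_{i=1}^4\CC{R_i\chi}{Q_i\chi}\big)\chi(-1)$, up to the explicit normalizing constants of \eqref{eq:P-H} and \eqref{eq:PP}, and then convert each binomial to Gauss sums through $\CC AB=-B(-1)J(A,\ol B)$ and $J(A,\ol B)=\g_\omega(\tfrac{\cdots}{\cdots})$.

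The vanishing assertion follows from letting $\mathfrak i$ act on the summation index by $\chi\mapsto\ol A\,\ol\chi$. Since $R_iQ_i=A$, one has $R_i(\ol A\ol\chi)=\ol{Q_i\chi}$ and $Q_i(\ol A\ol\chi)=\ol{R_i\chi}$, so the symmetry $J(A,\ol B)=J(\ol B,A)$ gives $\CC{\ol{Q_i\chi}}{\ol{R_i\chi}}=(R_iQ_i)(-1)\CC{R_i\chi}{Q_i\chi}=A(-1)\CC{R_i\chi}{Q_i\chi}$ for each of the four columns. Together with $\chi\mapsto\ol A\ol\chi$ sending $\chi(-1)$ to $A(-1)\chi(-1)$, the summand is multiplied by the odd power $A(-1)^{5}=A(-1)$ under a bijection of $\widehat{\F_q^\times}$; hence the whole sum $S$ satisfies $S=A(-1)S$. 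Because $A(-1)=(-1)^{(q-1)r_1}$ equals $1$ exactly when $A=\square$, this forces $H_q=0$ whenever $\omega^{(q-1)r_1}\neq\square$.

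Assume now $A=\square$, say $A=B^2$. Its two square-root characters are $B=\iota_\wp(r_1/2)$ and $B\phi=\iota_\wp(\tfrac{r_1}2+\tfrac12)=\iota_\wp(\tfrac{r_1+1}2)$, precisely the two values $s\in\{r_1/2,(r_1+1)/2\}$ on the right. The crucial move is to apply the quadratic Hasse--Davenport relation
\[
\g_\omega(a)\,\g_\omega\!\left(a+\tfrac12\right)=\iota_\wp(-2a)(2)\,\g_\omega\!\left(\tfrac12\right)\g_\omega(2a)
\]
to the column-$1$ Gauss sums, which lets one express the doubled argument attached to $A\chi$ as a product over a square-root character. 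Reindexing by that square root simultaneously absorbs the factor $\chi(-1)$ (which becomes the value of a character at a square, hence $1$) — the finite-field analogue of the transformation carrying $\l=-1$ to $\l=1$ — and splits the sum according to the two square roots $B$ and $B\phi$ of $A$. Each piece reassembles into the character sum $H_q$ attached to $\big\{\substack{s-r_2,\,r_3,\,r_4\\ 1,\,s,\,r_2^*}\big\}$ at $\l=1$, with $\iota_\wp(s)$ the relevant square root, yielding precisely the two-term sum on the right.

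I expect the main difficulty to be the bookkeeping in this last step: the Hasse--Davenport substitution has to be reconciled with the normalization $\mathcal J(\HD;\wp)$ of \eqref{eq:calJ}, the binomial signs $-B(-1)$, the prefactor $\tfrac{(-1)^n}{q-1}\prod_{i\ge2}R_iQ_i(-1)$ of \eqref{eq:PP}, and the stray powers of $2$ introduced by the doubling, so that the residual Gauss sums collapse — after repeated use of the reflection formula $\g_\omega(a)\g_\omega(-a)=\iota_\wp(a)(-1)\,q$ — onto the stated prefactor $\g_\omega(\tfrac{-r_1,\,r_3-r_4^*}{-r_3^*,\,-r_4^*})$ with no spurious factors, and so that the two square-root contributions are genuinely added rather than averaged. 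Everything else is a faithful transcription of Whipple's classical argument into the language of Gauss sums.
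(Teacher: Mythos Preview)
The paper does not prove this statement. \Cref{thm:wp} is quoted without proof from McCarthy's paper \cite{McCarthy} and used as a black box in the proof of \Cref{thm:mainGalois}; there is no argument in the present paper to compare your proposal against.

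That said, a brief comment on your sketch. Your vanishing argument is sound: under $\chi\mapsto\ol A\,\ol\chi$ the four binomial factors each pick up $A(-1)$ and the $\chi(-1)$ contributes one more, so the sum satisfies $S=A(-1)^5S=A(-1)S$; and for $q$ odd one indeed has $A(-1)=1$ if and only if $A$ is a square in $\widehat{\F_q^\times}$, so this gives the claimed vanishing. For the main identity, your plan to use the Hasse--Davenport quadratic relation to split the sum over the two square roots of $A$ is exactly the mechanism McCarthy uses, and is the correct finite-field analogue of the doubling step in Whipple's classical proof. You are right that the substance of the work is the bookkeeping you defer: matching the $\mathcal J(\HD;\wp)$ normalizations on both sides, tracking the sign $(-1)^n$ and the $R_iQ_i(-1)$ prefactors through the change of length from $n=4$ to $n=3$, and collapsing the residual Gauss sums to the stated $\g_\omega\big(\tfrac{-r_1,\,r_3-r_4^*}{-r_3^*,\,-r_4^*}\big)$. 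None of this is conceptually difficult, but it is where a proof can silently go wrong, and your proposal stops short of executing it.
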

In this formula, the right hand side contains two summands.  Comparing \eqref{eq:Whipple-4F3&companion} and \eqref{eq:McCarthy-4F3} closely, we see that not only  $\frac{\G(r_3^*)\G(r_4^*)}{\G(1+r_1)\G(r_4^*-r_3)}$ and $\g_{\omega}\bigg(\frac{-r_{1},r_{3}-r_{4}^*}{-r_{3}^*,-r_{4}^*} \bigg)$ match well up to applying reflection formulae, but also the two choices of $s$ correspond to the two $_3F_2(1)$'s on the left-hand sides of \eqref{eq:Whipple-4F3&companion} respectively. 
\subsection{Length four data from \texorpdfstring{$\BK_2$}{K2}-functions}
Letting 
\begin{equation}\label{eq:ri-specialized}
    r_1=r_2=r,\quad r_3=r_4=\frac12,
\end{equation} 
the right hand sides of  \Cref{prop:Whipple} are related to the  $\BK_2$-functions. In this case
\begin{equation}\label{eq:C-specialized}
    C(r)= \frac 1r\G\left(\frac{r+\frac12,r+\frac12}{r,r}\right). 
\end{equation}
\begin{remark}
For the hypergeometric differential equation associated to $\HD_4(r)$ in the variable $z$, when $r\neq 1/2$, $F(\HD_4(r);z)$ and $z^{1/2-r}F(\HD_4(1-r);z)$ are two independent solutions near 0. 
  
Like the discussion in \cite{WIN3a}, the local system of $\HD_4(r)$ at $\l=-1$  consisting of differential 3-forms of first and second kind on the variety \eqref{eq:hgv}. When $\l=-1$, these differentials admit the induced action of $ {\mathfrak {i}}$ given by \eqref{eq:iota-map}. For example, for $\omega=\frac{\mathrm{d} x_1\mathrm{d}x_2\mathrm{d}x_3}{y}$ then $ {\mathfrak {i}}^*\omega=u\omega$, for some root of unity in $\Z[\zeta_{24}]$. However, for differentials such as $\tilde{\omega}=(x_1-1/x_1)(x_2-1/x_2)(x_3-1/x_3)\omega$, $ {\mathfrak {i}}^*\tilde\omega=-u\tilde\omega$. %
For the arithmetic of $\HD_4(r)$ at $-1$,  $\BK_2(1-\frac r2,1+\frac r2)$ will play a crucial  role. Under the specialization \Cref{eq:ri-specialized}, the extra factor of $C(r)$ as above can be absorbed if we write the left hand side in the $P$-function notation  in  which corresponds  the order of elements in $\ba,\bbeta$ matters. 
\end{remark}
\subsection{Proof of \texorpdfstring{\Cref{thm:mainclassical}}{Theorem 1.1}}We now prove the first claim of this paper.
\begin{lemma}\label{lem:BetaProduct}
    For any real $r$,
    \begin{equation}\label{eq:4.7}
        B\left(1-\frac r2,r\right)B\left(\frac{1-r}2,r\right)=\frac{2^{2r}\pi}{r \sin(\pi r)} 
    \end{equation}
    when both hand sides are convergent.
\end{lemma}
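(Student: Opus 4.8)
The plan is to reduce the identity to the classical Beta--Gamma dictionary and then to two applications of Euler's reflection formula. First I would rewrite each Beta factor in terms of Gamma functions using $B(x,y)=\G(x)\G(y)/\G(x+y)$. This gives
\[
B\left(1-\tfrac r2,r\right)B\left(\tfrac{1-r}2,r\right)=\frac{\G\!\left(1-\tfrac r2\right)\G(r)}{\G\!\left(1+\tfrac r2\right)}\cdot\frac{\G\!\left(\tfrac{1-r}2\right)\G(r)}{\G\!\left(\tfrac{1+r}2\right)}.
\]
Since $\G(1+\tfrac r2)=\tfrac r2\,\G(\tfrac r2)$, and similarly I can try to pair $\G(1-\tfrac r2)$ with $\G(\tfrac r2)$ via reflection: $\G(\tfrac r2)\G(1-\tfrac r2)=\pi/\sin(\pi r/2)$. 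That disposes of the first factor up to the $\G(r)$ and the $r/2$ in the denominator.

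Next I would handle the second factor. The pair $\G(\tfrac{1-r}2)\G(\tfrac{1+r}2)$ is exactly of reflection type with argument $\tfrac{1+r}2$: writing $1-\tfrac{1+r}2=\tfrac{1-r}2$, reflection gives $\G(\tfrac{1-r}2)\G(\tfrac{1+r}2)=\pi/\sin\!\big(\pi(1+r)/2\big)=\pi/\cos(\pi r/2)$. So after these substitutions the right-hand side becomes
\[
\frac{2}{r}\cdot\frac{\pi}{\sin(\pi r/2)}\cdot\frac{\G(r)^2}{\pi/\cos(\pi r/2)}\cdot\frac{1}{\G(\tfrac r2)\,\G(\tfrac{1+r}2)}
\]
after I re-track which Gammas remain; the surviving factor $\G(r)^2/(\G(\tfrac r2)\G(\tfrac{1+r}2))$ is then collapsed by the Legendre (Gauss) duplication formula $\G(r)=\tfrac{2^{\,r-1}}{\sqrt\pi}\,\G(\tfrac r2)\G(\tfrac{r+1}{2})$, which contributes the $2^{2r}$ and cleans up the $\sqrt\pi$'s. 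Combining $\sin(\pi r/2)\cos(\pi r/2)=\tfrac12\sin(\pi r)$ produces the claimed $2^{2r}\pi/(r\sin\pi r)$.

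The only real bookkeeping obstacle is making sure the pairings of Gamma arguments for reflection are chosen consistently — there are two natural ways to pair the four numerator/denominator Gammas, and only one routing makes both reflections and the duplication formula apply cleanly; I would fix the routing as above ($\G(\tfrac r2)$ with $\G(1-\tfrac r2)$, and $\G(\tfrac{1-r}2)$ with $\G(\tfrac{1+r}2)$, leaving $\G(r)^2$ for duplication). The convergence caveat in the statement just ensures all these Gamma values are finite (no poles), i.e. $r\notin 2\Z$ and $r\notin 1+2\Z$ and $r\neq 0$, under which every step is a genuine identity of complex numbers; no analytic continuation argument is needed beyond noting both sides are meromorphic and agree where defined.
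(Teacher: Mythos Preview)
Your proposal is correct and follows essentially the same approach as the paper: expand the Beta functions via $B(x,y)=\G(x)\G(y)/\G(x+y)$, then simplify the resulting Gamma quotient using the functional equation, Euler reflection, and the Legendre duplication formula. The only cosmetic difference is the routing---the paper collapses $\G(r)\G(-r)$ directly via reflection rather than using two half-angle reflections and the identity $\sin(\pi r/2)\cos(\pi r/2)=\tfrac12\sin(\pi r)$---but the toolkit and the level of the argument are identical.
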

\begin{proof}
    The standard properties of gamma and beta functions from \cite[Section 2]{Win3X} give
    \begin{equation*}
        \begin{split}
            B\left(1-\frac{r}{2},r\right)B\left(\frac{1-r}{2},r\right) &= \frac{\Gamma(1-\frac{r}{2})\Gamma(r)\Gamma(\frac{1-r}{2})\Gamma(r)}{\Gamma\left(\frac{1+r}{2}\right)\Gamma\left(1+\frac{r}{2}\right)} \\&= 2^{2r} \frac{\Gamma(r)\Gamma(-r)\Gamma\left(\frac{r}{2}\right)\Gamma\left(1-\frac{r}{2}\right)}{\Gamma\left(-\frac{r}{2}\right)\Gamma\left(1+\frac{r}{2}\right)}= \frac{2^{2r}\pi}{r \sin(\pi r)}.
        \end{split}
    \end{equation*}
\end{proof}
\begin{proof}[Proof of \Cref{thm:mainclassical}] 
    Below we will use $\displaystyle r=\frac{j}{12}.$  Let
    \begin{equation}\label{eq:K&E}
        K(r)(\tau)=\BK_1\left(\frac{1-r}2,1+\frac r2\right)(\tau), \quad E(r)(\tau)=\BK_1\left(\frac{-r}2,\frac{3+r} 2\right)(\tau),
    \end{equation}
    where $\BK_1(r,q)(\tau)$ is given in \eqref{eq:K1}. As $0<r<1/2$, $K(r)(\tau)$ is a holomorphic weight two cuspform while $E(r)(\tau)$ is not holomorphic. By  \Cref{eq:FtoP} 
    \begin{multline*}
        P(\HD_4(r);-1)=B(r,1-r) B(1/2,r)^2 F(\HD_4(r);-1)\\
        \overset{\eqref{eq:Whipple-4F3&companion}}=\frac{B(r,1-r)B(1/2,r)^2C(r)}{B(1/2,1/2)}\frac1{B(1-r/2,r)}\pPq32{\frac12&\frac12&1-\frac{r}2}{&1+\frac{r}2&1}1.
    \end{multline*}
    Applying \eqref{eq:C-specialized}, \eqref{eq:4.7}, and properties of gamma and beta functions, we get
    \[
        P(\HD_4(r);-1)=\frac{1}{2^{2r}} B\left(\frac{1-r}2,r\right)\cdot \pPq32{\frac12&\frac12&1-\frac{r}2}{&1&1+\frac{r}2}1.
    \]
    By \Cref{eq:K1-L} and \Cref{lem:K-Lvalue}, it agrees with
    \[
        2^{6r-2}\pi \cdot L\left(\BK_1\left(\frac{1-r}2,1+\frac r2\right)(\tau),1\right)\cdot L\left(\BK_2\left(1-\frac{r}2,1+\frac r2\right)(\tau),1\right).
    \]
    Similarly,
    \begin{equation*}
        \begin{split}    
            &\left(1+ \frac 2r \cdot z\frac{d}{dz} \right) \pPq{4}{3}{r&r&\frac{1}{2}&\frac{1}{2}}{&1&r+\frac{1}{2}&r+\frac{1}{2}}{-1}\\ 
            &=B(r,1-r)B(r,1/2)^2\pFq{5}{4}{r&1+\frac r2 &r&\frac{1}{2}&\frac{1}{2}}{&\frac r2&1&r+\frac{1}{2}&r+\frac{1}{2}}{-1}.
        \end{split}
    \end{equation*}
    Thus by \eqref{eq:Whipple-4F3&companion}, the left hand side equals
    \begin{equation*}
        \begin{split} 
            &-\frac{1}{2^{2r}} B\left(\frac{-r}2,r\right)\cdot \pPq32{\frac12&\frac12&\frac{1-r}2}{&1&\frac{1+r}2}1\\
            &= - {2^{6r-2}} \pi \cdot L\left(\BK_1\left(\frac{-r}2,\frac{3+r} 2\right)(\tau),1\right)\cdot L\left(\BK_2\left(\frac{1-r}2,\frac {1+r}2\right)(\tau),1\right).
        \end{split}
    \end{equation*}
\end{proof}
\subsection{Proof of  \texorpdfstring{\Cref{thm:mainGalois}}{Theorem 1.4}}  
The finite field analogue of \Cref{lem:BetaProduct} is as follows. 
\begin{lemma}\label{lem:JacProduct}
    Let $r\in \Q$ and $D$ be the denominator of $r/2$ and $q \equiv 1 \pmod{D}$ be an odd prime power. Then
    \begin{equation}\label{eq:4.9}
        J_\omega \left(1-\frac r2,r\right)J_\omega\left(\frac{1-r}2,r\right)=q \, \omega^{(q-1)r}(4)
    \end{equation}
\end{lemma}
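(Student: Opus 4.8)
The plan is to mirror the classical computation in \Cref{lem:BetaProduct} step by step, using the standard finite-field dictionary: Beta functions become Jacobi sums, $\G$ becomes the Gauss sum $\g_\omega$, the reflection formula $\G(x)\G(1-x)=\pi/\sin(\pi x)$ becomes $\g_\omega(a)\g_\omega(-a)=\omega^{(q-1)a}(-1)\,q$ (valid when $\omega^{(q-1)a}$ is nontrivial), and the Legendre duplication formula becomes the Hasse--Davenport product relation $\g_\omega(a)\g_\omega\!\left(a+\tfrac12\right)=\omega^{(q-1)a}(4)^{-1}\,\g_\omega(2a)\,\g_\omega\!\left(\tfrac12\right)$; both of the latter can be taken from \cite{Berndt-Evans-Williams} or \cite[Section 2]{Win3X}. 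Recall that $\g_\omega$ is periodic modulo $1$ in its argument, and note that for the values of $r$ occurring in this paper (where $0<r<1$) all of the characters $\omega^{(q-1)r}$, $\omega^{(q-1)r/2}$, $\omega^{(q-1)(1\pm r)/2}$ are nontrivial, so that the manipulations below are legitimate and the Jacobi--Gauss conversion applies; this genericity plays the role of the convergence caveat in \Cref{lem:BetaProduct}.

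First I would rewrite each Jacobi sum as a quotient of Gauss sums via $J_\omega(a,b)=\g_\omega\!\left(\frac{a,\,b}{a+b}\right)$ and reduce the arguments modulo $1$, which gives
\[
    J_\omega\!\left(1-\tfrac r2,r\right)J_\omega\!\left(\tfrac{1-r}2,r\right)=\frac{\g_\omega\!\left(-\tfrac r2\right)\,\g_\omega\!\left(\tfrac{1-r}2\right)}{\g_\omega\!\left(\tfrac r2\right)\,\g_\omega\!\left(\tfrac{1+r}2\right)}\;\g_\omega(r)^2 .
\]
Next I would apply the Hasse--Davenport relation twice, once with $a=-\tfrac r2$ to the numerator and once with $a=\tfrac r2$ to the denominator. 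The two factors $\g_\omega\!\left(\tfrac12\right)$ cancel, the power-of-$4$ twists multiply to $\big(\omega^{(q-1)r/2}(4)\big)^2=\omega^{(q-1)r}(4)$, and the residual Gauss sums reorganize (using the leftover $\g_\omega(r)^2$) into $\g_\omega(r)\g_\omega(-r)$; thus
\[
    J_\omega\!\left(1-\tfrac r2,r\right)J_\omega\!\left(\tfrac{1-r}2,r\right)=\omega^{(q-1)r}(4)\,\g_\omega(r)\g_\omega(-r)=\omega^{(q-1)r}(4)\,\omega^{(q-1)r}(-1)\,q=\omega^{(q-1)r}(-4)\,q ,
\]
the last equality using the reflection formula.

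The one delicate point — the finite-field shadow of the place where the classical proof invoked duplication to land on $\sin(\pi r)$ rather than on a spurious sign — is to see that $\omega^{(q-1)r}(-1)=1$, which turns $\omega^{(q-1)r}(-4)$ into the claimed $\omega^{(q-1)r}(4)$. This is exactly where the hypothesis that $D$ is the denominator of $r/2$ (not merely of $r$) is used: since $D\mid q-1$, the rational number $(q-1)(r/2)$ is an integer, hence $(q-1)r=2\,(q-1)(r/2)$ is even, and therefore $\omega^{(q-1)r}(-1)=\omega(-1)^{(q-1)r}=(-1)^{(q-1)r}=1$. I expect the main bookkeeping hazard to be pinning down the exact power-of-$2$ twist in the Hasse--Davenport relation so that it is consistent with the paper's normalization of $\g_\omega$; I would double-check this against \cite{Berndt-Evans-Williams} before finalizing.
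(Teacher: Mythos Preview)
Your proposal is correct and follows essentially the same route as the paper: convert the Jacobi sums to Gauss sums, apply the finite-field duplication (Hasse--Davenport) relation together with the reflection formula $\g_\omega(r)\g_\omega(-r)=\omega^{(q-1)r}(-1)\,q$, and then dispose of the sign $\omega^{(q-1)r}(-1)$ using that $(q-1)r$ is even. The paper's write-up is more compressed and routes the intermediate Gauss-sum quotient slightly differently (mirroring its classical \Cref{lem:BetaProduct} line by line), but the ingredients and the logic are the same; your explicit identification of where the hypothesis ``$D$ is the denominator of $r/2$'' enters is a nice clarification that the paper leaves implicit.
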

\begin{proof}
    The standard properties of Gauss and Jacobi sums from \cite[Section 2]{Win3X} give
    \begin{equation*}
        \begin{split}
            J_\omega\left(1-\frac{r}{2},r\right)J_\omega\left(\frac{1-r}{2},r\right)&=\omega^{(q-1)r}(2) \frac{\g_\omega(r)\g(-r)\g_\omega\left(\frac{r}{2}\right)\g_\omega\left(1-\frac{r}{2}\right)}{\g_\omega\left(-\frac{r}{2}\right)\g_\omega\left(1+\frac{r}{2}\right)}\\
            &= q\omega^{(q-1)r}(-1) \, \omega^{(q-1)r}(4)  = q \, \omega^{(q-1)r}(4).
        \end{split}
    \end{equation*}
\end{proof}
We now use the Jacobi sums in the previous Lemma to describe the $p$th coefficients of the $f_{2,D}^\sharp$ functions, as listed in \Cref{tab:f2D}.
\begin{lemma}\label{lem:f2D}
    Let $j\in \{1,2,3,4,6\}$. For any prime $p\equiv 1\pmod {M:=\text{lcm}(4,D)}$ where $D=24/j$,     
    \begin{multline}\label{eq:4.10}
        a_p\left(f_{2,D}^\sharp \right) =-\P\left(\left\{\frac12,\frac j{24}\right\},\left\{1,\frac 32-\frac j{24}\right\};1;\wp\right)\iota_\wp(j/24)(1/16)\\
        -\P\left(\left\{\frac12,\frac {j+12}{24}\right\},\left\{1, 1-\frac j{24}\right\};1;\wp\right)\iota_\wp(j/24)(1/16)\\
       =-\left[J_\omega(j/24,1-j/12) +J_\omega((j+12)/24,-j/12)\right]\omega^{-(p-1)j/6}(2).
    \end{multline} 
    This means $\rho_{f_{2,D}^\sharp}|_{G(M)}$ is isomorphic to the direct sum of two characters of $G(M)$ given by the Jacobi sums.
\end{lemma}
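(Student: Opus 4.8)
The plan is to establish the two displayed equalities separately and then read off the statement on $\rho_{f_{2,D}^\sharp}|_{G(M)}$.

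\textbf{The bottom equality} is the finite-field shadow of the classical evaluation ${}_2P_1[r_1,r_2;q_2;1]=B(r_2,q_2-r_1-r_2)$. For each of the two data, $\P(\{\tfrac12,r\},\{1,s\};1;\wp)$ is a length-two hypergeometric character sum at $\lambda=1$, and the finite-field Gauss summation theorem (Greene \cite{Greene}, McCarthy \cite{McCarthy}) collapses it to a single Jacobi sum; after simplifying the resulting quotient of Gauss sums with the reflection and multiplication formulae of \cite[\S2]{Win3X} and tracking the $R_iQ_i(-1)$ prefactors in \eqref{eq:PP}, it equals $J_\omega(r,\,s-r-\tfrac12)$ up to an explicit character evaluated at $-1$. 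Specializing $(r,s)=(\tfrac{j}{24},\tfrac32-\tfrac{j}{24})$ gives $s-r-\tfrac12=1-\tfrac{j}{12}$ and $(r,s)=(\tfrac{j+12}{24},1-\tfrac{j}{24})$ gives $s-r-\tfrac12=-\tfrac{j}{12}$, so the two Jacobi sums are $J_\omega(\tfrac{j}{24},1-\tfrac{j}{12})$ and $J_\omega(\tfrac{j+12}{24},-\tfrac{j}{12})$; the common factor is $\iota_\wp(\tfrac{j}{24})(\tfrac1{16})=\iota_\wp(-\tfrac{j}{6})(2)=\omega^{-(p-1)j/6}(2)$ by \eqref{eqn:residue symbol}, which also absorbs the stray character at $-1$.

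\textbf{The top equality} uses the $\BK_1$-incarnation of the EHMM. As in \Cref{sec:complexAspect}, $\BK_1(r,s)$ is produced by \eqref{eq:f} with $\ba^\flat=\{\tfrac12\}$, $\bbeta^\flat=\{1\}$ and $t=\lambda(\tau)$, so the underlying datum is $\HD_{r,s}=\{\{\tfrac12,r\},\{1,s\}\}$; the $n=2$ analogue of \Cref{thm:main} matches the Hecke eigenvalue of the normalized eigenform attached to the $G2$ orbit $O(D)$ with the corresponding $\overline\Q$-linear combination of the twisted sums $(-1)^{n-1}\chi_{\HD}(\wp)\,\P(\HD_{r,s};1;\wp)$ of \eqref{eq:2.9}. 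Feeding in the explicit expression of $f_{2,D}^\sharp$ in terms of the $\BK_1(r,s)(N\tau)$ from \Cref{tab:f2D} (computed in \cite{Rosen-K1}) and the Jacobi-sum values just found, $a_p(f_{2,D}^\sharp)$ becomes an explicit $\overline\Q$-linear combination of Jacobi-sum characters of $G(M)$ (Weil, \Cref{thm:Weil}). This combination collapses to the two asserted terms: the Hasse--Davenport product and multiplication relations amalgamate the Jacobi sums attached to the members of $O(D)$, while the requirements that the answer lie in the coefficient field of $f_{2,D}^\sharp$ and be stable under the diamond action pin down the coefficients, leaving $-\bigl[J_\omega(\tfrac{j}{24},1-\tfrac{j}{12})+J_\omega(\tfrac{j+12}{24},-\tfrac{j}{12})\bigr]\omega^{-(p-1)j/6}(2)$; the product identity \eqref{eq:4.9} is exactly what controls the cross terms, and together with $\det\rho_{f_{2,D}^\sharp}$ it fixes the normalization. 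Finally, each of $\wp\mapsto-J_\omega(\tfrac{j}{24},1-\tfrac{j}{12})\,\omega^{-(p-1)j/6}(2)$ and $\wp\mapsto-J_\omega(\tfrac{j+12}{24},-\tfrac{j}{12})\,\omega^{-(p-1)j/6}(2)$ is a character of $G(M)$ by \Cref{thm:Weil} (after absorbing the power of $2$ into the $M$th residue symbol), and their Frobenius traces sum to $\Tr\rho_{f_{2,D}^\sharp}(\mathrm{Frob}_\wp)$ for every prime $\wp$ above every $p\equiv1\pmod{M}$; since a semisimple representation is determined by its character, $\rho_{f_{2,D}^\sharp}|_{G(M)}$ is their direct sum, which is the stated CM decomposition.

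The genuinely delicate point is the collapse in the second paragraph: showing that the \emph{a priori} several-term combination of Jacobi sums dictated by \Cref{tab:f2D} --- four terms when $D=24$ --- reduces to exactly two, with precisely the stated coefficients and the clean factor $\omega^{-(p-1)j/6}(2)$. This forces one to juggle the Hasse--Davenport and reflection formulae carefully, to respect the embedding $i^2=-1$ fixed in \Cref{tab:f2D}, and to verify case by case (for $j\in\{1,2,3,4,6\}$) that the CM field of $f_{2,D}^\sharp$ recorded in \Cref{tab:f2D} lies inside $\Q(\zeta_M)$ --- which is exactly why $\rho_{f_{2,D}^\sharp}|_{G(M)}$ splits into two characters in the first place.
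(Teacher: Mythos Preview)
Your treatment of the bottom equality (reducing each length-two $\P$ at $\lambda=1$ to a single Jacobi sum via finite-field Gauss summation) is fine and is exactly what the paper does, citing \cite[(6.11)]{Win3X}.

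For the top equality, however, your route diverges sharply from the paper's, and the step you yourself flag as ``genuinely delicate'' is a real gap. You propose to feed the explicit $\BK_1$-combinations from \Cref{tab:f2D} through an $n=2$ analogue of \Cref{thm:main}, obtaining a linear combination of (up to four) Jacobi sums, and then to \emph{collapse} that combination to two terms via Hasse--Davenport and multiplication relations. But you never carry out this collapse; the appeal to ``stability under the diamond action'' and ``the coefficient field of $f_{2,D}^\sharp$'' does not by itself determine the coefficients, and \eqref{eq:4.9} controls a product of two Jacobi sums, not the cross-terms in a four-term sum. Note also that \Cref{thm:main} is only stated for $n\in\{3,4\}$, so invoking an unproved $n=2$ analogue already needs justification.

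The paper avoids this Jacobi-sum bookkeeping entirely with a congruence-plus-bound argument modeled on \cite[Prop.~6.1]{HMM1}. First, it shows $a_p(f_{2,D}^\sharp)\equiv u_{D,p}+v_{D,p}\pmod p$ for one embedding into $\Q_p$: expand $(\lambda/16)^{1/D}(1-\lambda)^{-2/D}$ as a power series in $\lambda^{1/D}$ and use commutative formal group laws (as in \cite[\S3.3]{HMM1}) to identify the $p$th coefficient with $a_p(f_{2,D}^\sharp)$ mod $p$; then convert that $\Gamma$-quotient to $\G_p$ via \eqref{eq:gamma-to-p-gamma} and to a Jacobi sum via Gross--Koblitz (\Cref{cor:Jac-Gp}), noting the second Jacobi sum lies in $p\Z_p$. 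Second, since all the $\BK_1(i/D,3/2-i/D)$ in the orbit share the same Hecke eigensystem, the congruence holds for every $\sigma_k\in\Gal(\Q(\zeta_M)/\Q)$. Writing $w_{D,p}=u_{D,p}+v_{D,p}-a_p(f_{2,D}^\sharp)$ in an integral basis of $\Z[\zeta_M]$, one gets $p\mid w_{D,p}$; but under every archimedean embedding $|w_{D,p}|<4\sqrt p$, so $w_{D,p}=0$ for $p>17$, with small primes checked directly. This argument is uniform in $j$ and sidesteps the case-by-case Hasse--Davenport manipulation you would need.
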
  
Before proving it,  we first mention a corollary. As $a_p\left(f_{2,D}^\sharp \right)\in\Z$, it is independent of the choice of $\omega \in \widehat{\F_p^\times}$ and how we embed $\omega$ into $\Z_p$.  Letting it be ${\bar \omega}_p$, by \Cref{cor:Jac-Gp} we obtain a description of the $p$-adic unit root of $a_p\left(f_{2,D}^\sharp\right)$.
\begin{corollary}\label{cor:ap(f2)decomposition}
From \eqref{eq:ap(f2D)modp}, for $j\in\{1,\cdots,6\}$, $r=j/12$, $D=24/\gcd(j,24)$ as before, for $p\equiv 1\mod D$
    \[
        a_p\left(f_{2,D}^\sharp \right)=\left(a_p\left(f_{2,D}^\sharp \right)\right)_0+p/\left(a_p\left(f_{2,D}^\sharp \right)\right)_0,
    \]
    where
    \begin{equation}\label{eq:mu-defn}
        \left(a_p\left(f_{2,D}^\sharp \right)\right)_0=\G_p\left(\frac {\frac r2,1-r}{1-\frac r2} \right) \omega_p^{(p-1)r}(4)=-(-1)^{(p-1)r/2} \omega_p(1/4)^{(p-1)r} \cdot \frac{\G_p(r)}{\G_p\left(\frac{1+r}{2}\right)^2}. 
    \end{equation} 
\end{corollary}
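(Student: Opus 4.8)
The plan is to combine the explicit character-sum evaluation of $a_p(f_{2,D}^\sharp)$ coming from \Cref{lem:f2D} (recorded mod $p$ as \eqref{eq:ap(f2D)modp}) with the Gross--Koblitz formula in the form of \Cref{cor:Jac-Gp}. Put $r=j/12$; then $j/24=r/2$, $(j+12)/24=(r+1)/2$, and a short computation gives $\bar\omega_p^{-(p-1)j/6}(2)=\omega_p^{(p-1)r}(4)$, so \Cref{lem:f2D} reads $\rho_{f_{2,D}^\sharp}|_{G(M)}\simeq\chi_1\oplus\chi_2$ with
\begin{equation*}
\chi_1(\mathrm{Frob}_\wp)=-J_{\bar\omega_p}\!\left(\tfrac{r}{2},\,1-r\right)\omega_p^{(p-1)r}(4),\qquad
\chi_2(\mathrm{Frob}_\wp)=-J_{\bar\omega_p}\!\left(\tfrac{r+1}{2},\,-r\right)\omega_p^{(p-1)r}(4),
\end{equation*}
and $a_p(f_{2,D}^\sharp)=\chi_1(\mathrm{Frob}_\wp)+\chi_2(\mathrm{Frob}_\wp)$.

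First I would show the first summand is the $p$-adic unit root. For $1\le j\le 6$ we have $r\in(0,\tfrac12]$, so $\tfrac{r}{2}$, $1-r$, $1-\tfrac{r}{2}$ all lie in $(0,1)$ with $\tfrac{r}{2}+(1-r)=1-\tfrac{r}{2}<1$; hence \Cref{cor:Jac-Gp} applies and gives $-J_{\bar\omega_p}(\tfrac{r}{2},1-r)=\Gamma_p\!\left(\frac{r/2,\,1-r}{1-r/2}\right)$, a $p$-adic unit. Therefore $\chi_1(\mathrm{Frob}_\wp)=\Gamma_p\!\left(\frac{r/2,\,1-r}{1-r/2}\right)\omega_p^{(p-1)r}(4)$ is a $p$-adic unit, and I take it as $\left(a_p(f_{2,D}^\sharp)\right)_0$.

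Next I would show the second summand equals $p/\left(a_p(f_{2,D}^\sharp)\right)_0$, equivalently that the two summands multiply to $p$. They are the two roots of the characteristic polynomial of $\rho_{f_{2,D}^\sharp}(\mathrm{Frob}_\wp)$, which (since $p$ splits completely in $\Q(\zeta_M)$) is $X^2-a_p(f_{2,D}^\sharp)X+(\text{nebentypus at }p)\cdot p$; as the CM field of $f_{2,D}^\sharp$ lies in $\Q(\zeta_M)$ (\Cref{tab:f2D}), the nebentypus has conductor dividing $M$, hence is trivial at $p\equiv1\pmod M$, so the product of the roots is $p$. (Alternatively, and for the slightly larger range $p\equiv1\pmod D$, one computes the product of the two Jacobi sums directly, writing each as a ratio of Gauss sums and using periodicity of Gauss sums mod $1$, the Hasse--Davenport product relation, and $\g_{\bar\omega_p}(r)\g_{\bar\omega_p}(1-r)=(-1)^{(p-1)r}q$; the outcome is $q$ times a root of unity that equals $1$ in all our cases, e.g.\ when $j$ is odd the stray factor is $\left(\tfrac{2}{p}\right)$ and then $M$ is divisible by $8$.) Hence $\chi_2(\mathrm{Frob}_\wp)=p/\left(a_p(f_{2,D}^\sharp)\right)_0$ has $p$-adic order $1$, and $a_p(f_{2,D}^\sharp)=\left(a_p(f_{2,D}^\sharp)\right)_0+p/\left(a_p(f_{2,D}^\sharp)\right)_0$.

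It remains to bring $\left(a_p(f_{2,D}^\sharp)\right)_0=\Gamma_p\!\left(\frac{r/2,\,1-r}{1-r/2}\right)\omega_p^{(p-1)r}(4)$ into the second stated shape. Applying the $p$-adic reflection formula $\Gamma_p(x)\Gamma_p(1-x)=(-1)^{x_0}$ (with $x_0\in\{1,\dots,p\}$ the representative of $x$ mod $p$) at $x=\tfrac{r}{2}$ and $x=1-r$, then the $p$-adic Gauss multiplication formula for $\Gamma_p(\tfrac{r}{2})\Gamma_p(\tfrac{r+1}{2})$ (a consequence of Hasse--Davenport via Gross--Koblitz), and using $\Gamma_p(\tfrac12)^2=(-1)^{(p+1)/2}$, one rewrites it as $-(-1)^{(p-1)r/2}\,\omega_p(1/4)^{(p-1)r}\,\Gamma_p(r)/\Gamma_p\!\left(\tfrac{1+r}{2}\right)^2$, which is \eqref{eq:mu-defn}; reducing mod $p$ recovers \eqref{eq:ap(f2D)modp}, confirming this is the unit root. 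I expect this last step to be the main obstacle: one must track the $p$-dependent signs $(-1)^{x_0}$ from the reflection formula together with the Teichmüller twists through the duplication formula and verify they collapse to exactly $-(-1)^{(p-1)r/2}$ and $\omega_p(1/4)^{(p-1)r}$. By contrast, the Gross--Koblitz input and the characteristic-polynomial argument are routine.
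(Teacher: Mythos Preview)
Your proposal is correct and matches the paper's approach: apply Gross--Koblitz (\Cref{cor:Jac-Gp}) to the Jacobi-sum decomposition from \Cref{lem:f2D} to obtain the first form of the unit root, then use the reflection and duplication formulae for $\Gamma_p$ to reach the second form, exactly as in the final paragraph of the proof of \Cref{lem:f2D}. For the step ``product of the two summands equals $p$'' the paper simply has the Jacobi-sum identity \Cref{lem:JacProduct} on hand---precisely your stated alternative---whereas your primary characteristic-polynomial argument needs the nebentypus of $f_{2,D}^\sharp$ to be trivial at $p\equiv 1\pmod M$, which is true here but is not a formal consequence of the CM field lying in $\Q(\zeta_M)$.
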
 
Here is an outline of the next proof.  Using \Cref{thm:Weil}, we regard the sum of the two Jacobi sums as a 2-dimensional representation of $G(M)$ (which is extendable to $G_\Q$). Then we compare it with that arises from $f_{2,D}^\sharp$.  
\begin{proof}[Proof of \Cref{lem:f2D}] 
    First we explain the relation between the two $\P$-functions. In this proof we fix  the generator of the character group by assuming $\omega:=\iota_\wp(\frac 1{p-1})$, see \eqref{eqn:residue symbol} for the notation.  By the finite field Gauss summation formula (\cite[(6.11)]{Win3X}), 
    $$
      -\P\left(\left\{\frac12,\frac j{24}\right\},\left\{1,\frac 32-\frac j{24}\right\};1;\wp\right)\iota_\wp(-j/24)(16)=-J_\omega(j/24,1-j/12)\omega^{-\frac{(p-1)j}6}(2).
    $$ 
    By \Cref{thm:Weil}, it can be viewed as the Frobenius trace of a character $\chi_{D,1}$ of  $G(M)$. We will denote the above value by $u_{D,p}$ and $|u_{D,p}|=\sqrt p$ under any archimedean embedding. Similar conclusion applies to the second summand which is denoted by $v_{D,p}$ and use $\chi_{D,2}$ to denote the corresponding character  of $G(M)$. It is not isomorphic to $\chi_{D,1}$. Let  $\displaystyle \tau:\zeta_{M}\mapsto \zeta_{M}^{M/2+1}\in \Gal(\Q(\zeta_{M})/\Q)$.     
    It is straightforward to check that $\tau$ swaps $\chi_{D,1}$ and $\chi_{D,2}$. Use $\rho_D$ to denote $\chi_{D,1}\oplus \chi_{D,2}$ as a representation of $G(M)$. We will treat the right hand side of the claim as the Frobenius trace of $\rho_D$.
    
    Next we show that the claim holds when modulo $p$ under one embedding to $\Q_p$.  From \Cref{tab:f2D},  
    $$a_p\left(f_{2,D}^\sharp \right)=a_p\left(\BK_1 \left(\frac j{24},\frac 32-\frac j{24}\right)(2D\tau)\right)=a_p\left(\BK_1 \left(\frac 1{D},\frac 32-\frac 1{D}\right)(2D\tau)\right) $$ is an integer  bounded by $2\sqrt p$. We expand $(\l/16)^{1/D}(1-\l)^{-2/D}$ related to $\BK_1 (\frac 1{D},\frac 32-\frac 1{D})(\tau)$ as above  as a power series in terms of uniformizer $\l^{1/D}$. Using properties of  commutative formal group laws as in \cite[\S3.3]{HMM1}, the  $p^{th}$ coefficient of this power series, $4^{-2/D}\frac{\G(2/D+(p-1)/D)}{\G(2/D)\G(1+(p-1)/D)}$, is congruent to $a_p\left(f_{2,D}^\sharp \right)$ modulo $p$. Thus for $p\equiv 1\pmod M$
    5
    \begin{multline}\label{eq:ap(f2D)modp}
        a_p\left(f_{2,D}^\sharp \right)\equiv 4^{-2/D}\frac{\G(2/D+(p-1)/D)}{\G(2/D)\G(1+(p-1)/D)}\overset{\eqref{eq:gamma-to-p-gamma}}\equiv  \G_p\left(\frac{\frac1{D},1-\frac 2D}{1-\frac 1{D}}\right)4^{2(p-1)/D} \\ \overset{\Cref{cor:Jac-Gp}}\equiv -J_{\bar \omega_p}\left(\frac1{D},1-\frac 2D\right)4^{2(p-1)/D}\pmod p.
    \end{multline}
    The last claim follows from \Cref{cor:Jac-Gp}, by choosing $\omega$ as $\bar \omega_p$, the inverse of the Teichm\"uller character.  Thus for $p\equiv 1\pmod M$ under this embedding to $\Q_p$ 
    \[
        a_p\left(f_{2,D}^\sharp \right)\equiv \text{Tr}\rho_D(\Frob_p)=u_{D,p}+v_{D,p}\pmod p,
    \]
    as by the Gross--Koblitz formula, $J_{\bar \omega_p}\left(\frac1{D}+\frac12,1-\frac 2D\right)4^{2(p-1)/D}\in p\Z_p$. 
    
    Now we will apply the idea used in the proof of Proposition 6.1 of \cite{HMM1} to establish the equality. Pick an integral basis $b_1,\cdots,b_m$ for $\Z[\zeta_M]$ as a free-$\Z$-module. So $w_{D,p}:= u_{D,p}+v_{D,p}-a_p\left(f_{2,D}^\sharp \right)\in\Z[\zeta_M]$ can be written as $\sum_{i=1}^m a_ib_i$ where $a_i\in\Z$. For each $\sigma_k:\zeta_M\mapsto \zeta_M^k$ where $k\in (\Z/M\Z)^\times$, $\sigma_k(u_{D,p}+v_{D,p})$ consists of two other $\P$-functions with $j$ replaced by $kj$. Since for each $1\le i\le D/2$ coprime to $D$, $\BK_1(i/D,3/2-i/D)(2D\tau)$ is in the same Hecke orbit as $\BK_1(1/D,3/2-1/D)(2D\tau)$, so $\sigma_k(w_{D,p})\equiv 0 \pmod p$ for each $k$ as above. Thus $p\mid a_i$ for each $1\le i\le m.$ Meanwhile, under each archimedean embedding of $\Q(\zeta_M)$, the norm of $w_{D,p}$ is less than $4\sqrt p$. Thus for $p>17$, each $a_i=0$. Namely $w_{D,p}=0$. Smaller primes can be verified directly.  
    To verify that the two explicit $p$-adic expressions for $\left(a_p\left(f_{2,D}^\sharp\right)\right)_0$ agree, we use the reflection formula and a double angle formula for $\G_p$, see \eqref{eq:p-gamma-reflection} and \cite{CohenII}, respectively.  Using these gives
    \begin{multline*}
        \G_p\left(\frac {\frac r2,1-r}{1-\frac r2} \right) \omega_p^{(p-1)r}(4)=\G_p\left(\frac {(r+1)/2,\frac r2,1-r,r,1/2}{1-\frac r2,r,r/2,(r+1)/2,(r+1)/2} \right) \\=\G_p\left(\frac {r,1/2,1-r,r,1/2}{1-\frac r2,r,r/2,(r+1)/2,(r+1)/2} \right)\omega_p(1/4)^{(p-1)r}\\=- (-1)^{(p-1)r/2}
        \omega_p(1/4)^{(p-1)3r}\cdot \frac{\G_p(r)}{\G_p\left(\frac{1+r}{2}\right)^2}.
    \end{multline*}
\end{proof}
We are now ready to prove \Cref{thm:mainGalois} in a manner parallel to the proof of \Cref{thm:mainclassical}. 
\begin{proof}[Proof of \Cref{thm:mainGalois}]
    Let $\HD_3(r,s)=\{\{\frac12,\frac12, s-r\};\{1,1,s \}\}\}$. Note that by  \Cref{thm:main} and the discussion in \cite{HMM1}
    \begin{equation}\label{eq:4.11}
        \P(\HD_3(r,s);1,\wp) \iota_\wp(s-r)(1/16)=a_p(f_{3,D}^\sharp).
    \end{equation}
    Recall that $r=j/12$, $D=24/\gcd(j,24)$ is double the denominator of $r$ in the lowest case. We let $M=\text{lcm}(4,D)$ such that for any $\wp$ above  $p \equiv 1 \pmod{M}$, $-J(\iota_\wp(r),\iota_\wp(1-r))=1$ and $-J(\iota_\wp(\frac12),\iota_\wp(\frac12))=1$. Thus, by  \Cref{thm:Katz}, for any prime ideal $\wp$ above $p\equiv 1\pmod M$ $$\Tr \rho_{\{\HD_4(r);-1\}}(\text{Frob}_\wp)=-\mathbb{P}(\HD_{4}(r);-1; \wp).$$

    Using the conversion between the $\mathbb{P}$ and $H_{p}$ functions in \eqref{eq:P-H}, we get
    \begin{equation}\label{eq:PtoH}
        -\mathbb{P}(\HD_{4}(r);-1; \wp)
        \overset{\eqref{eq:P-H}}= J\left(\iota_\wp(r),\iota_\wp\left(\frac{1}{2}\right)\right)^{2} H_{p}(\HD_{4}(r);-1;\wp)
    \end{equation}
    Next we apply McCarthy's well-poised theorem \Cref{thm:wp}  to get
    \[
        -\mathbb{P}(\HD_{4}(r);-1; \wp)= p \left[H_{p}(\HD_{3}(r,r/2);1;\wp) + H_{p}(\HD_{3}((r,r+1)/2);1;\wp) \right].
    \]
    We continue by converting the $H_p$-functions back to $\P$-functions
    \begin{multline*}
        -\mathbb{P}(\HD_{4}(r);-1; \wp)=\\ -\frac{p}{J\left(\iota_{\wp}(r),\iota_{\wp}(- \frac{r}{2}\right))} \mathbb{P}\left(\HD_{3}\left(r,\frac r2\right);1;\wp\right) - \frac{p}{J\left(\iota_{\wp}(r),\iota_{\wp}(\frac{1-r}{2})\right)}\mathbb{P}\left(\HD_{3}\left(r,\frac{r+1}2\right);1;\wp\right).   
    \end{multline*}
    Finally, we finish the conclusion using the expressions for $a_{p}(f_{i,D}^{\#})$ for $i=2,3$
    \begin{equation*}
        \begin{split}
            &-\mathbb{P}(\HD_{4}(r);-1; \wp)\\
            &\overset{\eqref{eq:4.11}}= a_{p}(f_{3,D}^{\#})\iota_{\wp}(r)(1/4)\left[-\frac{p}{J\left(\iota_{\wp}(r),\iota_{\wp}(- \frac{r}{2}\right))} -\frac{p}{J\left(\iota_{\wp}(r),\iota_{\wp}(\frac{1-r}{2})\right)}\right] \\
            &\overset{\eqref{eq:4.9}}= a_{p}(f_{3,D}^{\#})\cdot \iota_{\wp}(r)(1/4)^2 \left[- J\left(\iota_{\wp}(r),\iota_{\wp}\left(\frac{1-r}{2}\right)\right) -J\left(\iota_{\wp}(r), \iota_{\wp}\left(\frac{-r}{2}\right)\right) \right]\\
            &\overset{\eqref{eq:4.10}}= a_{p}(f_{3,D}^{\#})a_{p}(f_{2,D}^{\#}).
        \end{split}
    \end{equation*}
The result now follows from \eqref{eq:PtoH} and equation (2.13) of \cite{Win3X}, which implies $$\Omega_{j,\F_q}=J\left(\iota_\wp(r),\iota_\wp\left(\frac{1}{2}\right)\right)^{2} =p\frac{\g(\iota_\wp(r))^2}{\g(\iota_\wp(r+1/2))^2}.$$  
In terms of representations, this means $\rho_{\{\HD(r);-1\}}$ and $\rho_{f_{2,D}^\sharp}\otimes \rho_{f_{3,D}^\sharp}$'s restrictions to $G(M)$ are isomorphic.
\end{proof}
We now consider two cases whose data are defined over $\Q$. By the work of Beukers, Cohen, and Mellit \cite{BCM}; the $H_p$ functions can be extended to almost all primes of $p$.
\begin{corollary}
    Consider the hypergeometric data $\HD_4(1/2)$, defined in \Cref{sec:intro}, and $\HD_{1} = \left\{\left\{\frac{1}{6},\frac{1}{6},\frac{5}{6},\frac{5}{6}\right\},\left\{\frac{1}{3},\frac{1}{3},\frac{2}{3},\frac{2}{3}\right\} \right\}$. Then for all odd primes we have
    \[
        H_p\left(\HD_{4}(1/2);-1\right)=a_p(f_{32.3.c.a})a_p(f_{32.2.a.a})
    \]
    and for all primes $p > 5$
    \[
        p^2H_p\left(\HD_{1};-1\right) =a_p(f_{288.3.g.c})a_p(f_{288.2.a.a}).
    \]
    Equivalently, if we let $\tilde \rho_{\left\{\HD_{4}(1/2);-1\right\}}$ and $\tilde \rho_{\left\{\HD_{1};-1\right\}}$ be  representations of $G_\Q$ such that at unramified primes $p$, 
    \[
        \tilde \rho_{\left\{\HD_{4} (1/2);-1\right\}}(\Frob_p)=H_p(\HD_{4}(1/2);-1) \text{ and } \tilde \rho_{\left\{\HD_1;-1\right\}}(\Frob_p)=p^2 H_p(\HD_1;-1).
    \]
    Then
    \[
        \tilde \rho_{\left\{\HD_{4}(1/2);-1\right\}} \cong \rho_{f_{32.3.c.a}} \otimes \rho_{f_{32.2.a.a}}
    \]
    and
    \[
        \tilde \rho_{\left\{\HD_{1};-1\right\}} \cong \rho_{f_{288.3.g.c}} \otimes \rho_{f_{288.2.a.a}} \cong \rho_{f_{288.3.g.a}} \otimes \rho_{f_{288.2.a.e}}.
    \]
\end{corollary}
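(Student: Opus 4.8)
The plan is to bootstrap from \Cref{thm:mainGalois}, which already matches the two sides at every prime $p\equiv1\pmod M$, using the fact that both $\HD_4(1/2)$ and $\HD_1$ are defined over $\Q$ to promote the statements from $G(M)$ to $G_\Q$, and then to dispose of the residue classes $p\not\equiv1\pmod M$ by a combination of vanishing and a Galois-descent computation.

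First I would record that both data are defined over $\Q$ in the sense of \Cref{ss:2.1.1}: for $\HD_4(1/2)$ the two polynomials are $(x+1)^4$ and $(x-1)^4$, and for $\HD_1$ they are $(x^2-x+1)^2$ and $(x^2+x+1)^2$. By \cite{BCM} the finite field hypergeometric function then extends to a $\Z$-valued function $H_p$ at every odd $p$ (resp.\ every $p>5$), and there are $\ell$-adic representations $\tilde\rho_{\{\HD_4(1/2);-1\}}$ and $\tilde\rho_{\{\HD_1;-1\}}$ of $G_\Q$, cut out of the third \'etale cohomology of the smooth model of the threefold in \eqref{eq:hgv} (with exponents adjusted as indicated there), whose geometric Frobenius traces at a good prime $p$ are $H_p(\HD_4(1/2);-1)$ and $p^2H_p(\HD_1;-1)$ respectively. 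The Tate twist responsible for the $p^2$ is the same one that produces the constant $\Omega_{j,\F_p}$ of \Cref{thm:mainGalois} (for $j=6$ one computes $\Omega_{6,\F_p}=p^2$ once $p\equiv1\pmod4$), and matching these normalization factors is a bookkeeping step I would carry out explicitly but which poses no conceptual difficulty.

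With $M=4$ for $\HD_4(1/2)$ and $M=12$ for $\HD_1$, \Cref{thm:mainGalois} gives the equality of Frobenius traces at all $p\equiv1\pmod M$, so it remains to treat $p\not\equiv1\pmod M$. For $p\equiv3\pmod4$ the form $f_{2,D}^\sharp$ has CM by $\Q(\sqrt{-1})$ (\Cref{tab:f2D}), so $a_p(f_{2,D}^\sharp)=0$; on the hypergeometric side the vanishing clause of \Cref{thm:wp} (the condition $\omega^{(q-1)r_1}=\square$, with $r_1=\tfrac12$ for $\HD_4(1/2)$ and $r_1=\tfrac16$ in the well-poised presentation that controls $\HD_1$) forces $H_p=0$ as well, so both sides vanish. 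For $\HD_1$ this leaves only the class $p\equiv5\pmod{12}$, where $a_p(f_{288.2.a.a})$ need not vanish. Here I would use that $\HD_1$ is the $\Q$-rational datum whose $\ell$-adic realization restricted to $G(12)$ is the hypergeometric representation of $\HD_4(1/6)$, a datum defined over $\Q(\zeta_6)=\Q(\sqrt{-3})$: for $p\equiv5\pmod{12}$ the prime $p$ is inert in $\Q(\sqrt{-3})$, so $\Frob_p^2$ is the geometric Frobenius at the degree-two prime $\wp\mid p$ of $\Z[\zeta_6]$ with $N(\wp)=p^2\equiv1\pmod{12}$; applying \eqref{eq:point-counts} at $q=p^2$, together with the CM recursion for $a_{p^2}(f_{288.2.a.a})$ and the Hecke recursion for $a_{p^2}(f_{288.3.g.c})$, determines the characteristic polynomial of $\tilde\rho_{\{\HD_1;-1\}}(\Frob_p)$, and comparing this with the characteristic polynomial of $(\rho_{f_{288.3.g.c}}\otimes\rho_{f_{288.2.a.a}})(\Frob_p)$ — using the explicit induced (CM) shape of $\rho_{f_{288.2.a.a}}$ to fix the eigenvalue pattern on that side — pins down the trace to be $a_p(f_{288.3.g.c})a_p(f_{288.2.a.a})$. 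This residue class $p\equiv5\pmod{12}$ (together with the normalization tracking) is the main obstacle.

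Once the Frobenius traces agree at every good prime, the claimed isomorphisms over $G_\Q$ follow from Chebotarev density and Brauer--Nesbitt. On one side $\rho_{f_{32.3.c.a}}\otimes\rho_{f_{32.2.a.a}}$ and $\rho_{f_{288.3.g.c}}\otimes\rho_{f_{288.2.a.a}}$ are semisimple, being tensor products of irreducible Deligne representations; on the other, $\tilde\rho$ is semisimple because its restriction to $G(M)$ is (by \Cref{thm:mainGalois} it is a direct sum of twists of $\rho_{f_{3,D}^\sharp}$ by characters) and semisimplicity descends along a finite-index subgroup in characteristic zero. Hence equality of all characteristic polynomials of Frobenius upgrades to $\tilde\rho\cong\rho_{f}\otimes\rho_{g}$. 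Finally the identity $\rho_{f_{288.3.g.c}}\otimes\rho_{f_{288.2.a.a}}\cong\rho_{f_{288.3.g.a}}\otimes\rho_{f_{288.2.a.e}}$ is purely formal: twisting both tensor factors by $\chi_3=\left(\tfrac{3}{\cdot}\right)$ uses $\rho_{f_{288.3.g.a}}\cong\rho_{f_{288.3.g.c}}\otimes\chi_3$ (\Cref{tab:G2-eigenforms}), the fact that $f_{288.2.a.e}$ is the quadratic twist $f_{288.2.a.a}\otimes\chi_3$ (still CM by $\Q(\sqrt{-1})$), and $\chi_3^{\otimes2}=1$.
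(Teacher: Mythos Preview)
Your overall strategy matches the paper's: bootstrap from \Cref{thm:mainGalois} at $p\equiv 1\pmod M$ and treat the remaining residue classes by vanishing plus a finite check. But two points diverge.

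\textbf{The gap.} For $\HD_1$ at $p\equiv 3\pmod 4$ you invoke the vanishing clause of \Cref{thm:wp} applied to ``the well-poised presentation that controls $\HD_1$'', i.e.\ $\HD_4(1/6)$. This does not cover $p\equiv 11\pmod{12}$: there $3\nmid p-1$, so the character shift $\chi\mapsto\chi\omega^{(p-1)/3}$ that relates $H_p(\HD_1;-1)$ to $H_p(\HD_4(1/6);-1)$ is unavailable, and the characters of order $6$ needed even to state $H_p(\HD_4(1/6);-1)$ do not exist. The BCM extension of $H_p(\HD_1;-1)$ at such primes is \emph{not} defined through $\HD_4(1/6)$. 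The paper avoids this entirely by working directly with the Gauss-sum expression for $H_p(\HD_1;-1)$ valid at every $p>5$: writing $H_p(\HD_1;-1)=\frac{1}{1-p}\sum_\chi G(\chi)^2\chi(-1)$ with $G(\chi)=\g(\phi\chi^3)\g(\bar\chi^3)/\g(\phi\chi)\g(\bar\chi)$, the involution $\chi\mapsto\phi\bar\chi$ shows $H_p=\phi(-1)H_p$, whence $H_p=0$ once $p\equiv 3\pmod 4$. You need an argument of this type.

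\textbf{The residue class $p\equiv 5\pmod{12}$.} Here your route (pass to the inert prime of $\Q(\sqrt{-3})$, compute at $q=p^2$, then recover the characteristic polynomial of $\Frob_p$) is substantially heavier than what the paper does, and as written it is incomplete: the trace of $\Frob_p^2$ alone does not determine a four-dimensional characteristic polynomial, so you would need further input (e.g.\ the determinant and the trace of $\Frob_p$ itself, or traces of higher powers) to pin it down. The paper instead observes that once the identity is known for $p\equiv 1\pmod{12}$, the only ambiguity on the modular side is whether $f_{3,12}^\sharp$ is $f_{288.3.g.a}$ or $f_{288.3.g.c}=f_{288.3.g.a}\otimes\chi_3$; these two candidates give tensor products that agree for $p\equiv 1\pmod 3$ and differ by a sign for $p\equiv 2\pmod 3$, so a \emph{single} numerical check at $p=5$ suffices (and simultaneously yields the second isomorphism with $f_{288.2.a.e}$). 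Your final twist argument for $\rho_{f_{288.3.g.c}}\otimes\rho_{f_{288.2.a.a}}\cong\rho_{f_{288.3.g.a}}\otimes\rho_{f_{288.2.a.e}}$ is correct in spirit, but the paper folds this into the same $p=5$ computation rather than appealing to an external identification $f_{288.2.a.e}=f_{288.2.a.a}\otimes\chi_3$.
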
 
\begin{proof}
    For $\HD_{4}(1/2)$, by \Cref{thm:mainGalois}, we only need to consider primes $p\equiv 3 \pmod 4$. Then $H_p(\HD_1;-1)=0=a_p(f_{288.2.a.a})$, at primes $p \equiv 3 \pmod{4}$. The vanishing of $a_{p}(f_{288.2.a.a})$ follows since $f_{288.2.a.a}$ has CM by $\Q(\sqrt{-1})$. 
    
    Now $H_p(\HD_1;-1)=0$ at primes $p \equiv 3 \pmod{4}$ since
    \begin{align*}
        H_p\left(\HD_1;-1\right)
        =&\frac{1}{1-p}\sum_{\chi \in \widehat {\F_p^\times}}G(\chi)^2\chi(-1)
        \overset{\chi\mapsto \phi\ol\chi}= \frac{1}{1-p}\sum_{\chi \in \widehat {\F_p^\times}}G(\phi\ol \chi)^2\phi\ol \chi(-1) \\
        =& \phi(-1)  H_p\left(\HD_1;-1\right),
    \end{align*}
    where $\displaystyle G(\chi)=\ol \chi(2^4)\frac{\g(\chi^6)\g(\chi)\g(\ol \chi^3)}{\g(\chi^2)\g(\chi^3)\g(\ol \chi)}=\frac{\g(\phi \chi^3)\g(\ol \chi^3)}{\g(\phi\chi)\g(\ol \chi)}$, by \cite[Theorem 2.7]{Win3X}. It suffices to check primes $p \equiv 1 \pmod{12}$ together with an additional prime which is $5$ modulo 12, by the Chebotarev density theorem. Below we use $p=5$. 
    
    Fix $\omega$ a generator of $\widehat {\F_p^\times}$. The substitution $\chi \mapsto \chi {\omega}^{\frac{p-1}{3}}$ and \Cref{thm:wp}  give 
    \begin{align*}
        p^2\cdot   &H_p\left(\HD_2;-1\right)=J_w\left( 1/3, 1/6\right)^2\cdot H_p\left(\HD_4(1/6);-1\right)\\
        =&-\frac{J_w\left(1/3,  1/6\right)^2}{J_w\left( 1/2, 1/6\right)^2}\mathbb P\left(\HD_4(1/6);-1;p\right)=-\P\left(\HD_4(1/6),-1;p\right)\\
        =& -\left(J_w(5/6, 1/12) + J_w(5/6, 7/12)\right) \ol\omega^{\frac{p-1}{3}}(2)  a_{p}(f^\sharp_{3,12}).
    \end{align*}
    The multiplication formula for Gauss sums \cite[Theorem 2.7]{Win3X} and \Cref{lem:f2D} give
    \begin{equation*}
        \begin{split}
            & -\left(J_w(5/6, 1/12) + J_w(5/6, 7/12)\right) \ol\omega^{\frac{p-1}{3}}(2) \\&=-\ol\omega^{\frac{p-1}{4}}(-3)J_w(1/2,1/4)-\omega^{\frac{p-1}{4}}(-3)J_w(1/2,3/4)= a_{p}(f_{288.2.a.a}).
        \end{split}
    \end{equation*}
    Now \cite[Theorem 1.1]{HMM1} gives  $ a_{p}(f^\sharp_{3,12}) = a_{p}(f_{288.3.g.c}) = a_{p}(f_{288.3.g.a})$ for $p\equiv 1 \pmod{12}$. To decide the exact combination, we note that $f_{288.3.g.c}=f_{288.3.g.a}\otimes \chi_3$ and use $p=5$ to determine the forms by comparing 
    \begin{equation*}
        \begin{split}
            p^2\cdot H_p \left(\HD;-1\right)  =-16 &=4 a_{p}(f_{288.2.a.a})= a_{p}(f_{288.3.g.c})a_{p}(f_{288.2.a.a})\\
            &=-4 a_{p}(f_{288.2.a.e}) =a_{p}(f_{288.3.g.a})a_{p}(f_{288.2.a.e}).
        \end{split}
    \end{equation*}
    This gives the results. 
\end{proof}
\section{\texorpdfstring{$p$}{p}-adic aspects and the Proof of \texorpdfstring{\Cref{thm:mainpadic}}{cite}}\label{sec:padic}
\subsection{Some \texorpdfstring{$p$}{p}-adic background and the \texorpdfstring{$p$}{p}-adic perturbation method}\label{subsec:p-adic-background}
For the $p$-adic perspective, we introduce Morita's \cite{Mor-gamma} $p$-adic $\G_p$ function.
\begin{defn}\label{defn:p-gamma}
    The $p$-adic Gamma function $\G_p: \Z_p \to \Z_p^\times$ is defined on positive integers $n$ by
    \[
        \G_p(n) \colonequals (-1)^n \prod_{\substack{1 \leq i < n \\ p \nmid i}} i,
    \]
    and then extended continuously to $\Z_p$.
\end{defn}
This function satisfies properties similar to the classical $\G$ function, which can be quickly checked for positive integers and then extended by continuity to all of $\Z_p$.  First, we have an analog to the functional equation of the $\G$ function
\begin{equation}\label{eq:p-gamma-functional}
    \frac{\G_p\left(x+1\right)}{\G_p(x)} = \begin{cases} -x & \text{if } x \in \Z_p^\times \\
    -1 & \text{if } x \in p\Z_p. \end{cases}
\end{equation}
As an analog to Euler's reflection formula,
\begin{equation}\label{eq:p-gamma-reflection}
    \G_p(x)\G_p(1-x) = (-1)^{x_0},
\end{equation}
where $x_0$ is the unique solution to $x \equiv x_0 \pmod{p}$ with $1 \leq x_0 \leq p$. We will want to translate between classical $\G$ functions, Pochhammer symbols, and $p$-adic $\G_p$ functions.  To do so, we observe that $\G_p(n)$ is nearly $(n-1)!$ for $n \in \Z^+$, differing only by a sign and the missing multiples of $p$.  In particular,
\[
    (n-1)! = (-1)^{n} \G_p(n) \prod_{j=1}^{\lfloor \frac{n-1}{p}\rfloor} jp.
\]
This can then be extended continuously to general Pochhammer symbols, which allows us to rewrite quotients of $\G$-functions as quotients of $\G_p$-functions when the inputs differ by an integer by first expressing the quotient as a Pochhammer symbol.  See Lemma 17 of \cite{LR} for more details.  Doing this gives us the following:
\begin{lemma}
    Let $r \in \Q \cap \Z_p$, and let $0 \leq k \leq p-1$.  We have
    \begin{equation}\label{eq:poch-to-p-gamma}
        (r)_k = (-1)^k \frac{\G_p(r+k)}{\G_p(r)} (r+[-r]_0)^{\nu(k, [-r]_0}
    \end{equation}
    where $[\cdot]_0$ denotes the first $p$-adic digit and $\nu(k, x)$ is defined to be $0$ if $k \leq x$ and 1 if $k > x$.  Further, if $a-b \in \Z$, then
    \begin{equation}\label{eq:gamma-to-p-gamma}
        \frac{\G(a)}{\G(b)} = (-1)^{a-b} \frac{\G_p(a)}{\G_p(b)} v_{a,b,p},
    \end{equation}
    where $v_{a,b,p}$ is the product of those numbers $x$ between $a$ and $b$ for which $x-a \in \Z$ and $x \in p\Z_p$, or the reciprocals thereof in the case $b > a$.
\end{lemma}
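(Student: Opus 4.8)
The plan is to establish \eqref{eq:poch-to-p-gamma} by telescoping the Pochhammer product against the functional equation \eqref{eq:p-gamma-functional}, and then to read off \eqref{eq:gamma-to-p-gamma} from a mild unbounded-$k$ generalization of \eqref{eq:poch-to-p-gamma} combined with the purely classical identity $\G(a)/\G(b) = (b)_{a-b}$.

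For \eqref{eq:poch-to-p-gamma}, write $(r)_k = \prod_{i=0}^{k-1}(r+i)$ and split the index set into those $i$ with $r+i \in \Z_p^\times$ and those with $r+i \in p\Z_p$; call the cardinalities $a$ and $b$, so $a+b = k$. For each $i$ the functional equation \eqref{eq:p-gamma-functional} gives $\G_p(r+i+1)/\G_p(r+i) = -(r+i)$ in the first case and $=-1$ in the second, so telescoping the full product $\prod_{i=0}^{k-1}\G_p(r+i+1)/\G_p(r+i) = \G_p(r+k)/\G_p(r)$ yields $\prod_{i:\, r+i\in\Z_p^\times}\bigl(-(r+i)\bigr) = (-1)^{b}\,\G_p(r+k)/\G_p(r)$. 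Pulling out the $(-1)^{a}$ and restoring the omitted factors $\prod_{i:\, r+i\in p\Z_p}(r+i)$ gives $(r)_k = (-1)^{a+b}\,\bigl(\G_p(r+k)/\G_p(r)\bigr)\,\prod_{i:\, r+i\in p\Z_p}(r+i)$. The hypothesis $0\le k\le p-1$ forces the residues $r, r+1, \dots, r+k-1 \bmod p$ to be distinct, so at most one index is ``bad''; such an index exists exactly when the first $p$-adic digit $[-r]_0$ of $-r$ lies in $\{0,\dots,k-1\}$, i.e.\ when $\nu(k,[-r]_0)=1$, in which case it equals $[-r]_0$ and the product above is $r+[-r]_0$. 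This is precisely \eqref{eq:poch-to-p-gamma}.

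For \eqref{eq:gamma-to-p-gamma}, assume first $a - b \in \Z_{\ge 0}$; the classical functional equation of $\G$ gives $\G(a)/\G(b) = (b)_{a-b}$, and the same telescoping as above (now with no bound on $k=a-b$, so possibly several bad indices) gives $(b)_{a-b} = (-1)^{a-b}\,\bigl(\G_p(a)/\G_p(b)\bigr)\,\prod_{i:\, b+i\in p\Z_p,\ 0\le i<a-b}(b+i)$, and the last product is by definition $v_{a,b,p}$. When $b - a \in \Z_{\ge 0}$ one applies this to $\G(b)/\G(a)$ and inverts, which reciprocates $v_{a,b,p}$ as in the statement. Continuity of $\G_p$ together with the local constancy of $[-r]_0$, $\nu$, and $v_{a,b,p}$ in the relevant arguments lets one extend from $\Z$-valued arguments to $a,b\in\Q\cap\Z_p$ with $a-b\in\Z$; alternatively one may invoke Lemma~17 of \cite{LR}, where the factorial comparison $(n-1)! = (-1)^{n}\G_p(n)\prod_{j=1}^{\lfloor (n-1)/p\rfloor} jp$ is extended in exactly this way.

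The only genuine subtlety is the bookkeeping of arguments landing in $p\Z_p$: for \eqref{eq:poch-to-p-gamma} one must check that among $k\le p-1$ consecutive shifts of $r$ there is at most one multiple of $p$ and that it sits at position $[-r]_0$, and for \eqref{eq:gamma-to-p-gamma} one must match the full collection of such multiples with the definition of $v_{a,b,p}$ (taking care of the endpoint conventions ``between $a$ and $b$'' and the reciprocal convention when $b>a$). Everything else is routine telescoping and a sign count, $a+b=k$.
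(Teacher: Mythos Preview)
Your proof is correct and is essentially the approach the paper has in mind: the paper does not spell out a proof but refers to Lemma~17 of \cite{LR}, whose argument is exactly the telescoping of the functional equation \eqref{eq:p-gamma-functional} that you carry out. One minor remark: the continuity step you mention at the end is unnecessary, since your telescoping argument already works directly for any $b\in\Z_p$ and $a-b\in\Z_{\ge 0}$ (the classical identity $\G(a)/\G(b)=(b)_{a-b}$ likewise needs no integrality of $b$), so nothing has to be extended from integer arguments.
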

We now consider the analytic properties of $\G_p$.  Define functions
\begin{equation}\label{eq:Gi-defn}
    G_i(x) \colonequals \frac{\frac{d^i}{dx^i}\G_p(x)}{\G_p(x)}.
\end{equation}
The following result of the second author and Ramakrishna allows us to approximate quotients of $p$-adic $\G$-functions in terms of these $G_i$ functions.
\begin{theorem}[Theorem 14, \cite{LR}]\label{thm:LR}
    Suppose $p \geq 5, r \in \N, a \in \Z_{p}, m \in \C_{p}$ satisfies $\nu_{p}(m) \geq 0$ and $t \in \{0,1,2\}$. Then 
    \[
        \frac{\Gamma_{p}\left(a+mp^{r}\right)}{\Gamma_{p}(a)} \equiv \sum_{k=0}^{t} \frac{G_{k}(a)}{k!} (mp^{r})^{k} \pmod{p^{(t+1)r}}.
    \]
    The above result holds for $t = 4$ if $p \geq 11$.
\end{theorem}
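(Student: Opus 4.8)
The plan is to prove \Cref{thm:LR} by Taylor-expanding $\Gamma_p$ about $a$ and estimating the resulting tail. First I would invoke the standard fact that Morita's $\Gamma_p$ is locally analytic: on the residue disk $a+p\Z_p$ it coincides with its Taylor expansion about $a$, and that expansion converges throughout $a+p\Z_p$. Since $z\colonequals mp^r$ has $\nu_p(z)\ge r\ge 1$, it lies in $p\Z_p$, so
\[
  \Gamma_p(a+mp^r)=\sum_{n\ge 0}\frac{\Gamma_p^{(n)}(a)}{n!}\,(mp^r)^n .
\]
Dividing by the unit $\Gamma_p(a)$ and using \eqref{eq:Gi-defn}, which gives $\Gamma_p^{(n)}(a)=G_n(a)\,\Gamma_p(a)$, we get
\[
  \frac{\Gamma_p(a+mp^r)}{\Gamma_p(a)}=\sum_{n\ge 0}\frac{G_n(a)}{n!}\,(mp^r)^n ,
\]
so the assertion is equivalent to $\nu_p\!\bigl(G_n(a)(mp^r)^n/n!\bigr)\ge (t+1)r$ for every $n\ge t+1$.

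The crucial input is that $G_n(a)\in\Z_p$ for all $n\ge 0$ and all $a\in\Z_p$. I would prove this by expressing $G_n=\Gamma_p^{(n)}/\Gamma_p$ as the complete Bell polynomial — which has non-negative integer coefficients — in the successive derivatives of $\log\Gamma_p$, namely the $p$-adic digamma $\psi_p$ and its derivatives $\psi_p^{(j)}$; this reduces the claim to $\psi_p^{(j)}(a)\in\Z_p$. The latter follows from the functional equation \eqref{eq:p-gamma-functional}: repeatedly differentiating its logarithm gives $\psi_p^{(j)}(x+1)-\psi_p^{(j)}(x)=(-1)^j j!\,x^{-j-1}$ for $x\in\Z_p^\times$ and $\psi_p^{(j)}(x+1)=\psi_p^{(j)}(x)$ for $x\in p\Z_p$, so the value at an arbitrary $a\in\Z_p$ differs from the value at a fixed base point ($0$ or $1$) by an element of $\Z_p$, and integrality at the base point is classical. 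Granting $G_n(a)\in\Z_p$, we obtain $\nu_p\!\bigl(G_n(a)(mp^r)^n/n!\bigr)\ge rn-\nu_p(n!)=rn-\tfrac{n-s_p(n)}{p-1}$, where $s_p(n)$ denotes the base-$p$ digit sum of $n$.

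It then remains to verify $rn-\nu_p(n!)\ge (t+1)r$ for all $n\ge t+1$ and all $r\ge 1$. The term $n=t+1$ forces $\nu_p\bigl((t+1)!\bigr)=0$, i.e.\ $p>t+1$ (this already excludes $p=5$ when $t=4$); the remaining terms $n\ge t+2$ reduce, after using $r\ge 1$, to the elementary estimate $(n-t-1)(p-1)\ge n-s_p(n)$. The lower bounds $p\ge 5$ (for $t\le 2$) and $p\ge 11$ (for $t\le 4$) are what keep all of these — together with the integrality of $G_n(a)$ for the small indices $n\le t+1$, which is the most delicate point for small $p$ — simultaneously valid. I expect the main obstacle to be exactly this integrality/growth statement for $G_n(a)$, equivalently the control of the denominators of the $p$-adic polygamma values, done uniformly in $a$ and precisely enough (tracking the digit-sum term) to land on the sharp cutoff for $p$; once that is in hand, the Taylor expansion and the concluding numerical bookkeeping are routine.
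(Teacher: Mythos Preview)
The paper does not give its own proof of this statement: it is quoted verbatim as Theorem~14 of \cite{LR} and then used as a black box in \Cref{prop:general-perturbation} and the supercongruence proofs. So there is no in-paper argument to compare your sketch against.

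On the merits of your sketch: the overall architecture---Taylor expand $\Gamma_p$ on the residue disk, divide by $\Gamma_p(a)$, and bound the tail termwise---is the natural one and is indeed the shape of the argument in \cite{LR}. The reduction of $G_n(a)\in\Z_p$ to integrality of $\psi_p^{(j)}(a)$ via Bell polynomials, and the further reduction to a single base point via the logarithmic derivative of \eqref{eq:p-gamma-functional}, are both fine. The genuine gap is the sentence ``integrality at the base point is classical'': this is exactly the content of the theorem, not a triviality, and it is where the dependence on $p$ enters. You yourself flag this as ``the most delicate point'', but you do not actually establish it; without it, the proof is circular.

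There is also a numerical inconsistency that signals the gap. Granting $G_n(a)\in\Z_p$ for all $n$, your tail bound $rn-\nu_p(n!)\ge (t+1)r$ for $n\ge t+1$ requires only $p>t+1$; for $t=4$ this would give $p\ge 7$, whereas the stated hypothesis is $p\ge 11$. So either the constraint in \cite{LR} is not sharp from your method, or---more likely---the blanket integrality $G_n(a)\in\Z_p$ fails for some small $p$ and one must instead prove a weaker but sufficient growth bound on $G_n(a)/n!$ (this is what \cite{LR} does, tracking the Mahler/Taylor coefficients of $\Gamma_p$ more carefully). Either way, the step you have labelled ``classical'' is precisely the step that carries the hypothesis on $p$, and it needs an actual argument.
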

This result is the foundation of the \emph{$p$-adic perturbation} method.  The key idea of this method is to perturb the parameters of a hypergeometric datum by adding terms which are divisible by $p$.  When these perturbations are chosen cleverly, they can allow for better manipulations of the hypergeometric functions when viewed over the complex numbers or finite fields, while only giving small variations $p$-adically by \Cref{thm:LR}.  In the previous part of this series, we used rational functions to generate hypergeometric identities suitable to our purposes.  In this paper, we are able to exploit pre-existing hypergeometric identities, significantly simplifying the calculations.

Before moving to the supercongruences, we begin with an application of \Cref{thm:LR} to remove perturbations from quotients of $\G_p$-functions modulo high powers of $p$ by averaging over certain sums.
\begin{proposition}\label{prop:general-perturbation}
    Let $\ell \leq m$ be integers, $p$ be a prime, and let $\boldsymbol{\xi} = (\xi_1, \hdots, \xi_\ell, \hdots \xi_m) \subset \Z_p^m$.  Define
    \[
        C_{\ell}(\boldsymbol{\xi}) \colonequals \frac{\prod_{i=1}^\ell \G_p(\xi_i)}{\prod_{i=\ell+1}^m \G_p(\xi_i)}.
    \]
    Fix vectors $\boldsymbol{v} = (v_1, \hdots, v_m) \in \Z_p^m$ and $\boldsymbol{w} = (w_1, \hdots, w_n) \in \Z_p^n$ with $\boldsymbol{w}$ satisfying $\sum_{i=1}^n w_i = 0$ and $p \nmid n$.  Then
    \begin{equation}\label{eq:zeta-averaging-supercongruence}
        \frac{1}{n}\sum_{j=1}^n C_{\ell}(\boldsymbol{\xi}+w_j \boldsymbol{v}p) \equiv C_{\ell}(\boldsymbol{\xi}) \pmod{p^2}.
    \end{equation}
    Here we use $\boldsymbol{\xi}+w_j \boldsymbol{v}p$ to denote the vector $(\xi_1+ w_j v_1 p, \hdots, \xi_m + w_j v_m p)$.
\end{proposition}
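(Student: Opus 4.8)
The plan is to expand each term $C_\ell(\boldsymbol{\xi}+w_j\boldsymbol{v}p)$ using \Cref{thm:LR} with $r=1$ and $t=1$, sum over $j$, and use the hypothesis $\sum_j w_j = 0$ to kill the first-order term. Concretely, for each coordinate $i$ write
\[
    \frac{\G_p(\xi_i + w_j v_i p)}{\G_p(\xi_i)} \equiv 1 + G_1(\xi_i)\, w_j v_i p \pmod{p^2},
\]
which is valid since $\nu_p(w_j v_i) \geq 0$. Taking the appropriate product over $i \leq \ell$ in the numerator and $\ell < i \leq m$ in the denominator, and discarding all terms of $p$-adic valuation $\geq 2$, gives
\[
    \frac{C_\ell(\boldsymbol{\xi}+w_j\boldsymbol{v}p)}{C_\ell(\boldsymbol{\xi})} \equiv 1 + w_j p \left( \sum_{i=1}^\ell G_1(\xi_i) v_i - \sum_{i=\ell+1}^m G_1(\xi_i) v_i \right) \pmod{p^2}.
\]
Crucially, the coefficient $S := \sum_{i=1}^\ell G_1(\xi_i)v_i - \sum_{i=\ell+1}^m G_1(\xi_i)v_i$ is \emph{independent of $j$}; it depends only on $\boldsymbol{\xi}$ and $\boldsymbol{v}$.

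Next I would average over $j$. Summing the congruence above and dividing by $n$ (legitimate $p$-adically since $p \nmid n$, so $1/n \in \Z_p$) yields
\[
    \frac{1}{n}\sum_{j=1}^n \frac{C_\ell(\boldsymbol{\xi}+w_j\boldsymbol{v}p)}{C_\ell(\boldsymbol{\xi})} \equiv \frac{1}{n}\sum_{j=1}^n \left(1 + w_j p\, S\right) = 1 + \frac{pS}{n}\sum_{j=1}^n w_j = 1 \pmod{p^2},
\]
using the hypothesis $\sum_{i=1}^n w_i = 0$. Multiplying through by $C_\ell(\boldsymbol{\xi})$ (a $p$-adic unit, hence harmless) gives \eqref{eq:zeta-averaging-supercongruence}.

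The only genuine subtlety — and the step I would be most careful with — is the passage from the per-coordinate approximation to the approximation for the whole quotient $C_\ell$. One must check that when multiplying out $\prod_i (1 + G_1(\xi_i)w_j v_i p + O(p^2))$, every cross term (a product of two or more first-order terms) has valuation at least $2$ and is therefore absorbed into the error, and similarly that inverting the denominator factors $1 + (\cdots)p$ via geometric series contributes only $O(p^2)$ corrections beyond the displayed linear term. Since the number of coordinates $m$ is fixed (independent of $p$) and each $G_1(\xi_i) \in \Z_p$ with $w_j v_i \in \Z_p$, this is routine: finitely many terms each of valuation $\geq 2$. I would also note the edge case $n=1$, where $w_1 = 0$ forces $w_1\boldsymbol{v}p = 0$ and the statement is trivial. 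No other obstacle arises; the argument is a direct linearization-and-average.
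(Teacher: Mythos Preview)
Your proof is correct and follows essentially the same approach as the paper: linearize each $\G_p$-factor via \Cref{thm:LR}, multiply out to get $C_\ell(\boldsymbol{\xi}+w_j\boldsymbol{v}p)\equiv C_\ell(\boldsymbol{\xi})(1+w_j p\,S)\pmod{p^2}$, then average over $j$ and invoke $\sum_j w_j=0$. The only cosmetic difference is that the paper first uses the reflection formula \eqref{eq:p-gamma-reflection} to reduce to the case $\ell=m$ (so that $C$ is a pure product with no denominator), whereas you handle the denominator directly by inverting $1+(\cdots)p$ via geometric series; both routes produce the same linear term with opposite signs on the denominator contributions, and both are equally valid.
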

\begin{remark}
    We only require the mod $p^2$ supercongruence in this paper, but we note that the below proof can be adapted to prove the supercongruence holds modulo $p^{\min\left\{n, 4\right\}}$ for $p \geq 11$.  Additionally, we have observed computationally that the supercongruence often holds modulo $p^n$.  Proving this using our approach would require a strengthening of \Cref{thm:LR}. 
\end{remark}
\begin{proof}
    First, we show that we can reduce to the case that $\ell=m$.  If $\ell < m$, then for each $\ell+1 \leq i \leq m$ the reflection formula \eqref{eq:p-gamma-reflection} for $\G_p$ gives us
    \[
        \frac{1}{\G_p(\xi_i)} = (-1)^{x_i}\G_p(1-\xi_i), \qquad \frac{1}{\G_p(\xi_i+w_j v_ip)} = (-1)^{x_i} \G_p(1-\xi_i-w_j v_ip),
    \]
    where $x_i$ is the unique integer in $\left\{1, \hdots, p\right\}$ such that $\xi_i \equiv x_i \pmod{p}$.  Define
    \begin{align*}
        &{\hat{\boldsymbol\xi}} = \left\{\xi_1, \xi_2, \hdots, \xi_\ell, 1-\xi_{\ell+1}, \hdots, 1-\xi_m\right\} \\
        &\widehat{\boldsymbol{\xi}+w_j\boldsymbol{v}p} = \left\{\xi_1+w_j v_ip, \hdots, \xi_\ell+w_j v_\ell p, 1-\xi_{\ell+1} - w_j v_{\ell+1}p, \hdots, 1-\xi_m - w_j v_m p\right\}.
    \end{align*}
    Assuming \eqref{eq:zeta-averaging-supercongruence} holds for $\ell=m$, then modulo $p^2$
    \[
        \frac{1}{n}\sum_{i=1}^n C_\ell(\boldsymbol{\xi}+w_j \boldsymbol{v}p) = \frac{(-1)^{\sum_{i=\ell+1}^m x_i}}{n} \sum_{i=1}^n C_m(\widehat{\boldsymbol{\xi}+w_j \boldsymbol{v}p}) \equiv (-1)^{\sum_{i=\ell+1}^m x_i} C_m(\hat{\boldsymbol{\xi}}) = C_\ell(\boldsymbol{\xi}).
    \]
    Thus, we need only prove the case $\ell=m$.  As such, we suppress the subscript and simply write $C$ to denote $C_m$ for the remainder of the proof.  For each $1 \leq j \leq n$ the definition of $C$ yields
    \[
        C(\boldsymbol{\xi}+w_j \boldsymbol{v}p) = \prod_{i=1}^m \G_p(\xi_i + w_j v_i p).
    \]
    Using \Cref{thm:LR}, each term of the product can be approximated $p$-adically by
    \[
        \G_p(\xi_i+w_j v_ip) \equiv \G_p(\xi_i)(1+G_1(\xi_i)w_jv_i p) \pmod{p^2}.
    \]
    Thus,
    \begin{align*}
        C(\boldsymbol{\xi}+w_j \boldsymbol{v}p) &\equiv \prod_{i=1}^n \G_p(\xi_i)(1+G_1(\xi_i)w_j v_ip) \pmod{p^2} \\
        &\equiv C(\boldsymbol{\xi})\left(1+ \sum_{i=1}^n G_1(\xi_i) w_j v_i p\right) \pmod{p^2}.
    \end{align*}
    Hence,
    \[
        \frac{1}{n} \sum_{j=1}^n C(\boldsymbol{\xi} +w_j \boldsymbol{v}p) \equiv C(\boldsymbol{\xi})\left(1 + \frac{1}{n} \sum_{1 \leq i, j \leq n} G_1(\xi_i)w_j v_i p\right) \pmod{p^2}.
    \]
    Thus, our desired congruence holds if
    \begin{equation}
        \frac{1}{n} \sum_{1 \leq i, j \leq n} G_1(\xi_i) w_j v_i \equiv 0 \pmod{p}.
    \end{equation}
    This follows from separating out the sum over the $w_j$'s, which we assumed to be zero.
\end{proof}
\begin{remark}
    As a particular example, suppose that $n \mid p-1$ so that there exists a primitive $n^{th}$ root of unity $\zeta_n \in \Z_p$.  Then we can take $w = (1, \zeta_n, \zeta_n^2, \hdots, \zeta_n^{n-1})$.
\end{remark}
\subsection{A Well-Poised Primitive \texorpdfstring{$_{4}F_{3}(-1)$}{4F3(-1)} Supercongruence}\label{ss:4F3sc}
We now turn our attention to the first supercongruence \eqref{eq:length-4-super} in Theorem \ref{thm:mainpadic}, which we restate below as \Cref{prop:A-supercongruence} for the reader's convenience.  Recall that, for $1 < j < 11$ we define $M = \mathrm{lcm}(4,D)$ for $D=24/\gcd(j,24)$, and
\[
    \HD_4\left(j/12\right) = \left\{\left\{\frac{j}{12}, \frac{j}{12}, \frac{1}{2}, \frac{1}{2}\right\}, \left\{1, 1, \frac{j}{12}+\frac{1}{2}, \frac{j}{12}+\frac{1}{2}\right\}\right\}.
\]
As $\HD_4(j/12)$ is not defined over $\Q$, the $p$-adic behavior is different for $1 \leq j \leq 6$ then it is for $6 < j < 12$.  In this section, we only consider the case $j \leq 6$.
\begin{proposition}\label{prop:A-supercongruence}
    Let $1 \leq j \leq 6$ and set $r = j/12$.  For $p\equiv 1\pmod M$, we have
   
    \[
       \Omega_{j,\Q_p} F\left(\HD_4\left(j/12\right); -1\right)_{p-1} \equiv \left(a_p(f_{2,D}^\sharp)\right)_0 a_p(f^\sharp_{3, D}) \pmod{p^2},
    \]
    where $\Omega_{j,\Q_p}=\frac{\G_p(j/12)^2}{\G_p(j/12+1/2)^2,}$ $f_{3, D}^\sharp$ is given in \Cref{tab:main} and $\left(a_p(f_{2,D}^\sharp)\right)_0$ is given in \Cref{eq:mu-defn}. 
\end{proposition}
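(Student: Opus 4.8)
The plan is to carry out the $p$-adic analogue of the proofs of \Cref{thm:mainclassical} and \Cref{thm:mainGalois}. Over $\C$ we used Whipple's identity \eqref{eq:Whipple-4F3&companion} under the specialization \eqref{eq:ri-specialized} to write $F(\HD_4(r);-1)$ as $C(r)$ times $\pFq32{\frac12&\frac12&1-\frac r2}{&1&1+\frac r2}{1}$, the latter attached to the weight-three cusp form $\BK_2(1-\tfrac r2,1+\tfrac r2)$ lying in the Galois orbit $O(D)$; over $\fq$ the parallel input is McCarthy's \Cref{thm:wp}. In the $p$-adic setting I would aim for the truncated statement
\[
  F(\HD_4(r);-1)_{p-1}\;\equiv\;\widetilde C(r)\cdot \pFq32{\frac12&\frac12&1-\frac r2}{&1&1+\frac r2}{1}_{p-1}\pmod{p^2},
\]
where $\widetilde C(r)$ is the $p$-adic avatar of $C(r)=\frac1r\,\G\left(\frac{r+\frac12,r+\frac12}{r,r}\right)$: replace each $\G$ by $\G_p$ via \eqref{eq:gamma-to-p-gamma}, absorb the resulting signs and the power of $p$ coming from the denominators $(1+\tfrac r2)_k$ of the truncated $_3F_2(1)$, and carry along the Teichm\"uller factor produced by the constant $C_1=16$ in \eqref{eq:lambda-in-eta}; compare \eqref{eq:truncated_version}, which is exactly this congruence when $j=6$.

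First I would prove this truncated Whipple congruence. Since truncating a numerical hypergeometric identity at the $p^{\text{th}}$ term does not preserve it, I would perturb $r$ to $r+mp$ for $m$ ranging over a set with $\sum m=0$ (e.g.\ $\{1,\zeta_n,\dots,\zeta_n^{n-1}\}$ for some $n\mid p-1$, as in the remark after \Cref{prop:general-perturbation}), apply the \emph{exact} classical identity \eqref{eq:Whipple-4F3&companion} to each perturbed datum, and average, using \Cref{thm:LR} and \Cref{prop:general-perturbation} to return the $\G_p$-ratios and the truncated sums on both sides to their unperturbed values modulo $p^2$. Next I would merge $\widetilde C(r)$ with the normalizing factor $\Omega_{j,\Q_p}=\G_p(r)^2/\G_p(r+\tfrac12)^2$ and simplify using the reflection formula \eqref{eq:p-gamma-reflection} and the $\G_p$ duplication formula (cf.\ \cite{CohenII}), exactly as in the closing computation of the proof of \Cref{lem:f2D}; this should identify $\Omega_{j,\Q_p}\,\widetilde C(r)$, up to the explicit $\G_p$/Teichm\"uller factor that converts the $_3F_2(1)$ into $a_p(f_{3,D}^\sharp)$, with $\left(a_p(f_{2,D}^\sharp)\right)_0$ as in \eqref{eq:mu-defn}. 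Finally, the truncated $_3F_2(1)$ is treated by the length-three part of the EHMM from \cite{HMM1} (the formal-group-law argument of \cite[\S3.3]{HMM1} together with \Cref{thm:main}, which underlies \Cref{thm:L-Values}): it equals, modulo $p^2$, a known $\G_p$-factor times $a_p(f_{3,D}^\sharp)$, where one uses that $f_{3,D}^\sharp$ is ordinary at $p\equiv1\pmod M$ so that $a_p(f_{3,D}^\sharp)$ agrees modulo $p^2$ with its $p$-adic unit root, which is the quantity a truncated hypergeometric sum records. Multiplying the three identifications yields the proposition.

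The main obstacle will be the truncated Whipple congruence. Whipple's formula is only a numerical identity at the fixed argument $-1$, the truncated $_4F_3(-1)$ and truncated $_3F_2(1)$ are genuinely different $p$-adic numbers (the latter not even $p$-integral in general), and forcing them to agree modulo $p^2$ requires the perturbation-and-average device above; one must verify that each perturbed datum stays in the range of \Cref{thm:LR} (arguments in $\Z_p$, $p\geq5$), that the perturbed $\BK_2$-function on the right remains a cusp form so the length-three supercongruence of \cite{HMM1} still applies, and that the $\G\leftrightarrow\G_p\leftrightarrow\g$ bookkeeping — the signs $(-1)^{x_0}$, the missing multiples of $p$ in \eqref{eq:gamma-to-p-gamma}, and the reflection identity $\G_p(x)\G_p(1-x)=(-1)^{x_0}$ standing in for the transcendental factors $\pi/\sin(\pi r)$ of the complex computation — is tracked consistently. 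The hypothesis $j\leq 6$ is used here to keep the relevant $\G_p$-arguments and unit-root structure parallel to the $\C$-case and to let \Cref{cor:Jac-Gp} apply as in \Cref{lem:f2D}; the complementary range, where the $p$-adic behavior differs because $\HD_4(j/12)$ is not defined over $\Q$, is excluded in this section.
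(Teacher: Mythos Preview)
Your overall plan mirrors the paper's, but two steps do not close as written.

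First, perturbing $r$ by $mp$ does not make either side of Whipple's identity \eqref{eq:Whipple-4F3&companion} terminate: with $r_1=r_2=r+mp$ and $r_3=r_4=\tfrac12$ none of the upper parameters is a nonpositive integer, so both the ${}_4F_3(-1)$ and the ${}_3F_2(1)$ remain infinite complex series, and there is no finite rational identity to carry over to $\Z_p$. \Cref{prop:general-perturbation} is applied term by term to a \emph{finite} sum of $\G_p$-ratios, so it has nothing to act on here. The paper instead perturbs the two $\tfrac12$'s to $\tfrac{1\pm p}{2}$: the upper parameter $\tfrac{1-p}{2}\in\Z_{\le 0}$ forces both sides to terminate at $k=\tfrac{p-1}{2}$, and since every coefficient is visibly invariant under $p\mapsto -p$, a single application of \Cref{prop:general-perturbation} with $n=2$, $\boldsymbol{w}=(1,-1)$ strips the perturbation. (The factor of $p$ in the resulting congruence comes from converting the $\Gamma$-ratio $C(r_i)$ to $\G_p$ via \eqref{eq:gamma-to-p-gamma}, not from the denominator $(1+\tfrac r2)_k$.)

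Second, even granting the truncated Whipple step, the ${}_3F_2$ you land on has last lower parameter $1+\tfrac r2>1$, outside the hypothesis $q_n\le 1$ of \Cref{thm:main} and of Theorem~2.3 of \cite{HMM1}. This is not a bookkeeping issue: the paper spends \Cref{lem:key1} on it. There one specializes the Kummer relation \eqref{eq:Kummer} with an \emph{asymmetric} $p$-deformation (the two choices $x=\pm1$ terminate at different indices, and one must check separately that the extra tail is $O(p^2)$), averages via \Cref{prop:general-perturbation}, and obtains
\[
p\,\pFq32{\frac12&\frac12&1-\frac r2}{&1&1+\frac r2}{1}_{p-1}\equiv\frac{-\G_p\left(\frac r2\right)\G_p(r)}{\G_p\left(\frac{1+r}2\right)\G_p\left(r+\frac12\right)}\,\pFq32{\frac12&\frac12&\frac r2}{&1&r+\frac12}{1}_{p-1}\pmod{p^2},
\]
whose new lower parameter $r+\tfrac12\in(\tfrac12,1]$ is in range. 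Only after this does Theorem~2.3 of \cite{HMM1} convert the truncation to an $H_p$-value, and a further finite-field Kummer step (equation~(4.25) of \cite{Greene}) is needed to reach the $\P$-function of \eqref{eq:4.11} and extract $a_p(f_{3,D}^\sharp)$. Your sketch passes from the ${}_3F_2$ with parameter $1+\tfrac r2$ directly to the EHMM output, and that step does not go through.
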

We will obtain this by applying \Cref{prop:general-perturbation} to classical hypergeometric identities.  In particular, we will use the ${}_4F_3$ case of Whipple's formula \eqref{eq:Whipple-4F3&companion} as well as the following classical Kummer relation which holds whenever both sides converge.
\begin{equation}\label{eq:Kummer}
    \pFq{3}{2}{a&b&c}{&d&e}{1} = \frac{\Gamma(e)\Gamma(d+e-a-b-c)}{\Gamma(e-a)\Gamma(d+e-b-c)} \pFq{3}{2}{a&d-b&d-c}{&d&d+e-b-c}{1}.
\end{equation}
We first prove the following lemma using \Cref{prop:general-perturbation} which will be a necessary step in reducing $F(\HD_4(j/12); -1)$ to the case where Theorem 2.3 of \cite{HMM1} can be used to obtain the Fourier coefficients on the right-hand side of \Cref{prop:A-supercongruence}.
\begin{lemma}\label{lem:key1}
    Let $1 \leq j \leq 6$ and $r=j/12$.  If $p \equiv 1 \pmod{M}$, then modulo $p^2$,
    \begin{align*}
        p \pFq{3}{2}{\frac{1}{2}&\frac{1}{2}&1-\frac{r}{2}}{&1&1+\frac{r}{2}}{1}_{p-1} \equiv \frac{-\G_p\left(\frac{r}{2}\right)\G_p(r)}{\G_p\left(\frac{1+r}{2}\right)\G_p\left(r+\frac{1}{2}\right)} \pFq{3}{2}{\frac{1}{2}&\frac{1}{2}&\frac{r}{2}}{&1&r+\frac{1}{2}}{1}_{p-1}.
    \end{align*}
\end{lemma}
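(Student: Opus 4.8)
The plan is to deduce \Cref{lem:key1} from the classical Kummer ${}_{3}F_{2}(1)$ transformation \eqref{eq:Kummer}, promoted from an identity of convergent complex series to a congruence between the two truncations by the $p$-adic perturbation method built on \Cref{thm:LR} and \Cref{prop:general-perturbation}. Applying \eqref{eq:Kummer} with $a=c=\tfrac12$, $b=1-\tfrac r2$, $d=1$, $e=1+\tfrac r2$ sends the left-hand ${}_{3}F_{2}(1)$ of \Cref{lem:key1} to $\dfrac{\Gamma(1+r/2)\,\Gamma(r)}{\Gamma((1+r)/2)\,\Gamma(r+1/2)}$ times the right-hand one; both series converge, their excesses $\sum q_i-\sum r_i$ being $r>0$ and $\tfrac{1+r}2>0$ respectively. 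Since neither series terminates this is merely an equality of complex numbers, so the real content is to identify what survives after truncating at $p-1$ and reducing modulo $p^{2}$.

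First I would run the $p$-adic valuation analysis of the summands, which is where the hypothesis $p\equiv 1\pmod M$ is used. Writing $r=j/12$ with $1\le j\le 6$, the congruence pins down the first $p$-adic digits $[r/2]_{0}$, $[r]_{0}$, and with $k$ ranging over $0,\dots,p-1$ one checks: $(\tfrac12)_{k}$ first acquires a factor $p$ near $k=\tfrac p2$; the numerator Pochhammer $(1-\tfrac r2)_{k}$ acquires one only for $k\ge[r/2]_{0}\ (\ge\tfrac{p-1}2)$; while the \emph{denominator} Pochhammer $(1+\tfrac r2)_{k}$ acquires one already on an initial block of indices that, for $j\le 6$, lies strictly below $\tfrac{p-1}2$. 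Hence the $k$-th term of the left-hand truncation has $p$-adic valuation $-1$ on a middle block of indices and valuation $\ge 1$ for $k\gtrsim\tfrac p2$, so $p\cdot{}_{3}F_{2}[\cdots]_{p-1}$ is a $p$-adic integer whose class mod $p^{2}$ depends only on the first half of the sum; this denominator divisibility is exactly the origin of the stray factor $p$ in the statement, while the digit counts together with $\Gamma(1+\tfrac r2)=\tfrac r2\Gamma(\tfrac r2)$ produce the overall sign. A parallel count shows the right-hand truncation is already integral and that its class mod $p^{2}$ likewise comes from its first half of terms, the matching of the two surviving ranges being governed by the double-sum rearrangement that underlies \eqref{eq:Kummer}.

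To make \eqref{eq:Kummer} legitimately available at the level of truncations, I would perturb: replace $1-\tfrac r2$ and $1+\tfrac r2$ by $1-\tfrac r2+w_{j}p$ and $1+\tfrac r2+w_{j}p$ for a tuple $(w_{1},\dots,w_{n})$ with $\sum_{j}w_{j}=0$ and $p\nmid n$ — e.g. powers of a primitive $n$-th root of unity when $n\mid p-1$, as in the remark after \Cref{prop:general-perturbation}. The perturbed datum stays well-poised and \eqref{eq:Kummer} still applies to the perturbed convergent series; expanding all Pochhammer symbols via \eqref{eq:poch-to-p-gamma} rewrites each truncated term as a ratio of $\Gamma_{p}$'s of the shape handled by \Cref{prop:general-perturbation}, so averaging over $j$ collapses the perturbation on both sides modulo $p^{2}$, and \Cref{prop:general-perturbation} applied to the transformation constant — after rewriting $\Gamma$ as $\Gamma_{p}$ by \eqref{eq:gamma-to-p-gamma} — disposes of the factor. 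Finally I would convert $\dfrac{\Gamma(1+r/2)\Gamma(r)}{\Gamma((1+r)/2)\Gamma(r+1/2)}$ into $-\tfrac1p\cdot\dfrac{\Gamma_{p}(r/2)\Gamma_{p}(r)}{\Gamma_{p}((1+r)/2)\Gamma_{p}(r+1/2)}$ using \eqref{eq:gamma-to-p-gamma}, the reflection formula \eqref{eq:p-gamma-reflection}, and a duplication formula for $\Gamma_{p}$, exactly as in the computation that closes the proof of \Cref{cor:ap(f2)decomposition}; the powers of $2$ and the $\Gamma_{p}(1/2)$ generated by duplication cancel between the two occurrences.

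\textbf{Main obstacle.} The delicate step is the truncation bookkeeping of the second paragraph: a classical hypergeometric identity is exact only for the full convergent series, so the assertion that $p\cdot{}_{3}F_{2}[\cdots]_{p-1}$ and the $\Gamma_{p}$-multiple of ${}_{3}F_{2}[\cdots]_{p-1}$ agree modulo $p^{2}$ has to be established by an index-by-index valuation analysis, using $p\equiv 1\pmod M$ to locate $[r/2]_{0}$, $[r]_{0}$ and the resulting block structure and to match the two truncated ranges. A secondary point is admissibility of the perturbation — the perturbed series must genuinely converge so that \eqref{eq:Kummer} is available, yet the perturbed truncation must differ from the unperturbed one by $O(p^{2})$ after averaging, which is what $\sum_{j}w_{j}=0$ and the $p\ge 5$ hypothesis of \Cref{thm:LR} provide (automatic here, since $p\equiv 1\pmod M$ with $M\ge 4$ forces $p\ge 5$). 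Any residual small primes are handled by direct computation.
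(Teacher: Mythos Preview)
Your overall plan—apply Kummer's relation \eqref{eq:Kummer} with $a=c=\tfrac12$, $b=1-\tfrac r2$, $d=1$, $e=1+\tfrac r2$, then perturb and average via \Cref{prop:general-perturbation}—is the paper's approach, and your identification of the target $\Gamma_p$-constant is correct. The gap is the perturbation itself. Replacing only $1-\tfrac r2$ and $1+\tfrac r2$ by $1-\tfrac r2+w_jp$ and $1+\tfrac r2+w_jp$ leaves both sides of \eqref{eq:Kummer} \emph{non-terminating}: you still have merely an identity of transcendental complex numbers, and \Cref{prop:general-perturbation}, which averages finite sums of $\Gamma_p$-quotients term by term, provides no mechanism for converting that into a congruence between truncations. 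The valuation bookkeeping of your second paragraph is accurate but does not bridge this gap either—it locates which terms of each truncation matter modulo $p^2$, not that the two surviving partial sums agree.

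The paper's perturbation is chosen precisely to force termination. With $x=\pm1$ it takes $a=\tfrac{1-xp}{2}$, $b=\bigl(1-\tfrac r2\bigr)(1+xp)$, $c=\tfrac12-xp$, $d=1-\tfrac{xp}{2}$, $e=1+\tfrac r2$; then the transformed ${}_3F_2$ on the right terminates at $(p-1)/2$ for both choices of $x$ (one of $\tfrac{1\pm p}{2}$ always sits among its numerator parameters), while the left-hand ${}_3F_2$ terminates at $(p-1)/2$ for $x=1$ and at $(1-\tfrac r2)(p-1)$ for $x=-1$, the tail past $(p-1)/2$ being $O(p^2)$. Now \eqref{eq:Kummer} is an exact identity of finite sums. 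A further payoff is that the perturbed $\Gamma$-prefactor has arguments pairing off with integer differences $\tfrac{1\mp xp}{2}$, so \eqref{eq:gamma-to-p-gamma} applies directly and produces the factor $\tfrac{2}{pr}$ (the origin of the stray $p$, and of the sign via $\Gamma_p(1+\tfrac r2)=-\tfrac r2\,\Gamma_p(\tfrac r2)$); with your unperturbed constant no two of $1+\tfrac r2,\,r,\,\tfrac{1+r}{2},\,r+\tfrac12$ differ by an integer, so \eqref{eq:gamma-to-p-gamma} is unavailable without the extra duplication argument you allude to. Averaging over $x=\pm1$ with $\boldsymbol w=(1,-1)$ then removes the perturbation on each side. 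Engineering termination is the missing idea.
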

\begin{proof}
    With $x=\pm 1$, we take $a = (1-xp)/2$, $b=\left(1-\frac{r}{2}\right)(1+p)$, $c=1/2-xp$, $d=1-xp/2$, and $e=1+r/2$ in \eqref{eq:Kummer}.  This gives
    \begin{singnumalign}\label{eq:super-perturbation}
        &\pFq{3}{2}{\frac{1-xp}{2}&\left(1-\frac{r}{2}\right)(1+xp)&\frac{1}{2}-xp}{&1-\frac{xp}{2}&1+\frac{r}{2}}{1} \\
        &\hspace{5mm}= \frac{\G\left(1+\frac{r}{2}\right)\G\left(r+\frac{xpr}{2}\right)}{\G\left(\frac{1+r+xp}{2}\right) \G\left(r+\frac{1+xp(r-1)}{2}\right)} \pFq{3}{2}{\frac{1-xp}{2}&\frac{1+xp}{2}&\frac{r+(r-3)xp}{2}}{&1-\frac{xp}{2}&r+\frac{1+xp(r-1)}{2}}{1}.
    \end{singnumalign}
    First, we observe that the ${}_3F_2$ on the right-hand side terminates at $(p-1)/2$ for both choices of $x$, as we will have the negative integer $(1-p)/2$ appearing in the upper parameters in each case.  On the left-hand side, our hypergeometric series similarly terminates at $(p-1)/2$ for $x=1$, but does not terminate until $(p-1)(1-r/2)$ for $x=-1$.  Note that this index is an integer as the condition $p \equiv 1 \pmod{M}$ guarantees that the denominator of $r/2$ divides $p-1$.  One can quickly see that the coefficients of the series will be divisible by $p^2$ for $(p-1)/2 < k \leq (p-1)(1-r/2)$ and so the corresponding portion of the sum will vanish modulo $p^2$.  As such, we assume that our index $k$ lies in the range $[0..(p-1)/2]$ throughout. \par
    Beginning with the left-hand side of \eqref{eq:super-perturbation}, we use \eqref{eq:poch-to-p-gamma} to rewrite the coefficients of the series as
    \begin{singnumalign}\label{eq:LHS-3F2-p-adic}
        &\frac{\left(\frac{1-xp}{2}\right)_k \left(\left(1-\frac{r}{2}\right)\left(1+xp\right)\right)_k \left(\frac{1}{2}-xp\right)_k}{(1)_k \left(1-\frac{xp}{2}\right)_k \left(1+\frac{r}{2}\right)_k} \\
        &= -\G_p \left(\frac{\frac{1-xp}{2} + k, \left(1-\frac{r}{2}\right)\left(1+xp\right)+k, \frac{1}{2}-xp+k, 1-\frac{xp}{2}, 1+\frac{r}{2}}{\frac{1-xp}{2}, \left(1-\frac{r}{2}\right)\left(1+xp\right), \frac{1}{2}-xp, k+1, k+1-\frac{xp}{2}, k+1+\frac{r}{2}}\right)\\
        &\hspace{20mm} \times\left(\frac{rp}{2}\right)^{-\nu\left(k, \frac{r(p-1)}{2}-1\right)}.
    \end{singnumalign}
    In the notation of \Cref{prop:general-perturbation}, we have
    \begin{singnumalign}\label{eq:LHS-p-adic}
        &p \cdot \pFq{3}{2}{\frac{1-xp}{2}&\left(1-\frac{r}{2}\right)(1+xp)&\frac{1}{2}-xp}{&1-\frac{xp}{2}&1+\frac{r}{2}}{1} \\
        &\hspace{30mm} \equiv -p\sum_{k=0}^{\frac{p-1}{2}} C_5(\boldsymbol{\xi}^{(1)}(k)+\boldsymbol{v}^{(1)} xp)\left(\frac{rp}{2}\right)^{-\nu\left(k, \frac{r(p-1)}{2}-1\right)} \pmod{p^2},
    \end{singnumalign}
    where
    \begin{align*}
        \boldsymbol{\xi}^{(1)}(k) &\colonequals \left( \frac{1}{2}+k, 1-\frac{r}{2}+k, \frac{1}{2}+k, 1, 1+\frac{r}{2}, \frac{1}{2}, 1-\frac{r}{2}, \frac{1}{2}, k+1, k+1, k+1+\frac{r}{2} \right) \\
        \intertext{and}
        \boldsymbol{v}^{(1)} &\colonequals \left( \frac{-1}{2}, 1+\frac{r}{2}, -1, \frac{-1}{2}, 0, \frac{-1}{2}, 1-\frac{r}{2}, -1, 0, \frac{-1}{2}, 0\right).
    \end{align*}
    On the right-hand side, we begin with the quotient of $\G$-values.  By \eqref{eq:gamma-to-p-gamma}, we have
    \[
        \frac{\G\left(1+\frac{r}{2}\right)\G\left(r+\frac{xpr}{2}\right)}{\G\left(\frac{1+r+xp}{2}\right) \G\left(r+\frac{1+xp(r-1)}{2}\right)} = \frac{\G_p\left(1+\frac{r}{2}\right)\G_p\left(r+\frac{xpr}{2}\right)}{\G_p\left(\frac{1+r+xp}{2}\right)\G_p\left(r+\frac{1+xp(r-1)}{2}\right)} \cdot \frac{2}{pr}.
    \]
    The coefficients of the ${}_3F_2$ can be expressed in terms of $\G_p$ as before, which yields
    \begin{singnumalign}\label{eq:RHS-p-adic}
        p &\frac{\G\left(1+\frac{r}{2}\right)\G\left(r+\frac{xpr}{2}\right)}{\G\left(\frac{1+r+xp}{2}\right) \G\left(r+\frac{1+xp(r-1)}{2}\right)} \pFq{3}{2}{\frac{1-xp}{2}&\frac{1+xp}{2}&\frac{r+(r-3)xp}{2}}{&1-\frac{xp}{2}&r+\frac{1+xp(r-1)}{2}}{1} \\
        &\hspace{15mm}= -p \sum_{k=0}^{\frac{p-1}{2}} C_6\left(\boldsymbol{\xi}^{(2)}(k)+\boldsymbol{v}^{(2)} xp\right) \cdot \frac{2}{pr} \cdot \left(\frac{p[(r-3)x+r]}{2}\right)^{\nu\left(k, \frac{(p-1)r}{2}\right)}
    \end{singnumalign}
    where
    \begin{align*}
        \boldsymbol{\xi}^{(2)}(k) &= \left(1+\frac{r}{2}, r, \frac{1}{2}+k, \frac{1}{2}+k, \frac{r}{2}+k, 1, \frac{1+r}{2}, \frac{1}{2}, \frac{1}{2}, \frac{r}{2}, k+1, k+1, r+k+\frac{1}{2}\right) \\
        \intertext{and}
        \boldsymbol{v}^{(2)} &= \left(0, \frac{r}{2}, \frac{-1}{2}, \frac{r-3}{2}, \frac{1}{2}, \frac{-1}{2}, \frac{1}{2}, \frac{-1}{2}, \frac{1}{2}, \frac{r-3}{2}, 0, \frac{-1}{2}, \frac{r-1}{2} \right).
    \end{align*}
    In this case we have an equality rather than only a congruence modulo $p^2$ due to the fact that the ${}_3F_2$ on the right-hand side always terminates at $(p-1)/2$.  Combining \eqref{eq:LHS-p-adic} and \eqref{eq:RHS-p-adic}, we may reinterpret \eqref{eq:super-perturbation} $p$-adically as
    \begin{singnumalign}\label{eq:super-perturbation-p-adic}
        &-p\sum_{k=0}^{\frac{p-1}{2}} C_5(\boldsymbol{\xi}^{(1)}(k)+\boldsymbol{v}^{(1)}xp)\left(\frac{rp}{2}\right)^{-\nu\left(k, \frac{r(p-1)}{2}-1\right)} \\
        &\hspace{5mm} \equiv -p \sum_{k=0}^{\frac{p-1}{2}} C_6\left(\boldsymbol{\xi}^{(2)}(k)+ \boldsymbol{v}^{(2)} xp\right) \cdot \frac{2}{pr} \cdot \left(\frac{p[(r-3)x+r]}{2}\right)^{\nu\left(k, \frac{(p-1)r}{2}\right)} \pmod{p^2}.
    \end{singnumalign}
    For ease of notation, we let $\mathrm{LHS}(x)$ and $\mathrm{RHS}(x)$ denote the left and right-hand sides of \eqref{eq:super-perturbation-p-adic}, respectively, viewed as functions of $x$.  As the congruence holds for $x=\pm 1$, we can average both sides over the two values of $x$ as follows.  We have
    \begin{equation}\label{eq:averaged-supercongruence}
        \frac{\mathrm{LHS}(1)+\mathrm{LHS}(-1)}{2} \equiv \frac{\mathrm{RHS}(1) + \mathrm{RHS}(-1)}{2} \pmod{p^2}.
    \end{equation}
    On the left-hand side, we use \Cref{prop:general-perturbation} with $n=2$ to conclude
    \begin{align*}
        \frac{\mathrm{LHS}(1)+\mathrm{LHS}(-1)}{2} &= -p\sum_{k=0}^{\frac{p-1}{2}}\left(\frac{C_5(\boldsymbol{\xi}^{(1)}(k) + \boldsymbol{v}^{(1)} xp ) + C_5(\boldsymbol{\xi}^{(1) (k)} - \boldsymbol{v}^{(1)} xp )}{2}\right) \\& \hspace{10mm}   \times \left(\frac{rp}{2}\right)^{-\nu\left(k, \frac{r(p-1)}{2}-1\right)} \\
    \end{align*} 
    Thus, modulo $p^2$,
    \begin{singnumalign}\label{eq:LHS-simplification}
        \frac{\mathrm{LHS}(1)+\mathrm{LHS}(-1)}{2} &   \equiv -p \sum_{k=0}^{\frac{p-1}{2}} C_5(\boldsymbol{\xi}^{(1)}(k)) \left(\frac{rp}{2}\right)^{-\nu\left(k, \frac{r(p-1)}{2}-1\right)}\\
        &\equiv p \sum_{k=0}^{\frac{p-1}{2}} \G_p\left(\frac{\frac{1}{2}+k, 1-\frac{r}{2}+k, \frac{1}{2}+k, 1+\frac{r}{2}}{\frac{1}{2}, 1-\frac{r}{2}, \frac{1}{2}, k+1, k+1, k+1+\frac{r}{2}}\right)\left(\frac{2}{pr}\right)^{\nu\left(k, \frac{r(p-1)}{2}-1\right)} \\
        &\equiv p\sum_{k=0}^{\frac{p-1}{2}} \frac{\left(\frac{1}{2}\right)_k^2 \left(1-\frac{r}{2}\right)_k }{(1)_k^2\left(1+\frac{r}{2}\right)_k} \\
        &\equiv p \pFq{3}{2}{\frac{1}{2} & \frac{1}{2} & 1-\frac{r}{2}}{& 1 & 1+\frac{r}{2}}{1}_{p-1}.
    \end{singnumalign}
    The final congruence holds as the coefficients of this hypergeometric function are divisible by $p$ for $(p-1)/2 < k \leq p-1$. \par
    The right-hand side is more complicated, as we have
    \begin{align*}
        &\frac{\mathrm{RHS}(1) + \mathrm{RHS}(-1)}{2} \\
        &= \frac{-2}{r} \biggr( \sum_{k=0}^\frac{(p-1)r}{2} \frac{C_6(\boldsymbol{\xi}^{(2)}(k)+ \boldsymbol{v}^{(2)}p)+C_6(\boldsymbol{\xi}^{(2)}(k) -\boldsymbol{v}^{(2)}p)}{2} \\
        &\hspace{20mm}+\sum_{k=\frac{(p-1)r}{2}+1}^{\frac{p-1}{2}} \frac{\frac{p(2r-3)}{2} C_6(\boldsymbol{\xi}^{(2)(k)}+ \boldsymbol{v}^{(2)} p) +\frac{3p}{2}C_6(\boldsymbol{\xi}^{(2)}(k)- \boldsymbol{v}^{(2)}p)}{2} \biggr)
    \end{align*}
    For the first sum, we can simplify the inside to $C_6(\boldsymbol{\xi}^{(2)}(k))$ modulo $p^2$ as before.  For the second sum, we note that we only require a congruence modulo $p$ due to the additional multiple of $p$ appearing.  The congruences
    \[
        C_6(\boldsymbol{\xi}^{(2)}(k) \pm \boldsymbol{v}^{(2)}p) \equiv C_6(\boldsymbol{\xi}^{(2)}(k)) \pmod{p}
    \]
    are immediate consequences of \Cref{thm:LR}, and so 
    \begin{align*}
        &\sum_{k=\frac{(p-1)r}{2}+1}^{\frac{p-1}{2}} \frac{\frac{p(2r-3)}{2} C_6(\boldsymbol{\xi}^{(2)}(k)+\boldsymbol{v}^{(2)}p) + \frac{3p}{2}C_6(\boldsymbol{\xi}^{(2)}(k)-\boldsymbol{v}^{(2)} p)}{2} \\
        &\hspace{30mm} \equiv \sum_{k=\frac{(p-1)r}{2}+1}^{\frac{p-1}{2}} \frac{pr}{2}C_6(\boldsymbol{\xi}^{(2)}(k)) \pmod{p^2}. \\
    \end{align*}
    Therefore, modulo $p^2$ we have
    \begin{singnumalign}\label{eq:RHS-simplification}
        \frac{\mathrm{RHS}(1)+\mathrm{RHS}(-1)}{2} &\equiv \frac{-2}{r}\sum_{k=0}^{\frac{p-1}{2}} C_6(\boldsymbol{\xi}^{(2)}(k)) (pr/2)^{\nu\left(k, \frac{(p-1)r}{2}\right)} \\
        &\equiv \frac{2}{r} \sum_{k=0}^{\frac{p-1}{2}} \G_p\left(\frac{1+\frac{r}{2}, r, \frac{1}{2}+k, \frac{1}{2}+k, \frac{r}{2}+k}{\frac{1+r}{2}, \frac{1}{2}, \frac{1}{2}, \frac{r}{2}, k+1, k+1, r+k+\frac{1}{2}}\right)\left(\frac{pr}{2}\right)^{\nu\left(k, \frac{(p-1)r}{2}\right)}  \\
        &\equiv \frac{2}{r} \frac{\G_p\left(1+\frac{r}{2}\right)\G_p(r)}{\G_p\left(\frac{1+r}{2}\right)\G_p\left(r+\frac{1}{2}\right)} \sum_{k=0}^{\frac{(p-1)}{2}} \frac{\left(\frac{1}{2}\right)_k^2\left(\frac{r}{2}\right)_k}{k!^2 \left(r+\frac{1}{2}\right)_k} \\
        &\equiv \frac{-\G_p\left(\frac{r}{2}\right)\G_p(r)}{\G_p\left(\frac{1+r}{2}\right) \G_p\left(r+\frac{1}{2}\right)} \pFq{3}{2}{\frac{1}{2} & \frac{1}{2} & \frac{r}{2}}{&1 & r+\frac{1}{2}}{1}_{p-1}.
    \end{singnumalign}
    As with the left-hand side, we use the fact that the coefficients of this ${}_3F_2$ are divisible by $p^2$ for $(p-1)/2 < k \leq p-1$ to go from the truncation at $(p-1)/2$ to the truncation at $p-1$ in the congruence.  Combining \eqref{eq:averaged-supercongruence}, \eqref{eq:LHS-simplification}, and \eqref{eq:RHS-simplification} yields the claim.
\end{proof}
\begin{proof}[Proof of \Cref{prop:A-supercongruence}]
    We begin by taking $r_{1}=r_{2}=r,r_{3} = \frac{1+p}{2},$ and $r_{4} = \frac{1-p}{2}$ in \eqref{eq:Whipple-4F3&companion} which gives
    \begin{singnumalign}\label{eq:Whipple-4-specialized}
        &\pFq{4}{3}{r&r&\frac{1+p}{2}&\frac{1-p}{2}}{&1&r+\frac{1-p}{2}&r+\frac{1+p}{2}}{-1} \\
        &\hspace{30mm}= \frac{\Gamma\left(r+\frac{1-p}{2}\right)\Gamma\left(r+\frac{1+p}{2}\right)}{\Gamma\left(1+r\right)\Gamma\left(r\right)} \cdot \, \pFq{3}{2}{\frac{1+p}{2}&\frac{1-p}{2}&1-\frac{r}{2}}{&1&1+\frac{r}{2}}{1}.
    \end{singnumalign}
    Note that both sides terminate at $\frac{p-1}{2}$.  By \eqref{eq:gamma-to-p-gamma} and \eqref{eq:p-gamma-functional},
    \[
        {\frac{\G\left(r+\frac{1-p}{2}\right)\G\left(r+\frac{1+p}{2}\right)}{\G(1+r)\G(r)}} = \frac{-\G_p\left(r+\frac{1-p}{2}\right)\G_p\left(r+\frac{1+p}{2}\right)}{\G_p(r)^2} p.
    \]
    We observe that both the $\G_p$ quotient above as well as the coefficients of each hypergeometric function appearing in \eqref{eq:Whipple-4-specialized} are symmetric in $p$ and $-p$.  Thus, we may use \Cref{prop:general-perturbation} to remove the $p$-linear terms modulo $p^2$, giving
    \[
        F(\HD_4(r); -1)_{p-1} \equiv \frac{-\G_p\left(r+\frac{1}{2}\right)^2}{\G_p(r)^2} pr \cdot \pFq{3}{2}{\frac{1}{2}&\frac{1}{2}&1-\frac{r}{2}}{&1&1+\frac{r}{2}}{1}_{p-1} \pmod{p^2}.
    \]
    To apply the supercongruence Theorem 2.3 of \cite{HMM1}, we require the bottom right entry to lie between $1/2$ and $1$.  This is not the case for us, but we can use the Kummer relation \eqref{eq:Kummer} to transform to a ${}_3F_2$ for which the supercongruence result of \cite{HMM1} does apply.  Namely, by \Cref{lem:key1} we have
    \begin{align*}
        &F(\HD_4(r); -1)_{p-1} \equiv \frac{-\G_p\left(r+\frac{1}{2}\right)^2}{\G_p\left(r\right)^2 } \cdot p \, \pFq{3}{2}{\frac{1}{2}&\frac{1}{2}&1-\frac{r}{2}}{&1&1+\frac{r}{2}}{1}_{p-1} \\
        &\hspace{20mm}\equiv \frac{\G_p\left(r+\frac{1}{2}\right)\G_p\left(\frac{r}{2}\right)}{\G_p(r)\G_p\left(\frac{1+r}{2}\right)} \cdot \, \pFq{3}{2}{\frac{1}{2}&\frac{1}{2}&\frac{r}{2}}{&1&r+\frac{1}{2}}{1}_{p-1} \pmod{p^2}.
    \end{align*}
    Written this way, Theorem 2.3 of \cite{HMM1} then gives
    \begin{equation*}
        F(\HD_4(r); -1)_{p-1} \equiv \frac{\G_p\left(r+\frac{1}{2}\right)\G_p\left(\frac{r}{2}\right)}{\G_p(r)\G_p\left(\frac{1+r}{2}\right)} \cdot \, H_{p}\left[\begin{matrix} \frac{1}{2}&\frac{1}{2}&1-\frac{r}{2} \smallskip \\  &1&\frac{1}{2}-r \end{matrix} \; ; \; 1;\omega_p \right] \pmod{p^2}.
    \end{equation*} 
    To ease notation, we will drop $\omega_p$ from the character sum $H_p$ below. Next, we use the following special case of Greene's finite field Kummer transformation---see equation (4.25) of \cite{Greene}
    \[
        H_{p}\left[\begin{matrix} \frac{1}{2}&\frac{1}{2}&1-\frac{r}{2} \smallskip \\  &1&\frac{1}{2}-r \end{matrix} \; ; \; 1 \right] = \omega_p^{(p-1)(r/2)}(-1) \frac{J_{\omega_p}(-r/2,r)}{J_{\omega_p}(-r/2,1/2-r/2)}  H_{p}\left[\begin{matrix} \frac{1}{2}&\frac{1}{2}&-\frac{r}{2} \smallskip \\  &1&\frac{r}{2} \end{matrix} \; ; \; 1 \right]. 
    \]
    The hypergeometric datum on the right-hand side is $\HD_3(r/2, r)$, where $\HD_3$ is as in \eqref{eq:4.11}. Modulo $p^2$ we then deduce
    \begin{align*}
        F(\HD_4(r); -1)_{p-1} &\equiv (-1)^{\frac{(p-1)r}{2}} \frac{\G_p\left(r+\frac{1}{2}\right) \G_p\left(\frac{r}{2}\right)}{\G_p(r)\G_p\left( \frac{1+r}{2} \right)} \frac{J_{\overline{\omega}_p} \left(\frac{r}{2}, 1-r\right)}{J_{\overline{\omega}_p}\left(\frac{r}{2}, \frac{1+r}{2}\right)} H_p \left(\HD_3\left(\frac{r}{2}, r\right); 1\right) \\
        &\hspace{-1mm} \stackrel{\eqref{eq:P-H}}{\equiv} (-1)^{\frac{(p-1)r}{2}} \frac{\G_p\left(r+\frac{1}{2}\right)^2}{\G_p\left(\frac{1+r}{2}\right)^2\G_p(r)} \mathbb{P}\left(\HD_3\left(\frac{r}{2}, r\right), 1, p\right) \\
        &\hspace{-2mm}\stackrel{\eqref{eq:4.11}}{\equiv} (-1)^{\frac{(p-1)r}{2}} \omega_p(1/4)^{(p-1)r} \frac{\G_p\left(r+\frac{1}{2}\right)^2}{\G_p\left(\frac{1+r}{2}\right)^2\G_p(r)} a_p(f_{3,D}^{\sharp}).
    \end{align*}
    Recalling $\Omega_{j, \Q_p} = \G_p(r)^2/\G_p(r+1/2)^2$ where $r=j/12$ we conclude that
    \begin{align*}
        \Omega_{j,\Q_p} {}F(\HD_4(r); -1)_{p-1} &\equiv (-1)^{\frac{(p-1)r}{2}} \omega_p(4)^{(1-p)r} \frac{\G_p(r)^2}{\G_p\left(r+\frac{1}{2}\right)^2} \frac{\G_p\left(r+\frac{1}{2}\right)^2}{\G_p\left(\frac{1+r}{2}\right)^2\G_p(r)} a_p(f_{3,D}^{\sharp}) \\
        & \equiv (-1)^{\frac{(p-1)r}{2}} \omega_p(1/4)^{(p-1)r} \frac{\G_p\left(r\right)}{\G_p\left(\frac{1+r}{2}\right)^2} a_p(f_{3,D}^{\sharp})
        \\ 
        &\hspace{-2mm}\overset{\eqref{eq:mu-defn}} \equiv \left(a_p(f_{2,D}^\sharp\right)_0 a_p(f_{3,D}^{\sharp})\pmod{p^2}.
    \end{align*}
\end{proof} 
\subsection{A Very Well-Poised Imprimitive \texorpdfstring{$_{5}F_{4}(-1)$}{5F4(-1)} Supercongruence}\label{ss:5F4sc}
We now turn our attention to the second congruence \eqref{eq:length-5-super} of \Cref{thm:mainpadic} which we restate in the following proposition:
\begin{proposition}\label{prop:super2}
    For any prime $p \equiv 1 \pmod{M}$,
    \[
        \Omega_{j,\Q_p}F(\HD_5(j/12);-1)_{p-1} \equiv
        \left(a_p(f_{2,D}^\sharp)\right)_1a_p(f_{3,D}^\sharp)\pmod{p^2}
    \]
    where $\left(a_p\left(f_{2,D}^\sharp\right)\right)_1 = p/\left(a_p\left(f_{2,D}^\sharp\right)\right)_0$.
\end{proposition}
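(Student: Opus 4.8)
The plan is to run the argument of \Cref{prop:A-supercongruence} with the primitive ${}_4F_3$ branch of Whipple's formula replaced by the very well--poised ${}_5F_4$ branch. First I would specialize the second identity of \eqref{eq:Whipple-4F3&companion} at $r_1=r_2=r=j/12$ and $r_3=\tfrac{1+p}{2}$, $r_4=\tfrac{1-p}{2}$. Since the datum is \emph{very} well--poised, the perturbation only enters $r_3,r_4$ together with $r_3^{*}=r+\tfrac{1-p}{2}$ and $r_4^{*}=r+\tfrac{1+p}{2}$, while the pair $1+\tfrac r2,\tfrac r2$ is untouched; the identity becomes
\[
 \pFq{5}{4}{r & 1+\tfrac r2 & r & \tfrac{1+p}{2} & \tfrac{1-p}{2}}{& \tfrac r2 & 1 & r+\tfrac{1-p}{2} & r+\tfrac{1+p}{2}}{-1}
 = \frac{\G(r+\tfrac{1-p}{2})\,\G(r+\tfrac{1+p}{2})}{\G(1+r)\,\G(r)}\; \pFq{3}{2}{\tfrac{1-r}{2} & \tfrac{1+p}{2} & \tfrac{1-p}{2}}{& \tfrac{1+r}{2} & 1}{1},
\]
in which the constant is \emph{exactly} the one appearing in the ${}_4F_3$ case. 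Both sides terminate at $(p-1)/2$, and a $p$-adic valuation count---using that $(\tfrac12)_k^2$ already contributes valuation $\geq 2$ for $(p-1)/2<k\leq p-1$---shows that $F(\HD_5(j/12);-1)_{p-1}$ agrees modulo $p^2$ with the left side truncated at $(p-1)/2$.

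Next I would convert $\G$ to $\G_p$ just as in \Cref{prop:A-supercongruence}, via \eqref{eq:gamma-to-p-gamma} and \eqref{eq:p-gamma-functional}, rewriting the Whipple constant as $\tfrac{-\G_p(r+\frac{1-p}{2})\,\G_p(r+\frac{1+p}{2})}{\G_p(r)^2}\,p$. Since this $\G_p$-quotient and the coefficients of all the hypergeometric series in sight are symmetric under $p\mapsto-p$, I would average the specialized identity over the two perturbations $x=\pm1$ and apply \Cref{prop:general-perturbation} with $n=2$ and $\boldsymbol w=(1,-1)$ to kill the $p$-linear terms modulo $p^2$. Using $\Omega_{j,\Q_p}=\G_p(r)^2/\G_p(r+\tfrac12)^2$ to cancel the $\G_p$-quotient at $p=0$ then yields, modulo $p^2$,
\[
 \Omega_{j,\Q_p}\,F(\HD_5(j/12);-1)_{p-1}\;\equiv\; -\,p\,r\;\pFq{3}{2}{\tfrac12 & \tfrac12 & \tfrac{1-r}{2}}{& 1 & \tfrac{1+r}{2}}{1}_{p-1}.
\]
Unlike the ${}_4F_3$ case, the bottom-right parameter $\tfrac{1+r}{2}$ already lies in $(1/2,1)$ for $0<r\leq1/2$, so no classical Kummer step (the analogue of \Cref{lem:key1}) is needed before the supercongruence machinery of \cite{HMM1} applies---and consequently the factor of $p$ above is \emph{not} absorbed, which is exactly what will make the final answer carry the non-unit root $\left(a_p(f_{2,D}^\sharp)\right)_1=p/\left(a_p(f_{2,D}^\sharp)\right)_0$ rather than the unit root $\left(a_p(f_{2,D}^\sharp)\right)_0$.

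To close the argument I would feed the truncated ${}_3F_2$ above into Theorem 2.3 of \cite{HMM1}: the datum $\{\{\tfrac12,\tfrac12,\tfrac{1-r}{2}\},\{1,1,\tfrac{1+r}{2}\}\}$ satisfies the hypotheses of \Cref{thm:main}, and its $\BK_2(\tfrac{1-r}{2},\tfrac{1+r}{2})=\BK_2(\tfrac{12-j}{24},\tfrac{12+j}{24})$ lies in the G2 orbit $O(D)$ since $\tfrac{1-r}{2}+\tfrac{1+r}{2}=1$ (\Cref{cor:G2-char}). Composing this with a finite-field Kummer transformation---equation (4.25) of \cite{Greene}---and with \eqref{eq:4.11}, exactly as in the endgame of \Cref{prop:A-supercongruence}, converts $\pFq{3}{2}{\tfrac12 & \tfrac12 & \tfrac{1-r}{2}}{& 1 & \tfrac{1+r}{2}}{1}_{p-1}$ modulo $p^2$ into a \emph{unit} multiple (an explicit product of $\G_p$-values and Teichm\"uller factors) of $a_p(f_{3,D}^\sharp)$; multiplying by $-pr$, simplifying the $\G_p$-quotient via the reflection and duplication formulae, and comparing with the closed form \eqref{eq:mu-defn} for $\left(a_p(f_{2,D}^\sharp)\right)_0$ then gives $\bigl(p/\left(a_p(f_{2,D}^\sharp)\right)_0\bigr)\, a_p(f_{3,D}^\sharp)=\left(a_p(f_{2,D}^\sharp)\right)_1 a_p(f_{3,D}^\sharp)$, as required.

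\textbf{Main obstacle.} The conceptual skeleton is routine once \Cref{prop:A-supercongruence} is in hand; the work is entirely in the $p$-adic bookkeeping, and two points need care. First, the valuation count showing that the tail $(p-1)/2<k\leq p-1$ of $F(\HD_5(j/12);-1)$ vanishes modulo $p^2$ is genuinely $j$-dependent, since the residues modulo $p$ of the ``bad'' linear factors in $(r)_k$, $(1+\tfrac r2)_k$, $(\tfrac r2)_k$ and $(\tfrac12+r)_k$ shift with $j$; one must check, over $1\leq j\leq 6$, that the numerator always accumulates $p$-divisibility at least as fast as the denominator, where the very well--poised identities $r_i+q_i=1+r$ are a convenient organizing device. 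Second, and more delicate in practice, is keeping exact track of the powers of $p$, the signs $(-1)^{(p-1)r/2}$, and the Teichm\"uller characters $\omega_p(4)^{(p-1)r}$ through the $\G\to\G_p$ conversions, the perturbation averaging, and the finite-field Kummer step, and then matching the accumulated scalar against \eqref{eq:mu-defn}. The imprimitivity of the ${}_5F_4$ (the pair $1+\tfrac r2,\tfrac r2$) simply rides along and, since the perturbation never touches it, introduces no essential new difficulty.
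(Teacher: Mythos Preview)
Your proposal is correct and follows essentially the same route as the paper: specialize the very well--poised branch of \eqref{eq:Whipple-4F3&companion} at $r_1=r_2=r$, $r_3=\tfrac{1+p}{2}$, $r_4=\tfrac{1-p}{2}$; convert the Whipple constant to $\G_p$ via \eqref{eq:gamma-to-p-gamma}; average over $p\mapsto -p$ using \Cref{prop:general-perturbation}; and then invoke Theorem~2.3 of \cite{HMM1} and \eqref{eq:4.11}. Two small corrections are worth noting. First, the factor after cancelling with $\Omega_{j,\Q_p}$ is $-p$, not $-pr$: the $\G_p$-quotient after averaging is exactly $-\G_p(r+\tfrac12)^2/\G_p(r)^2\cdot p$, and $\Omega_{j,\Q_p}$ cancels it to $-p$ with no leftover $r$. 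Second, the finite-field Kummer step (Greene's (4.25)) that you import ``exactly as in the endgame of \Cref{prop:A-supercongruence}'' is unnecessary here. In the ${}_4F_3$ case that step was needed because, after Theorem~2.3, the resulting $H_p$ had lower parameter $\tfrac12-r$, which is not directly in $\HD_3$-form; but in the ${}_5F_4$ case the $H_p$ already has lower parameter $\tfrac{1-r}{2}$, and converting to $\mathbb P$ via \eqref{eq:P-H} lands you immediately on $\mathbb P(\HD_3(1-r,\tfrac{1-r}{2});1;p)$, to which \eqref{eq:4.11} applies (this is exactly your observation that $\BK_2(\tfrac{1-r}{2},\tfrac{1+r}{2})$ is in the G2 orbit). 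The paper's proof takes this shorter path: Theorem~2.3 $\to$ \eqref{eq:P-H} $\to$ \eqref{eq:4.11} $\to$ \eqref{eq:mu-defn}, with no Kummer transformation.
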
    
\begin{proof}
    We begin by substituting
    \[ 
        r_1=r_2=r, \quad r_3=\frac{1+p}{2}, \quad r_4=\frac{1-p}{2}
    \]
    into \eqref{eq:Whipple-4F3&companion}.  This gives us
    \begin{equation*}
    \begin{split}
        &\pFq{5}{4}{r&1+\frac{r}{2}&r&\frac{1+p}{2}&\frac{1-p}{2}}{&\frac{r}{2}&1&r+\frac{1-p}{2}&r+\frac{1+p}{2}}{-1}\\
        &\hspace{50mm}= \frac{\G\left(r+\frac{1-p}{2}\right)\left(r+\frac{1+p}{2}\right)}{\G\left(1+r\right)\G\left(r\right)} \pFq{3}{2}{\frac{1-r}{2}&\frac{1+p}{2}&\frac{1-p}{2}}{&\frac{1+r}{2}&1}{1}.
    \end{split}
    \end{equation*}
    Under this perturbation, both hypergeometric series now terminate at $(p-1)/2$.  As in the previous proposition, the $\G$-quotient can be expressed $p$-adically as
    \begin{equation}\label{eq:Whipple-Gamma-simplified}
        \frac{\G\left(r+\frac{1-p}{2}\right)\G\left(r+\frac{1+p}{2}\right)}{\G(1+r)\G(r)} = \frac{-\G_p\left(r+\frac{1-p}{2}\right)\G_p\left(r+\frac{1+p}{2}\right)}{\G_p(r)^2} p.
    \end{equation}
    As in the previous proof, we may use \Cref{prop:general-perturbation} to remove the $p$-linear terms on both sides modulo $p^2$ before using Theorem 2.3 of \cite{HMM1} and \eqref{eq:2.9} to relate to the corresponding hypergeometric character sum.  Doing so gives us
    \begin{align*}
        \Omega_{j, \Q_p}F(\HD_5(r); -1) &= -p \, \pFq{3}{2}{\frac{1-r}{2}&\frac{1}{2}&\frac{1}{2}}{&\frac{1+r}{2}&1}{1} \pmod{p^2} \\
        &\equiv-p H_p \left[ \begin{matrix} \frac{1}{2} & \frac{1}{2} & \frac{1+r}{2} \\ & 1 & \frac{1-r}{2} \end{matrix} \; ; \; 1\right] \pmod{p^2} \\
        &\equiv \frac{-\G_p\left(\frac{1+r}{2}\right)}{\G_p(r)\G_p\left(\frac{1-r}{2}\right)}p \, \mathbb{P}\left(\HD_3\left(1-r, \frac{1-r}{2}\right), 1, p\right) \pmod{p^2} \\ 
        & \equiv -(-1)^{\frac{(p-1)r}{2}}\frac{\G_p\left(\frac{1+r}{2}\right)^2}{\G_p(r)}p \, \omega_p(4)^{(p-1)r} a_p\left(f_{3,D}^\sharp\right) \pmod{p^2} \\
        & \equiv \left(a_p(f_{2,D}^\sharp)\right)_1 a_p\left(f_{3,D}^\sharp\right) \pmod{p^2},
    \end{align*}
    as was to be shown.
\end{proof}
\begin{remark}
\begin{enumerate}
    \item In the $r=1/2$ case, the approach of \S 2.5 of \cite{Long} can be used to show that this congruence in fact holds modulo $p^3$.  The congruence does not hold beyond $p^2$ for the other cases.
    \item In Part I of  the series \cite[(5.29)]{HMM1}, we introduced the hypergeometric sum $E_{\mathrm{GK}}(\HD;\l)$ as the potential obstruction to a mod $p^2$ supercongruence.  With this, the supercongruence results in this paper can be rephrased as, for $p\equiv 1\pmod M$, 
    \begin{equation}
        E_{\mathrm{GK}}(\HD_4(j/12);-1)\equiv  \Omega_{j,\Q_p}^{-1} \cdot\left(a_p\left(f_{2,D}^\sharp\right)\right)_1 a_p\left(f_{3,D}^\sharp\right)/p \pmod p.
    \end{equation}
\end{enumerate}    
\end{remark}
\section{Appendix}
\subsection{Some \texorpdfstring{$L$}{L}-values of weight three CM modular forms in terms of \texorpdfstring{${}_3F_2(1)$}{3F2(1)}}
In the literature, hypergeometric values have been used to compute special $L$-values of modular forms and other arithmetic objects, such as Mahler measures of two-variable Laurent polynomials, as in \cite{Rogers-Zudilin} by Rogers--Zudilin. In this subsection, we record the special $L$-values of certain cuspforms related to the ${}_3F_2(\HD_d^{(3)};1)$ values, where $\HD_d^{(3)}:=\{\{\frac 12, \frac 1d,1-\frac1d\},\{1,1,1\}\}$ for $d=2$, $3$, $4$, and $6$.  The CM cases of the G2 family not in this form will be treated in the next subsection.
\begin{theorem}
    \[
        \pFq32{\frac 12&\frac12&\frac12}{&1&1}1 = \frac 2{\sqrt 3}\pFq32{\frac 12&\frac16&\frac56}{&1&1}1  =\frac 8{\pi} L(\eta(4\tau)^6,1)= \frac {16}{\pi^2} L(\eta(4\tau)^6,2);
    \]
    \[
    \pFq32{\frac 12&\frac13&\frac23}{&1&1}1 = \frac{6 \sqrt{3}}{\pi} L(\eta(2\tau)^3\eta(6\tau)^3,1)=\frac{18}{\pi^2} L(\eta(2\tau)^3\eta(6\tau)^3,2).
    \]
    Let $f_{8.3.d.a}(\tau)=\eta(\tau)^2\eta(2\tau)\eta(4\tau) \eta(8\tau)^2$. 
    \begin{align*}
        \pFq32{\frac 12&\frac14&\frac34}{&1&1}1 = &\sqrt 2\pFq32{\frac 12&\frac12&\frac12}{&1&1}{-1}\\
        =& \frac {12\sqrt 2}{\pi} L(f_{8.3.d.a}(\tau),1)= \frac {24}{\pi^2} L(f_{8.3.d.a}(\tau),2).
    \end{align*}
\end{theorem}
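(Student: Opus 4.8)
The plan is to trade every ${}_3F_2$ occurring in the statement for the square of a classical Gauss ${}_2F_1$ by Clausen's quadratic identity, to evaluate the resulting ${}_2F_1$ at $z=\pm1$ by the summation theorems of Gauss and Kummer, and to extract the $L$-values using the $\BK_2$-period formula of \Cref{lem:K-Lvalue}. Writing $\HD_d^{(3)}=\{\{\frac12,\frac1d,1-\frac1d\},\{1,1,1\}\}$, Clausen's formula ${}_2F_1(a,b;a+b+\tfrac12;z)^2={}_3F_2(2a,2b,a+b;2a+2b,a+b+\tfrac12;z)$, applied with $(a,b)=(\tfrac1{2d},\tfrac{d-1}{2d})$ and with $(a,b)=(\tfrac14,\tfrac14)$, gives for all admissible $z$
\[
\pFq32{\frac12&\frac1d&1-\frac1d}{&1&1}{z}={}_2F_1\!\left(\tfrac1{2d},\tfrac{d-1}{2d};1;z\right)^{2},\qquad
\pFq32{\frac12&\frac12&\frac12}{&1&1}{z}={}_2F_1\!\left(\tfrac14,\tfrac14;1;z\right)^{2}.
\]

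First I would settle the two purely hypergeometric identities, i.e.\ those carrying the constants $\tfrac2{\sqrt3}$ and $\sqrt2$. Gauss's theorem ${}_2F_1(a,b;1;1)=\G(1-a-b)/(\G(1-a)\G(1-b))$ turns each ${}_3F_2$ above at $z=1$ into $\pi\,\G(\cdot)^{-2}\G(\cdot)^{-2}$ with explicit rational arguments, and Kummer's theorem ${}_2F_1(a,a;1;-1)=\G(1+\tfrac a2)/(\G(1+a)\G(1-\tfrac a2))$ does the same for $\pFq32{\frac12&\frac12&\frac12}{&1&1}{-1}$. With these closed forms, the identity $\pFq32{\frac12&\frac12&\frac12}{&1&1}{1}=\tfrac2{\sqrt3}\pFq32{\frac12&\frac16&\frac56}{&1&1}{1}$ reduces to $\G(\tfrac7{12})^2\G(\tfrac{11}{12})^2=\tfrac2{\sqrt3}\G(\tfrac34)^4$, and $\pFq32{\frac12&\frac14&\frac34}{&1&1}{1}=\sqrt2\,\pFq32{\frac12&\frac12&\frac12}{&1&1}{-1}$ reduces to $\G(\tfrac58)^2\G(\tfrac98)^2=\tfrac\pi{\sqrt2}\G(\tfrac54)^2$; both follow from the Gauss multiplication formula (triplication at $z=\tfrac14$, resp.\ duplication at $z=\tfrac58$) together with Euler's reflection formula.

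For the $L$-value statements I would handle $d=2$ as the model case. Here $\HD_2^{(3)}$ is literally the $\BK_2$-datum $\{\{\frac12,\frac12,r\},\{1,1,s\}\}$ with $(r,s)=(\tfrac12,1)$, and since $N(\tfrac12)=4$ and $\BK_2(\tfrac12,1)(\tau)=\eta(\tau)^6$ by \eqref{eq:E}, \Cref{lem:K-Lvalue} directly gives $\pFq32{\frac12&\frac12&\frac12}{&1&1}{1}=\tfrac8\pi L(\eta(4\tau)^6,1)=\tfrac{16}{\pi^2}L(\eta(4\tau)^6,2)$ once one converts from the $P$-normalization of that lemma to the $F$-normalization via \eqref{eq:FtoP} (here the prefactor is $\pi^{-2}$). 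For $d=3$ and $d=4$ the forms $\eta(2\tau)^3\eta(6\tau)^3$ and $f_{8.3.d.a}=\eta(\tau)^2\eta(2\tau)\eta(4\tau)\eta(8\tau)^2$ are not $\BK_2$-functions, so I would rerun the mechanism behind \Cref{lem:E-eta} and \Cref{lem:K-Lvalue} in signature $d$: Ramanujan's theory of elliptic functions to alternative bases supplies a Hauptmodul $z_d(\tau)$ and a weight-one eta-quotient $\theta_d(\tau)$ with ${}_2F_1(\tfrac1{2d},\tfrac{d-1}{2d};1;z_d(\tau))=\theta_d(\tau)$, the signature-$d$ analog of ${}_2F_1(\tfrac12,\tfrac12;1;\lambda(\tau))=\theta_3(\tau)^2$; then, after a Clausen square, $\pFq32{\frac12&\frac1d&1-\frac1d}{&1&1}{z_d(\tau)}$ becomes the period integral of the weight-three cusp form obtained by pulling the Euler integrand back along $z_d$. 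Identifying that cusp form with $\eta(2\tau)^3\eta(6\tau)^3$, resp.\ $f_{8.3.d.a}$, by matching a few Fourier coefficients, and applying the Fricke involution $\tau\mapsto-1/(N\tau)$ exactly as in the proof of \Cref{lem:K-Lvalue} to relate the value at $1$ to the value at $2$, then produces the constants $\tfrac{6\sqrt3}\pi,\tfrac{18}{\pi^2}$ and $\tfrac{12\sqrt2}\pi,\tfrac{24}{\pi^2}$; alternatively one can bypass the parametrization entirely and simply quote the Chowla--Selberg/Damerell evaluation of $L(f,1)$ for each of these CM newforms, matching it against the $\G$-products of the previous step.

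The hard part will be precisely this $d=3,4$ $L$-value identification. Unlike $d=2$, these weight-three CM newforms are not produced by the $\BK_2$-construction of the paper, so one must either import an external Chowla--Selberg-type period formula or set up a fresh signature-$3$ and signature-$4$ modular parametrization; in either route the delicate step is pinning down the rational and quadratic normalizing constants (and the Fricke factor relating $L(f,1)$ to $L(f,2)$), not any conceptual obstruction, since the hypergeometric side is already fully explicit after Clausen plus Gauss/Kummer.
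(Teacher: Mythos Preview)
Your approach is correct and genuinely different from the paper's. The paper proves the two purely hypergeometric equalities by invoking a Bailey cubic transformation (for the $d=2$ vs.\ $d=6$ relation) and an unspecified quadratic transformation (for the $d=4$ relation), whereas you reduce both uniformly to explicit $\Gamma$-identities via Clausen's square, Gauss's and Kummer's summation theorems, and the multiplication formula; your route is more self-contained and arguably cleaner, since it avoids the specialized ${}_3F_2$ transformations entirely. For the $L$-values, you agree with the paper at $d=2$ (direct application of \Cref{lem:K-Lvalue} with $\BK_2(\tfrac12,1)=\eta^6$), and for $d=3,4$ the paper simply cites the external references Osburn--Straub and \cite{DMgrove} rather than carrying out anything like your proposed signature-$d$ parametrization or Chowla--Selberg computation. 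So the ``hard part'' you flag is in fact not proved in the paper either; it is outsourced. Your plan would give an independent derivation, at the cost of setting up the alternative-base machinery and tracking normalizations, while the paper's approach is shorter but relies on results proved elsewhere.
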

\begin{proof}  
    By a Bailey cubic hypergeometric transformation \cite[pp. 185]{AAR}, we have
    \[
        \pFq32{\frac 12&\frac16&\frac56}{&1&1}1 =\frac{\sqrt 3}2 \pFq32{\frac 12&\frac12&\frac12}{&1&1}1. 
    \]
    The result follows as stated in \Cref{sec:K2-lvalues}.  In the work of Osburn and Straub \cite{Osburn-Straub}, they obtain the special value
    \[
        \pFq32{\frac 12&\frac12&\frac12}{&1&1}{-1}= \frac {12}{\pi} L(f_{8.3.d.a}(\tau),1).
    \]
    By a quadratic transformation  (or identities between the corresponding modular forms, see \cite[Appendix]{HMM1} for example), one has
    \begin{multline*}
        \pFq32{\frac 12&\frac14&\frac34}{&1&1}1 =\sqrt 2\pFq32{\frac 12&\frac12&\frac12}{&1&1}{-1}\\=
        \frac {12\sqrt 2}{\pi} L(f_{8.3.d.a}(\tau),1)= \frac {24}{\pi^2} L(f_{8.3.d.a}(\tau),2).
    \end{multline*}
    For the  $d=3$ case, see \cite{DMgrove} for detail. 
\end{proof}
\subsection{From modular forms to new \texorpdfstring{$_3P_2(1)$}{3P2(1)} identities} \label{ss:P-identities}
Assume $(r,s)$ is in a Galois $\BK_2$ orbit. As a byproduct of Lemma \ref{lem:K-Lvalue} and the fact that $\{\BK_2(r_c,q_c)\}$ and  $\{\BK_2^\ast(r_c,q_c)\}$  are two bases for the corresponding invariant subspaces of the Hecke operators, one has identities among the ${}_3F_2(1)$-values, which may be induced from the classical hypergeometric identities. In the following, we provide an immediate corollary of \eqref{eq:Kummer} and less trivial types of identities. For a more detailed discussion and other types of relations, please see the forthcoming work by Rosen \cite{ENRosen}. 

For a shorthand notation, we let $P(r,s)$ denote the period value 
\begin{equation}\label{eq:P(r,s)}
     P(r,s):=\pi B(r,s-r)\pFq32{\frac 12&\frac12&r}{&1&s}1. 
\end{equation}
Using this notation, a specialized version of \Cref{eq:Kummer} is as follows.
\begin{equation}
    \label{cor:AAR3.3.5}
      P(r,s) =\sin(\pi r)\frac{B(s-r,r)^2}{B(s-1/2,1/2)^2} P(1-r,s-r+1/2).
\end{equation}
We first consider the cases when $m=4$ and $8$, which are CM cases.
\begin{proposition}\label{prop: ALids-m=8}  
    When $m=4$ and $8$, we have
    \[
        \BK_{2} \left(1/3,2/3 \right)= \BK_{2}^* \left(1/3,2/3  \right) \mbox{ and }  \BK_{2} \left(2/3,4/3  \right)= \BK_{2}^* \left(2/3,4/3 \right).
    \]
    \begin{align*}
        \BK_{2} \left(1/6,5/6 \right)(12\tau)=&  \BK_{2} \left(1/3,2/3 \right)(6\tau)-4 \BK_{2} \left(2/3,4/3 \right)(12\tau)\\
        \BK_{2}^\ast \left(1/6,5/6\right)(6\tau)=& \BK_{2} \left(2/3,5/6 \right)(6\tau) = 2\BK_{2} \left(1/3,2/3 \right)(12\tau)-\BK_{2} \left(2/3,4/3 \right)(6\tau)\\
        \BK_{2}^\ast \left(5/6,7/6\right)(6\tau)=& \BK_{2} \left(1/3,7/6 \right)(6\tau) = \BK_{2} \left(1/3,2/3 \right)(6\tau)-8\BK_{2} \left(2/3,4/3 \right)(12\tau).
    \end{align*}
    The corresponding $P$-values are algebraically dependent as follows. 
    \begin{align*}
        P\left(1/6,5/6\right)=&2^{-5/6}3^{-1/4}B(1/4,1/4)^2=2^{1/6}3^{-1/4}B(1/4,1/2)^2,\\
        2^{\frac 13}P\left(1/3,2/3\right) =&  (2+\sqrt 3)   P\left(2/3,4/3\right), \\
        2^{\frac 13}P\left(5/6,7/6\right) =&  P\left(1/6,5/6\right) =(1+\sqrt 3) P\left(2/3,4/3\right)   ,\\
        2^{\frac 13} P\left(1/3,7/6\right) =&   (2-\sqrt 3) P\left(2/3,5/6\right)= \sqrt 3 P\left(2/3,4/3\right). 
    \end{align*}
\end{proposition}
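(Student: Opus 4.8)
The first two equalities are immediate from \eqref{eq:E-E*}: since $\BK_2^\ast(r,s)=\BK_2(s-r,s)$ and $s=2r$ for $(r,s)=(\tfrac13,\tfrac23)$ and $(\tfrac23,\tfrac43)$, one has $s-r=r$. Likewise $\BK_2^\ast(\tfrac16,\tfrac56)=\BK_2(\tfrac23,\tfrac56)$ and $\BK_2^\ast(\tfrac56,\tfrac76)=\BK_2(\tfrac13,\tfrac76)$ are instances of \eqref{eq:E-E*}. The three remaining assertions are genuine identities among holomorphic weight-three cusp forms; after the indicated rescalings every form appearing lies in $S_3\big(\Gamma_0(72),(\tfrac{-1}{\cdot})\big)$ by \Cref{lem:K2-level} (using $N(\tfrac16)=N(\tfrac56)=12$ and $N(\tfrac13)=N(\tfrac23)=6$), so each may be verified by comparing the $q$-expansions of the eta quotients up to the Sturm bound for that space. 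Alternatively---and this is the viewpoint the statement alludes to---they follow from the explicit eigenform decompositions of \Cref{tab:G2-eigenforms} for $D=3$ and $D=6$ together with the Fricke relation $\BK_2(r,s)(-1/\tau)=2^{4s-8r}i\,\tau^3\,\BK_2^\ast(r,s)(\tau)$ of \Cref{lem:K2-level}, which carries the basis $\{\BK_2(r_c,q_c)\}$ of each Hecke-invariant subspace to the basis $\{\BK_2^\ast(r_c,q_c)\}$; writing one basis in terms of the other and invoking the tabulated eigenform combinations gives the stated linear relations.

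\textbf{Passing to $P$-values.} By \eqref{eq:FtoP} the period $P(r,s)$ of \eqref{eq:P(r,s)} is exactly the ${}_3P_2$-value appearing in \Cref{lem:K-Lvalue}, and combining that lemma with the elementary scaling $L(g(m\tau),1)=m^{-1}L(g,1)$ one sees the factor $N(r)$ cancels, leaving the clean formula $P(r,s)=2^{4r-1}\pi\,L(\BK_2(r,s),1)$. Since $L(\cdot,1)$ is linear, applying it to the three modular identities above and substituting this formula converts them into three linear relations in the two ``basic'' values $P(\tfrac13,\tfrac23)$ and $P(\tfrac23,\tfrac43)$: namely $P(\tfrac16,\tfrac56)=2^{1/3}P(\tfrac13,\tfrac23)-P(\tfrac23,\tfrac43)$, $P(\tfrac23,\tfrac56)=2^{4/3}P(\tfrac13,\tfrac23)-P(\tfrac23,\tfrac43)$, and $P(\tfrac13,\tfrac76)=P(\tfrac13,\tfrac23)-2^{2/3}P(\tfrac23,\tfrac43)$. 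The only care needed here is the bookkeeping of the powers of $2$ contributed by the factors $2^{4r-1}$ and the rescalings.

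\textbf{The proportionality and the anchor.} To tie $P(\tfrac13,\tfrac23)$ to $P(\tfrac23,\tfrac43)$ I would apply the corollary \eqref{cor:AAR3.3.5} of Kummer's relation with $(r,s)=(\tfrac23,\tfrac43)$, which gives $P(\tfrac23,\tfrac43)=\tfrac{\sqrt3}{2}\,\tfrac{B(2/3,2/3)^2}{B(5/6,1/2)^2}\,P(\tfrac13,\tfrac76)$; the reflection and duplication formulas for $\Gamma$ evaluate $\tfrac{B(2/3,2/3)^2}{B(5/6,1/2)^2}=2^{4/3}/3$, so the coefficient is $2^{1/3}/\sqrt3$. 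Feeding in the linear relation for $P(\tfrac13,\tfrac76)$ and solving the resulting linear equation yields $2^{1/3}P(\tfrac13,\tfrac23)=(2+\sqrt3)P(\tfrac23,\tfrac43)$. Substituting this back into the three linear relations produces the remaining statements (for instance $P(\tfrac16,\tfrac56)=(1+\sqrt3)P(\tfrac23,\tfrac43)$, $P(\tfrac23,\tfrac56)=(3+2\sqrt3)P(\tfrac23,\tfrac43)$, and $2^{1/3}P(\tfrac13,\tfrac76)=\sqrt3\,P(\tfrac23,\tfrac43)$, whence $(2-\sqrt3)P(\tfrac23,\tfrac56)=\sqrt3\,P(\tfrac23,\tfrac43)$), while one more application of \eqref{cor:AAR3.3.5}, now with $(r,s)=(\tfrac56,\tfrac76)$ (whose coefficient simplifies to $2^{-1/3}$ by the same $\Gamma$-manipulations), gives $2^{1/3}P(\tfrac56,\tfrac76)=P(\tfrac16,\tfrac56)$. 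It remains to fix the overall scale via the closed form $P(\tfrac16,\tfrac56)=2^{-5/6}3^{-1/4}B(\tfrac14,\tfrac14)^2$: the datum $\{\tfrac12,\tfrac12,\tfrac16;1,\tfrac56\}$ satisfies the hypotheses of Watson's ${}_3F_2$-summation theorem, and simplifying the resulting product of $\Gamma$-values by the reflection and Gauss multiplication formulas produces the stated expression (equivalently, via $P(\tfrac16,\tfrac56)=2^{-1/3}\pi\,L(\BK_2(\tfrac16,\tfrac56),1)$ and \Cref{tab:G2-eigenforms}, this is the closed-form evaluation of the critical value $L(f_{36.3.d.a},1)$ of the CM-by-$\Q(\sqrt{-1})$ newform $f_{36.3.d.a}$). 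The alternative form $2^{1/6}3^{-1/4}B(\tfrac14,\tfrac12)^2$ is then just the reflection identity $B(\tfrac14,\tfrac14)=\sqrt2\,B(\tfrac14,\tfrac12)$.

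\textbf{The main obstacle.} Once the three modular-form identities are established, everything else is mechanical: linear algebra over $\Q(2^{1/3},\sqrt3)$ combined with a few applications of \eqref{cor:AAR3.3.5} and routine evaluations of quotients of Beta values at sixths and thirds. The genuine content is the anchor evaluation $P(\tfrac16,\tfrac56)=2^{-5/6}3^{-1/4}B(\tfrac14,\tfrac14)^2$, which requires an input beyond these elementary manipulations---Watson's theorem together with the multiplication formula, or the CM/Chowla--Selberg description of $L(f_{36.3.d.a},1)$---and where one must be careful about the precise algebraic constant and the period normalization; a secondary nuisance is keeping track of the powers of $2$ throughout.
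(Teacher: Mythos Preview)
Your argument is correct and follows essentially the same route as the paper: verify the modular identities by comparing $q$-expansions, translate to linear relations among $P$-values via \Cref{lem:K-Lvalue}, use \eqref{cor:AAR3.3.5} together with the duplication/reflection formulas for $\Gamma$ to obtain the proportionality $2^{1/3}P(1/3,2/3)=(2+\sqrt3)P(2/3,4/3)$, then back-substitute. The one deviation is your anchor evaluation of $P(1/6,5/6)$: you invoke Watson's ${}_3F_2$-summation directly (with $a=\tfrac12$, $b=\tfrac16$, $c=\tfrac12$), whereas the paper first applies \eqref{eq:Kummer} to transform to the well-poised ${}_3F_2\bigl[\tfrac12,\tfrac13,\tfrac23;\tfrac76,\tfrac56;1\bigr]$ and then appeals to Dixon's theorem. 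Since Watson's theorem is classically obtained from Dixon's by precisely such a Kummer transformation, these are the same computation packaged slightly differently; your shortcut buys a line of algebra but no conceptual difference.
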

\begin{proof}
    Comparing the $q$-expansions, we have 
    \[
        \BK_{2} \left(1/6,5/6 \right)(12\tau)=  \BK_{2} \left(1/3,2/3 \right)(6\tau)-4 \BK_{2} \left(2/3,4/3 \right)(12\tau)
    \]
    and hence
    \begin{equation*}
        \begin{split}
            L\left(\BK_{2} \left(1/6,5/6 \right)(12\tau) , 1\right) =& L\left( \BK_{2} \left(1/3,2/3 \right)(6\tau), 1\right)-4L\left( \BK_{2} \left(2/3,4/3 \right)(12\tau), 1\right)\\
            =&L\left( \BK_{2} \left(1/3,2/3 \right)(6\tau), 1\right)-2L\left( \BK_{2} \left(2/3,4/3 \right)(6\tau), 1\right). 
        \end{split}
    \end{equation*}
    Other identities can be obtained in a similar way. 
    
    By Lemma \ref{lem:K-Lvalue}, \Cref{cor:AAR3.3.5}, $2^{\frac 16}\G(1/3)\G(5/6)=\sqrt{2\pi}\G(2/3)$ which follows from the duplication formula of $\G(\cdot)$, and $\pi=\G(1/3)\G(2/3)\sin(\pi/3)$ from the reflection formula of $\G(\cdot)$, we derive the algebraic relations 
    \[
        2^{\frac 13}P\left(1/3,2/3\right) =  (2+\sqrt 3)   P\left(2/3,4/3\right)
    \]
    and 
    \[
        P\left(1/6,5/6\right) = 2^{1/3}P(1/3,2/3)-P(2/3,4/3)= (1+\sqrt 3) P\left(2/3,4/3\right).
    \]
    To evaluate $P(1/6,5/6)$, we first use \eqref{eq:Kummer} and then apply Dixon's evaluation \cite[(2.2)]{Whipple25} and properties of gamma function.
    \begin{multline*} 
        \pFq{3}{2}{\frac12&\frac12&\frac16}{&\frac56&1}{1} = \frac{\Gamma(\frac23)}{\Gamma(\frac12)\Gamma(\frac76)} \pFq{3}{2}{\frac12&\frac13&\frac23}{&\frac76&\frac56}{1}
        =\frac{2^{3/2}}{3^{1/4}\pi^2}\G\left( \frac{\frac23,\frac14,\frac14,\frac14,\frac14}{\frac12, \frac16}\right),
    \end{multline*}
    which is equivalent to our claim. 
\end{proof}
The $P\left(1/3,2/3\right)$-value can also be obtained by \cite[Theorem 3.5.5(i)]{AAR} with $c=1/3$  followed by the reflection and multiplication formulae of $\G$-function. 

For other cases, we have the following relations between $\BK_2^\ast(m/24,1-m/24)=\BK_2(1-m/12,1-m/24)$ and $\BK_2(mj/24, 1-mj/24)$ in the same Hecke space, which lead to the $P$-identities. 
\begin{proposition}\label{prop: ALids} 
    For a fixed $m=1$, $2$, $3$, let $j \in \left(\Z/(12/m)\Z\right)^\times$.
    \begin{multline*}
        \BK_2\left(\frac{mj}{12},\frac 34 \pm \frac14+\frac{mj}{24}\right)\left(\frac{24}m\tau\right) = \\ \BK_2\left(\frac{mj}{24},1-\frac{mj}{24}\right)\left(\frac{48}m\tau\right)\mp 4  \BK_2\left(\frac 12+\frac{mj}{24},\frac32-\frac{mj}{24}\right)\left(\frac{48}m\tau\right),
    \end{multline*}
    here we use $\mp$ to denote the opposite sign choice from $\pm$.  In terms of $P$-values, 
    \[
        2^{1-mj/6}P\left(\frac{mj}{12},\frac34\pm\frac 14+\frac{mj}{24}\right) = P\left(\frac{mj}{24},1-\frac{mj}{24}\right)\mp P\left(\frac 12+\frac{mj}{24},\frac32-\frac{mj}{24}\right). 
    \]
\end{proposition}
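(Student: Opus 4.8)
The plan is to prove the displayed $\eta$-quotient identity first and then deduce the identity among $P$-values as a formal consequence, mirroring the proof of \Cref{prop: ALids-m=8}.

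\emph{Step 1 (the $\eta$-quotient identity).} For each $m\in\{1,2,3\}$ and each $j\in(\Z/(12/m)\Z)^\times$, one first checks that the three pairs $\bigl(\tfrac{mj}{12},\tfrac34\pm\tfrac14+\tfrac{mj}{24}\bigr)$, $\bigl(\tfrac{mj}{24},1-\tfrac{mj}{24}\bigr)$ and $\bigl(\tfrac12+\tfrac{mj}{24},\tfrac32-\tfrac{mj}{24}\bigr)$ all lie in $\mathbb S_2$; here the inequalities $0<r<s<3/2$, $r\neq 1$, $s\neq\tfrac12$ follow from $mj<12$ and the coprimality of $j$ with $12/m$, while $24s\in\Z$ and $8(r+s)\in\Z$ are immediate. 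The same coprimality gives $N\bigl(\tfrac{mj}{12}\bigr)=\tfrac{24}{m}$ and $N\bigl(\tfrac{mj}{24}\bigr)=N\bigl(\tfrac12+\tfrac{mj}{24}\bigr)=\tfrac{48}{m}$, so by \Cref{lem:K2-level} each of the three summands in the claimed identity is a holomorphic weight-three cusp form with nebentypus $\left(\tfrac{-2^{mj}}{\cdot}\right)$ on $\Gamma_0(N)$ for a common level $N$ (the lcm of the individual levels $N(r)N(s-r)$). Hence the difference of the two sides lies in the finite-dimensional space $S_3\bigl(\Gamma_0(N),\left(\tfrac{-2^{mj}}{\cdot}\right)\bigr)$, and the identity follows by comparing the finitely many Fourier coefficients up to the Sturm bound; equivalently, after dividing through by one of the (nonvanishing) $\eta$-quotients it becomes an identity between weight-zero modular functions holomorphic on the upper half-plane with poles only at the cusps, again settled by finitely many coefficients. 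One can also derive it more conceptually from a Landen-type quadratic transformation relating $\lambda\bigl(\tfrac{24}{m}\tau\bigr)$ and $\lambda\bigl(\tfrac{48}{m}\tau\bigr)$ together with \Cref{lem:E-eta} (these relations are of the Atkin--Lehner/level-raising type visible in \Cref{tab:G2-eigenforms}), but the coefficient comparison is the most direct route.

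\emph{Step 2 (from the $\eta$-identity to the $P$-identity).} Apply $L(\,\cdot\,,1)$ to both sides of the $\eta$-quotient identity. Using $L\bigl(f(c\tau),1\bigr)=c^{-1}L(f,1)$ for $c>0$ and observing that the level shifts $\tfrac{24}{m}$ and $\tfrac{48}{m}$ are precisely $N(r)$ for the corresponding first parameters, \Cref{lem:K-Lvalue} gives, for each summand,
\[
L\bigl(\BK_2(r,s)(N(r)\tau),1\bigr)=\frac{P(r,s)}{2^{4r-1}\,N(r)\,\pi}.
\]
Substituting, the factors of $\pi$ cancel, the ratio $\tfrac{48/m}{24/m}=2$ contributes one power of $2$, the factors $2^{4r-1}$ contribute $2^{mj/3-1}$ on the left and $2^{mj/6-1}$ and $2^{mj/6+1}$ on the right, and the explicit $4$ multiplies the last term; collecting the powers of $2$ (so $2-\tfrac{mj}{3}-(1-\tfrac{mj}{6})=1-\tfrac{mj}{6}$) gives exactly
\[
2^{1-mj/6}\,P\Bigl(\tfrac{mj}{12},\tfrac34\pm\tfrac14+\tfrac{mj}{24}\Bigr)=P\Bigl(\tfrac{mj}{24},1-\tfrac{mj}{24}\Bigr)\mp P\Bigl(\tfrac12+\tfrac{mj}{24},\tfrac32-\tfrac{mj}{24}\Bigr).
\]

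The main obstacle is Step 1: one must pin down the exact level and character of each $\eta$-quotient summand (via the Ligozat--Newman cusp conditions for $\eta$-quotients, as encoded in \Cref{lem:K2-level}) so that the Fourier-coefficient comparison is genuinely finite and rigorous, uniformly over the short list of admissible $(m,j)$. Once that is in place, Step 2 is purely formal.
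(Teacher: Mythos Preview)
Your proposal is correct and the deduction of the $P$-identity in Step~2 via \Cref{lem:K-Lvalue} matches the paper exactly, but your Step~1 takes a genuinely different route. You verify the $\eta$-quotient identity by placing all three summands in a common $S_3(\Gamma_0(N),\chi)$ and comparing Fourier coefficients up to the Sturm bound (with the Landen-type transformation mentioned only as an aside). The paper instead \emph{derives} the identity directly: it rewrites each $\BK_2(r,s)$ via \Cref{lem:E-eta} as $(\lambda/(1-\lambda))^r(1-\lambda)^s\theta_3^6$, and then invokes the explicit quadratic relations
\[
\frac{\lambda(\tau)^2}{1-\lambda(\tau)}=\frac{16\lambda(2\tau)}{(1-\lambda(2\tau))^2},\qquad (1-4t(\tau))(1-\lambda(2\tau))^{1/2}\theta_3(2\tau)^6=(1-\lambda(4\tau))\theta_3(4\tau)^6,
\]
together with $\lambda(2\tau)=16t(\tau)(1-4t(\tau))$ for the $\Gamma_0(8)$ Hauptmodul $t$, to obtain both sign cases simultaneously by pure algebraic manipulation. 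The paper's argument explains structurally \emph{why} the two $\BK_2$'s at level $48/m$ combine into one at level $24/m$ (it is exactly the Landen doubling $\tau\mapsto 2\tau$), and it handles all $(m,j)$ uniformly in a single computation; your Sturm-bound verification is rigorous but opaque and in principle requires a separate check for each admissible pair $(m,j)$.
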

\begin{proof} Below we let $t(\tau)$ be the Hauptmodul on $\G_0(8)$ such that $$\l(2\tau)=16t(\tau)(1-4t(\tau)).$$
    The identities between the modular forms follow from our construction and the classical results
    \[
        2^{4r-1}\BK_2(r,s)(\tau)=\left(\frac\l{1-\l}\right)^r(1-\l)^s \theta_3^6(\tau),
    \]
    \[
        \frac{\l(\tau)^2}{1-\l(\tau)}= \frac{16\l(2\tau)}{(1-\l(2\tau))^2}  \quad \mbox{ and }\quad (1-4t(\tau))(1-\l(2\tau))^{\frac12}\theta_3(2\tau)^6=(1-\l(4\tau))\theta_3(4\tau)^6.
    \]

    For a given $m$, we fix the choices $\l^{m/24}$ and $(1-\l)^{m/24}$. Similarly, we take $(1-\l(2\tau))^{1/2}=1-8t(\tau)$. Then 
    \begin{equation*}
        \begin{split}
            2\cdot 2^{\frac{mj}{12}}&\BK_2\left(mj/24,1-mj/24\right)\left(48\tau/m\right) \\&= 4\cdot  2^{4\cdot \frac{mj}{24}} \left(\frac\l{(1-\l)^2}\right)^{mj/24}(1-\l)\theta_3^6\left(48\tau/m\right)\\
            &=4\left(\frac{\l^2}{1-\l}\right)^{mj/24}(1-\l)^{\frac 12} \theta_3^6\left(24\tau/m\right) (1-4t(12\tau/m))\\
            &= 2 \left(\frac{\l^2}{1-\l}\right)^{mj/24}(1-\l)^{\frac 12} (1+(1-\l)^{\frac12})\theta_3^6\left(24\tau/m\right)\\
            &=2\left(\frac{\l^2}{(1-\l)^2}\right)^{mj/24}(1-\l)^{\frac{mj}{24}} (1-\l)^{\frac12}(1+(1-\l)^{\frac12})\theta_3^6\left(24\tau/m\right)\\
            &= 2^{\frac{mj}{12}}\BK_2\left(\frac{mj}{12},\frac 12+\frac{mj}{24}\right) \left(24\tau/m\right) + 2^{\frac{mj}{12}}\BK_2\left(\frac{mj}{12},1+\frac{mj}{24}\right) \left(24\tau/m\right).
        \end{split}
    \end{equation*}
    By the same argument, we have
    \begin{multline*}
        8\BK_2\left(\frac 12+\frac{mj}{24},\frac32-\frac{mj}{24}\right)\left(\frac{48}m\tau\right) =\\ 
        \BK_2\left(\frac{mj}{12},\frac  12+\frac{mj}{24}\right) \left(\frac{24}m\tau\right)  - \BK_2\left(\frac{mj}{12},1+\frac{mj}{24}\right) \left(\frac{24}m\tau\right).
    \end{multline*}
    These equations are equivalent to the identities stated in the proposition and the corresponding $P$-values  follow from \Cref{lem:K-Lvalue}. We will omit the details. 
\end{proof}
\subsection{Special \texorpdfstring{$L$}{L}-values  of \texorpdfstring{$f_{3,D}^\sharp$}{f3D} and some of their twists} 
Using identities in the previous subsection, one obtains the following applications.  For a given Hecke eigenform $f^\sharp$, the relationships listed below induce the functional equations  relating $L(f^\sharp,2)$ and $L(f^\sharp\otimes \chi,1)$ for an odd inner twist $\chi$ modulo $N$, which is guaranteed by the result of Shimura \cite[Theorem 1]{Shimura-special-zeta} that 
\[
    L(f^\sharp,2) \in \g(\chi) \pi i  L(f^\sharp\otimes \chi, 1) \cdot \Q(a_n(f^\sharp): n\geq 1),
\]
if $\chi(-1)=-1$,  where $\g(\chi)=\sum_{j=0}^{N-1}e^{2\pi i j/N}\chi(j)$.  In the end of this section, we provide the  $L$-values of the chosen newforms $f^\sharp$ and some functional equations relating $L(f^\sharp,2)$ and $L(f^\sharp\otimes \chi,1)$.
\begin{corollary}
    For $m=8$, let $f^\sharp$ be the newform $f_{36.3.d.a}(\tau)=\BK_{2} \left(1/3,2/3 \right)(6\tau)-2\BK_{2} \left(2/3,4/3 \right)(6\tau)$. Then 
    \[
        (1+\sqrt 3) P\left(2/3,4/3\right)=\frac{B(1/4,1/4)^2}{  2^{\frac56} 3^{\frac14}} =6\pi\cdot  2^{2/3}\cdot L(f^\sharp, 1)= 18\cdot 2^{2/3}\cdot  L(f^\sharp, 2).
    \]
\end{corollary}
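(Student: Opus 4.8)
The plan is to pin down both special values $L(f^\sharp,1)$ and $L(f^\sharp,2)$ by combining \Cref{lem:K-Lvalue} with the algebraic $P$-relations collected in \Cref{prop: ALids-m=8}, and then to extract the Beta-function expression from the closed form of $P(1/6,5/6)$ recorded in that same proposition. As a preliminary remark, I would note that the $_3P_2$-symbol of \eqref{eq:inductive} coincides with the period $P(r,s)$ of \eqref{eq:P(r,s)}: applying \eqref{eq:FtoP} with $r_1=r_2=\frac12$, $q_2=1$, $q_3=s$ gives $\pPq32{\frac12&\frac12&r}{&1&s}{1}=\pi B(r,s-r)\pFq32{\frac12&\frac12&r}{&1&s}{1}=P(r,s)$.

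First I would specialize \Cref{lem:K-Lvalue} to the two pairs $(r,s)\in\{(1/3,2/3),(2/3,4/3)\}$ entering the decomposition $f^\sharp(\tau)=\BK_2(1/3,2/3)(6\tau)-2\,\BK_2(2/3,4/3)(6\tau)$ given in the statement (cf.\ \Cref{tab:G2-eigenforms}). Here $N(1/3)=N(2/3)=6$ by \eqref{eq:N(r)}, and since $s=2r$ in both cases one has $\BK_2(s-r,s)=\BK_2(r,s)$ by \eqref{eq:E-E*}, so \Cref{lem:K-Lvalue} becomes
\[
  P(r,2r)=2^{4r-1}\cdot 6\pi\cdot L\bigl(\BK_2(r,2r)(6\tau),1\bigr)=2^{4r-2}\cdot 36\cdot L\bigl(\BK_2(r,2r)(6\tau),2\bigr)
\]
for $r\in\{1/3,2/3\}$. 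This records $L(\BK_2(1/3,2/3)(6\tau),k)$ and $L(\BK_2(2/3,4/3)(6\tau),k)$ in terms of $P(1/3,2/3)$ and $P(2/3,4/3)$ for $k=1,2$.

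Next I would take the linear combination defining $f^\sharp$. After cancelling powers of $2$ this yields, for $k=1$ and $k=2$ respectively, the identity
\[
  6\pi\cdot 2^{2/3}\,L(f^\sharp,1)=2^{1/3}P(1/3,2/3)-P(2/3,4/3)=18\cdot 2^{2/3}\,L(f^\sharp,2).
\]
Now the relation $2^{1/3}P(1/3,2/3)=(2+\sqrt3)\,P(2/3,4/3)$ of \Cref{prop: ALids-m=8} turns the middle term into $(1+\sqrt3)\,P(2/3,4/3)$, and the line $P(1/6,5/6)=(1+\sqrt3)\,P(2/3,4/3)=2^{-5/6}3^{-1/4}B(1/4,1/4)^2$ of the same proposition supplies $\frac{B(1/4,1/4)^2}{2^{5/6}3^{1/4}}$, completing the chain of equalities in the corollary.

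The whole argument is bookkeeping once \Cref{lem:K-Lvalue} and \Cref{prop: ALids-m=8} are granted; the only point requiring care is tracking the exponents of $2$ from the three sources---the factors $2^{4r-1}$, $2^{4r-2}$ in \Cref{lem:K-Lvalue}, the coefficient $2$ in $f^\sharp$, and the $2^{1/3}$ in the $P$-relation---together with the verification $N(1/3)=N(2/3)=6$ so that the level-$6$ forms in the decomposition are exactly those produced by \Cref{lem:K-Lvalue}. I do not anticipate any genuine obstacle, since the substantive content was already established in \Cref{lem:K-Lvalue} and \Cref{prop: ALids-m=8}.
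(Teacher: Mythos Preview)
Your proposal is correct and follows exactly the approach the paper indicates, namely deducing the corollary from \Cref{lem:K-Lvalue} and \Cref{prop: ALids-m=8}; you have simply supplied the bookkeeping (tracking the $2^{4r-1}$, $2^{4r-2}$ factors and verifying $N(1/3)=N(2/3)=6$) that the paper leaves implicit. The identification $P(r,s)=\pPq32{\frac12&\frac12&r}{&1&s}{1}$ and all the exponent-of-$2$ calculations check out.
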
 
This result follows from \Cref{lem:K-Lvalue} and \Cref{prop: ALids-m=8}.
\begin{corollary}
    The transcendental part of the Petersson inner product of $f_{36.3.d.a}$ or $f_{36.3.d.b}$ is $B(1/4,1/4)^4/\pi^5$. 
\end{corollary}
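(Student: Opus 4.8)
\noindent
The plan is to read off the Petersson norm from the two critical $L$-values of $f^\sharp:=f_{36.3.d.a}$ computed above, via a Rankin--Selberg period relation. Writing $\doteq$ for equality up to a nonzero element of $\overline\Q^{\times}$, the preceding Corollary together with \Cref{lem:K-Lvalue} and \Cref{prop: ALids-m=8} gives $L(f^\sharp,1)\doteq B(1/4,1/4)^{2}/\pi$ and $L(f^\sharp,2)\doteq B(1/4,1/4)^{2}$, so that $L(f^\sharp,1)\,L(f^\sharp,2)\doteq B(1/4,1/4)^{4}/\pi$. Since $f^\sharp$ is a weight-$3$ newform of level $36$ with complex multiplication by $\Q(\sqrt{-1})$, Shimura's period theory \cite{Shimura-special-zeta} attaches to it period symbols $u_{\pm}(f^\sharp)\in\C^{\times}/\overline\Q^{\times}$ with $L(f^\sharp,j)\doteq(2\pi i)^{j}u_{(-1)^{j}}(f^\sharp)$ for $j\in\{1,2\}$; hence $u_{-}(f^\sharp)\doteq u_{+}(f^\sharp)\doteq B(1/4,1/4)^{2}/\pi^{2}$.

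The second ingredient is the relation between $\langle f^\sharp,f^\sharp\rangle$ and the symmetric square. I would unfold the Rankin--Selberg integral $\int_{\Gamma_{0}(36)\backslash\FH}|f^\sharp(z)|^{2}E(z,s)\,y^{3}\,\tfrac{dx\,dy}{y^{2}}$, whose value at the relevant point is, up to elementary factors, the completed value $\Lambda(\mathrm{Sym}^{2}f^\sharp,3)$, and combine this with Deligne's computation of the period of $\mathrm{Sym}^{2}$ of the weight-$3$ CM motive --- whose Deligne period is $\doteq u_{+}(f^\sharp)u_{-}(f^\sharp)$ times the period $\doteq\pi^{2}$ of the one-dimensional determinant motive. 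This produces an identity $\langle f^\sharp,f^\sharp\rangle\doteq\pi^{c}\,u_{+}(f^\sharp)u_{-}(f^\sharp)\doteq\pi^{\,c-3}\,L(f^\sharp,1)L(f^\sharp,2)$ in which $c$ is a universal constant depending only on the weight; the Rankin--Selberg normalization gives $c=-1$ in weight $3$, whence $\langle f^\sharp,f^\sharp\rangle\doteq B(1/4,1/4)^{4}/\pi^{5}$. Finally, $f_{36.3.d.b}=f_{36.3.d.a}\otimes\chi_{-3}$ is a quadratic twist, and twisting by a Dirichlet character alters the Petersson norm only by an element of $\overline\Q^{\times}$ (a ratio of levels times a Gauss sum squared), so $f_{36.3.d.b}$ has the same transcendental part.

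The main obstacle is making the exponent $c$ rigorous, i.e.\ tracking every $\Gamma$- and $\pi$-factor through the Rankin--Selberg unfolding and the Deligne period of $\mathrm{Sym}^{2}$. A cleaner parallel route, which I would carry out as a check, uses the CM factorization $L(\mathrm{Sym}^{2}f^\sharp,s)=L(\chi_{-4},s)\,L(\psi^{2},s)$, with $\psi$ the Hecke character of $\Q(i)$ of infinity type $z^{2}$ attached to $f^\sharp$: one combines the elementary evaluation $L(\chi_{-4},3)=\pi^{3}/32$ with Damerell's theorem, which evaluates $L(\psi^{2},3)$ as an algebraic multiple of $\Gamma(1/4)^{8}/\pi^{\,b}$ for an explicit $b$, and feeds the product into Hida's formula $\langle f^\sharp,f^\sharp\rangle\doteq\pi^{-a}L(\mathrm{Sym}^{2}f^\sharp,3)$ with $a$ the classical Rankin--Selberg exponent. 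Since $\Gamma(1/4)^{8}=\pi^{2}B(1/4,1/4)^{4}$, consistency with $B(1/4,1/4)^{4}/\pi^{5}$ requires $a+b=10$, and the classical values of $a$ and $b$ indeed satisfy this, making the $\pi$-bookkeeping fully explicit.
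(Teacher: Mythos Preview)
Your approach is essentially the same as the paper's: compute both critical $L$-values from the preceding corollary, extract the periods $u_{\pm}(f^\sharp)$ via Shimura's theory, and then relate $\langle f^\sharp,f^\sharp\rangle$ to $u_{+}u_{-}$; the twist argument for $f_{36.3.d.b}$ is identical. The difference is only in packaging. The paper's one-line proof simply invokes \cite[Theorem~4]{Shimura-special-zeta}, which already contains the relation $\langle f,f\rangle\doteq\pi^{c}\,u_{+}(f)u_{-}(f)$ with the exponent pinned down---so the ``main obstacle'' you flag (tracking the $\pi$-power $c$ through Rankin--Selberg) is exactly what Shimura's theorem handles for you, and there is no need to unfold the Rankin--Selberg integral or to run the alternative CM/Damerell route by hand. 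Your elaboration is correct but redundant: once you cite Shimura's Theorem~4 together with the values $L(f^\sharp,1)\doteq B(1/4,1/4)^{2}/\pi$ and $L(f^\sharp,2)\doteq B(1/4,1/4)^{2}$ from \Cref{prop: ALids-m=8}, the conclusion is immediate.
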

This result follows from the work of  Shimura \cite[Theorem 4]{Shimura-special-zeta},  \Cref{prop: ALids-m=8} and the fact  $f_{36.3.d.b}=f_{36.3.d.a}\otimes \chi_{-3}$.
\begin{corollary}
    Let $m=3$ and $f^\sharp$ be the newform given in \Cref{tab:G2-eigenforms}.  Then 
    \[
       L(f^\sharp, 1) = \frac1{16\pi} \left((2+\sqrt 2)(1+i)P(1/4,5/8)+ \sqrt 2(1-i) P(1/4,9/8)\right),
    \]
    \begin{equation*}
        \begin{split} 
            L(f^\sharp, 2) &=\frac 1{64} \left((2+\sqrt 2)P(1/4,5/8)+\sqrt{2}iP(1/4,9/8)\right) \\
            &= \frac{1+i}8\pi \cdot L(f^\sharp\otimes\chi_{-2},1) =\frac{(i-1)(1+\sqrt2)}8\pi \cdot L(f^\sharp\otimes\chi_{-1},1).
        \end{split}
    \end{equation*}
\end{corollary}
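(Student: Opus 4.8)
The plan is to write $L(f^\sharp,1)$ and $L(f^\sharp,2)$ as explicit $\Q(\sqrt2,i)$-linear combinations of the two period values $P(1/4,5/8)$ and $P(1/4,9/8)$, and then to deduce the functional equations by running the same computation for the twisted forms. Take $f^\sharp=f_{128.3.d.c}=\BK_2(1/8,7/8)(16\tau)+2\sqrt2\,\BK_2(3/8,5/8)(16\tau)+4i\,\BK_2(5/8,11/8)(16\tau)+8\sqrt2\,i\,\BK_2(7/8,9/8)(16\tau)$ as in \Cref{tab:G2-eigenforms}. By \eqref{eq:FtoP} the ${}_3P_2$-value in \Cref{lem:K-Lvalue} is exactly $P(r,s)$ of \eqref{eq:P(r,s)}, so \Cref{lem:K-Lvalue} with $N=16$ gives $L(\BK_2(r,s)(16\tau),1)=P(r,s)/(2^{4r+3}\pi)$ and $L(\BK_2(r,s)(16\tau),2)=P(s-r,s)/2^{4r+6}$. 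Expanding $L(\cdot,1)$ and $L(\cdot,2)$ linearly over the four summands yields
\[
  \pi L(f^\sharp,1)=\frac{P(1/8,7/8)}{2^{7/2}}+\frac{P(3/8,5/8)}{8}+\frac{i\,P(5/8,11/8)}{2^{7/2}}+\frac{i\,P(7/8,9/8)}{8},
\]
\[
  L(f^\sharp,2)=\frac{P(3/4,7/8)}{2^{13/2}}+\frac{P(1/4,5/8)}{64}+\frac{i\,P(3/4,11/8)}{2^{13/2}}+\frac{i\,P(1/4,9/8)}{64}.
\]

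Next I would reduce the eight period values on the right-hand sides to $P(1/4,5/8)$ and $P(1/4,9/8)$. \Cref{prop: ALids} with $m=3,\ j=1$ gives $\sqrt2\,P(1/4,5/8)=P(1/8,7/8)+P(5/8,11/8)$ and $\sqrt2\,P(1/4,9/8)=P(1/8,7/8)-P(5/8,11/8)$, hence $P(1/8,7/8)=\tfrac{\sqrt2}{2}\big(P(1/4,5/8)+P(1/4,9/8)\big)$ and $P(5/8,11/8)=\tfrac{\sqrt2}{2}\big(P(1/4,5/8)-P(1/4,9/8)\big)$; the same proposition with $m=3,\ j=3$ expresses $P(3/8,5/8)$ and $P(7/8,9/8)$ through $P(3/4,7/8)$ and $P(3/4,11/8)$. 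Then the Kummer specialization \eqref{cor:AAR3.3.5} at $r=3/4$ gives $P(3/4,7/8)=\tfrac{\sqrt2}{2}\tfrac{B(1/8,3/4)^2}{B(3/8,1/2)^2}P(1/4,5/8)$ and $P(3/4,11/8)=\tfrac{\sqrt2}{2}\tfrac{B(5/8,3/4)^2}{B(7/8,1/2)^2}P(1/4,9/8)$; the Legendre duplication formula at $1/8$ and $3/8$ together with Euler reflection and $\sin(\pi/8)\sin(3\pi/8)=\tfrac{\sqrt2}{4}$ evaluate the two Beta-quotients as $2(1+\sqrt2)$ and $2(\sqrt2-1)$, so $P(3/4,7/8)=(2+\sqrt2)P(1/4,5/8)$ and $P(3/4,11/8)=(2-\sqrt2)P(1/4,9/8)$. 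Substituting into the two displays and collecting the coefficients of $P(1/4,5/8)$ and $P(1/4,9/8)$ produces the stated formulas for $L(f^\sharp,1)$ and $L(f^\sharp,2)$.

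For the functional equations I would use that $\chi_{-1}$ and $\chi_{-2}$ are inner twists of $f_{128.3.d.c}$ (recorded in \cite{ENRosen} and \cite{LMFDB}), with associated automorphisms of $\Q(\sqrt2,i)$ being $\sqrt2\mapsto-\sqrt2$ and $i\mapsto-i$ respectively. Since each $\BK_2(r,s)(16\tau)$ lies in $\Q[[q]]$, the form $f^\sharp\otimes\chi$ is the Galois conjugate of $f^\sharp$ obtained by applying the relevant automorphism to the scalar coefficients $1,\,2\sqrt2,\,4i,\,8\sqrt2\,i$ in the $\BK_2$-expansion; rerunning the computation of the first paragraph verbatim then expresses $L(f^\sharp\otimes\chi_{-1},1)$ and $L(f^\sharp\otimes\chi_{-2},1)$ as explicit $\Q(\sqrt2,i)$-combinations of $P(1/4,5/8)$ and $P(1/4,9/8)$. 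Comparing these with the formula for $L(f^\sharp,2)$ gives $L(f^\sharp,2)=\tfrac{1+i}{8}\pi\,L(f^\sharp\otimes\chi_{-2},1)=\tfrac{(i-1)(1+\sqrt2)}{8}\pi\,L(f^\sharp\otimes\chi_{-1},1)$; the theorem of Shimura \cite{Shimura-special-zeta} quoted above guarantees a priori that each ratio is an algebraic multiple of $\g(\chi)\pi i$, so the explicit computation only needs to pin down the constant.

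The main obstacle is the pair of Gamma-quotient evaluations $B(1/8,3/4)^2/B(3/8,1/2)^2=2(1+\sqrt2)$ and $B(5/8,3/4)^2/B(7/8,1/2)^2=2(\sqrt2-1)$: these are the only non-formal inputs, and they require combining the duplication formula at $1/8,3/8$ with reflection at $1/8,3/8$ and the half-angle value of sine. A secondary point needing care is the claim that twisting by $\chi_{-1},\chi_{-2}$ acts on the $\BK_2$-decomposition simply by conjugating the scalar coefficients; this uses both that these are genuine inner twists of $f_{128.3.d.c}$ and that the relevant $\BK_2(r,s)(16\tau)$ have rational Fourier expansions.
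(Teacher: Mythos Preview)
Your proposal is correct and follows essentially the same route as the paper's proof. Both arguments express $L(f^\sharp,1)$ and $L(f^\sharp,2)$ via \Cref{lem:K-Lvalue} as combinations of the eight $P$-values, reduce these to $P(1/4,5/8)$ and $P(1/4,9/8)$ using \Cref{prop: ALids} (with $m=3$, $j=1,3$) together with the Kummer specialization \eqref{cor:AAR3.3.5} at $r=3/4$, and then obtain the twisted $L$-values by flipping the appropriate signs in the $\BK_2$-expansion. Your added detail on the Beta-quotient evaluations and on why the inner twists act by Galois conjugation of the scalar coefficients is justification the paper leaves implicit.
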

\begin{proof}
    According to  \Cref{cor:AAR3.3.5} and \Cref{prop: ALids}, we have 
    \[
        P(3/4,7/8)=(2+\sqrt 2)P(1/4,5/8), \quad P(3/4,11/8)=(2-\sqrt 2)P(1/4,9/8),
    \]
    \begin{equation*}
        \begin{split} 
            \sqrt{2} P(1/8,7/8)&=P(1/4,5/8)+P(1/4,9/8), \\
            \sqrt{2} P(5/8,11/8)&=P(1/4,5/8)-P(1/4,9/8),\\
            2 P(3/8,5/8)&=(\sqrt 2+1)P(1/4,5/8)+(\sqrt 2-1)P(1/4,9/8),\\
            2 P(7/8,9/8)&=(\sqrt 2+1)P(1/4,5/8)-(\sqrt 2-1)P(1/4,9/8).
        \end{split}
    \end{equation*}
    Hence by the integral formula (\Cref{lem:K-Lvalue}) and above identities, 
    \begin{equation*}
        \begin{split} 
            {16\pi}L(f^\sharp, 1) &= \sqrt 2\left(P(1/8,7/8)+\sqrt 2 P(3/8,5/8)+i P(5/8,11/8)+\sqrt 2iP(7/8,9/8) \right)\\
            &= (2+\sqrt 2)(1+i)P(1/4,5/8)+ \sqrt 2(1-i) P(1/4,9/8), 
        \end{split}
    \end{equation*}
    and
    \begin{equation*}
        \begin{split} 
            16^2L(f^\sharp, 2) &=2^{3/2} P(3/4,7/8)+4P(1/4,5/8)+2\sqrt 2i P(3/4,11/8)+4iP(1/4,9/8) \\
           &= 4(2+\sqrt 2)P(1/4,5/8)+4\sqrt{2}iP(1/4,9/8). 
        \end{split}
    \end{equation*}
    On the other hand, one can simplify $L(f^\sharp\otimes \chi_{-1}, 1)$ and $L(f^\sharp\otimes \chi_{-2}, 1)$ as follows.
    \begin{equation*}
        \begin{split} 
            \frac{16\pi}{\sqrt 2} L(f^\sharp\otimes \chi_{-1}, 1)
            &=P(1/8,7/8)+iP(5/8,11/8)-\sqrt 2 P(3/8,5/8)-\sqrt 2iP(7/8,9/8)\\
            &= -(1+i)P(1/4,5/8)+(\sqrt{2}-1)(1-i)P(1/4,9/8)\\
            &=\frac{(1-\sqrt 2)(1+i)}{\sqrt 2} \left((2+\sqrt 2)P(1/4,5/8)+\sqrt{2}iP(1/4,9/8\right)\\
            &=\frac{(1-\sqrt 2)(1+i)}{\sqrt 2}  64\cdot L(f^\sharp, 2);
        \end{split}
    \end{equation*}
    \begin{equation*}
        \begin{split} 
            \frac{16\pi}{\sqrt 2} L(f^\sharp\otimes \chi_{-2}, 1) &=P(1/8,7/8)-iP(5/8,11/8)+\sqrt 2 P(3/8,5/8)-\sqrt 2iP(7/8,9/8)\\
            &= (\sqrt 2+1)(1-i)P(1/4,5/8)+i(1-i)P(1/4,9/8)\\
            &=\frac{(1-i)}{\sqrt 2}  64\cdot L(f^\sharp, 2),
        \end{split}
    \end{equation*}
    which give the identities in the corollary.
\end{proof}
Other functional equations listed in the table can be deduced in a similar manner; for interested readers, please see \cite{ENRosen}.  
%
%
\begin{table}[ht]
    \centering
    \begin{tblr}{
        colspec={|Q[c,m]|X[l,20]|},
        hlines,
        vlines,
        rowsep={4pt},
        hline{1,2,Z}={1pt},
        vline{1,2,Z}={1pt},
    }
    $m$ & $L(f^\sharp,1)$ \\
    $1$  & {$\displaystyle \pi L(f_{1152.3.b.i}(\tau),1) =$ \\ $ \displaystyle \frac{1}{192 \cdot 2^{\frac 16}} \bigg[ 8 P\left(1/24\right) - 4 \beta_{5}P \left(7/24\right) -2 \beta_{2} P\left(13/24\right) - \beta_{7} P\left(19/24 \right)\bigg]$ \\
    $\displaystyle -\frac{1}{192 \cdot 2^{\frac{5}{6}} } \bigg[8 \beta_{1}P\left(5/24\right) + 4 \beta_{3} P\left(11/24\right) + 2 \beta_{4}P \left(17/24\right) + \beta_{6} P\left(23/24\right) \bigg]$} \\
    $2$   &   {$\pi L(f_{288.3.g.a}(\tau),1)=\displaystyle \frac{1}{12 \cdot 2^{\frac{1}{3}}} \left[P \left(1/12\right) + i \cdot P \left(7/12\right) \right] - \frac{1}{6 \cdot 2^{\frac{2}{3}}} \left[P \left(5/12\right) + i \cdot P \left(11/12\right) \right]$}  \\ 
    $3$ & {$\pi L(f_{128.3.d.c}(\tau),1) = \displaystyle\frac{1}{8 \sqrt{2}} \left[ P\left(1/8 \right) + i \cdot P \left(5/8\right) \right]+\frac{1}{8} \left[P \left(3/8\right) + i \cdot P \left(7/8\right) \right]$}\\ 
    $6$ &  $\pi L(f_{32.3.c.a}(\tau),1) = \frac{1}{8} \left( \, P\left(1/4\right) +i \cdot \, P\left(3/4\right)\right)$ \\
    4, 8 &  $\displaystyle \pi L(f_{36.3.d.a},1)= \frac{\pi}{\sqrt{3}} L(f_{36.3.d.b}, 1) =\frac{B(1/4,1/4)^2}{  2^{\frac52} 3^{\frac54}}$ \\
    \end{tblr}
    \caption{G2 $L$-values in terms of $P(m/24) = P\left(m/24, \frac{24-m}{24} + \lfloor \frac{m}{12}\rfloor\right)$ \\ In the above, {\footnotesize $
    \beta_{1}=2\sqrt{-7},\, \beta_2=4\sqrt{-1},\, \beta_3=4\sqrt2,\, \beta_4=8\sqrt 7, \, \beta_{5}=2\sqrt{-14},\, \beta_6=-16\sqrt{-2},\, \beta_7=8\sqrt{14}.
    $}}
    \label{tab:G2-L-values}
\end{table}

\begin{table}[ht]
    \centering
    \begin{tblr}{
        colspec={|Q[c,m]|Q[l,m]|},
        hlines,
        vlines,
        rowsep={3pt},
        hline{1,2,Z}={1pt},
        vline{1,2,Z}={1pt},
    }
    $m$& \SetCell{c} $L(f^\sharp,2)$ \\
    $1$ &$\displaystyle L(f^\sharp,2)=-\frac{1+i}{12}\pi \cdot L(f^\sharp\otimes\chi_{-2},1)$ \\
    $2$ &$\displaystyle L(f_{288.3.g.a},2)=-\frac{1+i}{12}\pi \cdot L(f_{288.3.g.a}\otimes\chi_{-1},1)=\frac1{\sqrt{3}}\frac{1-i}{12}\pi \cdot L(f_{288.3.g.c},1)$\\
    $3$ &$\displaystyle L(f^\sharp,2)=\frac{1+i}8\pi \cdot L(f^\sharp\otimes\chi_{-2},1) =\frac{(i-1)(1+\sqrt2)}8\pi \cdot L(f^\sharp\otimes\chi_{-1},1)$\\
    $6$ &$\displaystyle L(f^\sharp,2)=\frac{1+i}4\pi \cdot L(f^\sharp\otimes\chi_{-1},1) $\\
    $4, 8$ & $ L(f^\sharp,2)=\frac{\pi}3\cdot L(f^\sharp,1)$\\
    \end{tblr}
    \captionsetup{justification=centering}
    \caption{G2 $L$-values at 2.}
    \label{tab:G2-L-values-2}
\end{table}
\newpage
\bibliographystyle{plain}
\bibliography{ref}

\address{Department of Mathematics, Louisiana State University, Baton Rouge, LA 70803, USA}

\email{\href{mailto:allenm3@lsu.edu}{allenm3@lsu.edu}}, {\url{https://michaelgallen.com/}}

\email{\href{mailto:bgrove2@lsu.edu}{bgrove2@lsu.edu}}, {\url{https://briandgrove.wordpress.com/}}

\email{\href{mailto:llong@lsu.edu}{llong@lsu.edu}}, {\url{https://www.math.lsu.edu/~llong/}}

\email{\href{mailto:ftu@lsu.edu}{ftu@lsu.edu}}, {\url{https://sites.google.com/view/ft-tu/}}

\end{document}